\newtheorem{theorem}{Theorem}[section]
\newtheorem{corollary}{Corollary}[section]
\newtheorem{definition}{Definition}[section]
\newtheorem{example}{Example}[section]
\newtheorem{exercise}{Exercise}[section]
\newtheorem{lemma}{Lemma}[section]
\newtheorem{problem}{Problem}[section]
\newtheorem{proposition}{Proposition}[section]
\newtheorem{remark}{Remark}[section]
\newcommand{\bthm}{\begin{theorem}}
	\newcommand{\ethm}{\end{theorem}}
\newcommand{\blem}{\begin{lemma}}
	\newcommand{\elem}{\end{lemma}}
\newcommand{\bex}{\begin{example}}
	\newcommand{\eex}{\end{example}}
\newcommand{\beg}{\begin{exercise}}
	\newcommand{\eeg}{\end{exercise}}
\newcommand{\bprop}{\begin{proposition}}
	\newcommand{\eprop}{\end{proposition}}
\newcommand{\bplm}{\begin{problem}}
	\newcommand{\eplm}{\end{problem}}
\newcommand{\bmrk}{\begin{remark}}
	\newcommand{\emrk}{\end{remark}}
\newcommand{\bdfn}{\begin{definition}}
	\newcommand{\edfn}{\end{definition}}
\newcommand{\bcor}{\begin{corollary}}
	\newcommand{\ecor}{\end{corollary}}
\newcommand{\beq}{\begin{equation}}
	\newcommand{\eeq}{\end{equation}}
\newcommand{\beqm}{\begin{equation*}}
	\newcommand{\eeqm}{\end{equation*}}
\newcommand{\beqn}{\begin{eqnarray}}
	\newcommand{\eeqn}{\end{eqnarray}}
\newcommand{\beqnm}{\begin{eqnarray*}}
	\newcommand{\eeqnm}{\end{eqnarray*}}
\newcommand{\bs}{\begin{subequations}}
	\newcommand{\es}{\end{subequations}}
\newcommand{\bei}{\begin{itemize}}
	\newcommand{\eei}{\end{itemize}}
\newcommand{\bed}{\begin{description}}
	\newcommand{\eed}{\end{description}}
\newcommand{\bee}{\begin{enumerate}}
	\newcommand{\eee}{\end{enumerate}}
\newcommand{\bey}{\begin{array}}
	\newcommand{\eey}{\end{array}}
\newcommand{\bec}{\begin{center}}
	\newcommand{\eec}{\end{center}}
\newcommand{\A}{\mathcal{A}}
\newcommand{\B}{\mathcal{B}}
\newcommand{\C}{\mathcal{C}}
\newcommand{\D}{\mathcal{D}}
\newcommand{\G}{\mathcal{G}}
\begin{document}

\begin{frontmatter}
\title{A tensor phase theory with applications in multilinear control\thanksref{footnoteinfo}} 

\thanks[footnoteinfo]{This paper was not presented at any IFAC 
meeting. Corresponding author: Guofeng Zhang. Tel. +852 2766-6936, Fax +852 2764-4382}

\author[Chengdong Liu]{Chengdong Liu}\ead{cdliu24@m.fudan.edu.cn},    
\author[Yimin Wei]{Yimin Wei}\ead{ymwei@fudan.edu.cn},               
\author[polyu,polyuc]{Guofeng Zhang}\ead{guofeng.zhang@polyu.edu.hk}  

\address[Chengdong Liu]{School of Mathematical Sciences, Fudan University, Shanghai, P. R. of China}  
\address[Yimin Wei]{School of Mathematical Sciences and  Key Laboratory of Mathematics for Nonlinear Sciences, Fudan University, Shanghai, P. R. of China}             

\address[polyu]{Department of Applied Mathematics, The Hong Kong Polytechnic University, Hung Hom, Kowloon, Hong Kong SAR, China}
\address[polyuc]{Hong Kong Polytechnic University Shenzhen Research Institute, Shenzhen 518057, China}

\begin{keyword}                           
Tensor; Einstein product;  numerical range; phase; small phase theorem; multilinear control systems               
\end{keyword}                             

\begin{abstract}                          
The purpose of this paper is to initiate a phase  theory for tensors under the Einstein product, and explore its applications in multilinear control systems. Firstly, the sectorial tensor decomposition for sectorial tensors is derived, which allows us to define phases for sectorial tensors. A numerical procedure for computing phases of a sectorial tensor is also proposed. Secondly, the maximin and minimax expressions for tensor phases are given,  which are used to quantify how close the phases of a sectorial tensor are to those of its compressions. Thirdly, the compound spectrum,  compound numerical ranges and compound angular numerical ranges  of two sectorial tensors $\A,\B$ are defined and characterized in terms of the  compound numerical ranges and compound angular numerical ranges of the sectorial tensors $\A,\B$. Fourthly,  it is shown that the angles of eigenvalues of the product of two sectorial tensors are upper bounded by the sum of their individual phases. Finally, based on  the tensor phase theory developed above, a tensor version of  the small phase theorem  is  presented, which can be regarded as a natural generalization of the  matrix case, recently proposed in Ref. \cite{chen2024phase}. The results offer powerful new tools for the stability and robustness analysis of  multilinear feedback control systems.
\end{abstract}

\end{frontmatter}

\section{Introduction}

While vectors and matrices are the cornerstones of linear algebra for modeling linear relations, they are often insufficient for the complexities of modern science and engineering. Many contemporary data structures and system interactions exhibit high-dimensional, multi-way characteristics that are inherently multilinear. Tensors, also called hypermatrices, as the natural higher-order generalization of vectors and matrices, have emerged as the ideal mathematical framework for representing such large-scale, complex data and for modeling these multilinear interactions \cite{kolda2009tensor, acichocki2009nonnegativematrixandtensor,qi2017tensor,qi2018tensor,chen2024tensor}. Unlike matrices, which force the flattening of high-dimensional data and risk a loss of structural information, tensors natively preserve the structure of data and dynamical systems, thereby more effectively capturing the complex relationships within the data and among the variables of the dynamical  system \cite{hackbusch2012tensor, comon2014tensor,chen2024tensor,DPD+25}.

Leveraging this expressive power, tensor theory and methods have been widely applied across a diverse range of fields including social and biological network analysis \cite{dotson2022deciphering,DPD+25}, signal and image processing \cite{acichocki2009nonnegativematrixandtensor, comon2014tensor}, quantum information and computation \cite{orus2019tensor,HQZ16,QZN18,WGZL21}, quantum control \cite{Z14,Z17,zhang2022linear}, scientific computing \cite{lu2013multilinear,nie2023moment}, and systems and control theory \cite{ chen2021multilinear,chen2024tensor,wang2025algebraic}. A prominent example is the recent extension of the classical Lotka-Volterra model to capture high-order species interactions, where tensors directly govern the system's evolutionary dynamics and stability \cite{letten2019mechanistic, cui2023species,cui2024discrete,cui2025metzler}. This trend is particularly evident in control theory, underscored by the recent establishment of a tensor version of the celebrated small gain theorem \cite{wang2025algebraic}, which highlights the growing importance of tensor analysis in multilinear control systems \cite{chen2021multilinear,chen2024tensor,wang2025algebraic}.

In matrix theory, the numerical range (also called the field of values) and the numerical radius have been cornerstones of mathematical and engineering analysis \cite{r1,bonsall1973numerical, halmos2012hilbert}. The numerical range of a matrix $ A \in \mathbb{C}^{n \times n} $, defined as $ W(A) = \{ x^*Ax : x \in \mathbb{C}^n, \|x\|=1 \} $, provides critical geometric insight into its eigenvalue distribution, system stability, and operator behavior \cite{roger1994topics, axelsson1994numerical}. The numerical radius, $ w(A) = \sup\{ |\lambda| : \lambda \in W(A) \} $, quantifies the operator's maximum energy amplification. Notably, in iterative algorithms, the numerical radius is often a more reliable predictor of convergence speed than the spectral radius, as it captures the operator's transient behavior, not just its asymptotic properties \cite{axelsson1994numerical, eiermann1993fields}. A significant recent development is the emergence of a novel ``phase theory'' for matrices and linear control systems, built upon the concepts of the numerical range and sectorial matrices \cite{chen2019phase, MCQ22}. This theory has culminated in a ``small phase theorem'' that provides necessary and sufficient conditions for the stability of negative feedback systems, offering a  new perspective based on ``phase'' that complements the classical ``gain'' (norm) analysis \cite{SZQ23, chen2024phase}.

Inspired by the broad utility of the matrix numerical range, researchers have begun to generalize these concepts to the tensor setting. Pioneering work by Ke et al. \cite{ke2016numerical} introduced the notion of the tensor numerical range, demonstrating that this generalization preserves many essential properties of its matrix counterpart and opens new avenues in  multilinear algebra.  Numerical range and numerical radius for even-order square tensors  under the Einstein product are introduced  in \cite{r1},  where the author proved that the numerical range is a convex set.  Subsequent studies have explored the tensor numerical radius to understand the boundedness of multilinear operators \cite{MR3172255}. However, existing research has largely focused on generalizing ``gain'' aspects, such as numerical radii \cite{r1}. A systematic \textbf{tensor phase theory}---capable of characterizing the ``directional'' or ``phase'' properties of multilinear operators---remains a largely unexplored frontier. This gap limits our understanding of the intrinsic geometric properties of multilinear systems and hinders the application of powerful tools, like a small phase theorem, to multilinear control systems.

This paper aims to fill this gap by developing a phase theory for tensors, thereby providing  a new framework for analyzing multilinear control systems. Our work is built upon a solid mathematical foundation, including a deep understanding of the tensor numerical range \cite{ke2016numerical, braman2010third,r1,r7}. The main contributions of this paper are summarized as follows:

\begin{enumerate}
    \item \textbf{Definition of Tensor Phase.} By means of the notion of the numerical range, we define sectorial tensors in Definition \ref{def:sectorial}. Then we establish the \textbf{sectorial tensor decomposition theorem} for sectorial tensors  (Theorem \ref{th2_3}), a fundamental result that enables the formal definition of tensor phases; see Definition \ref{def:phases}. A procedure for computing phases of sectorial tensors is also given in Algorithm 1.

    \item \textbf{Characterization of Phase Properties.} We prove a minimax and maximin result for the phases of a sectorial tensor in Lemma \ref{th3_1}. Then we  define compressions of tensors in Definition \ref{def:compression} and study how close they are to the original sectorial tensor in terms of their phases; see 
    Theorems \ref{th3_2} and \ref{thm:aug1_temp}.  
Compound spectrum of even-order square tensors is defined in Definition \ref{def:compound spectrum}, and characterized by  numerical ranges and angular numerical ranges  of tensors in  Theorems \ref{th4_2} and  \ref{th4_3}. These properties are used to study phases of products and sums of sectorial tensors in Theorems \ref{th5_1} and \ref{th6_1}. In particular, in Theorem \ref{th5_1},  an upper bound of eigenvalues of a product of two sectorial tensors is given in terms of tensor phases. Rank robustness of sectorial tensors is investigated in Theorem \ref{th7_1}. These results mirror and extend classical results from matrix analysis to the multilinear case.

    \item \textbf{Establishment of a Small Phase Theorem for Tensors.} As a culminating application, we prove a \textbf{small phase theorem} for multilinear feedback systems; see Theorem \ref{thm:small_phase_theorem}. This theorem provides a new stability condition that serves as a phase counterpart to the small gain theorem \cite{wang2025algebraic}, offering a powerful tool for analyzing the stability and robustness of multilinear control systems.  This result effectively completes a dual set of tools---gain and phase---for multilinear control system analysis.
\end{enumerate}

The rest of this paper is organized as follows. Section~\ref{Preliminary} introduces the fundamental notation and some preliminary results of tensors defined via the Einstein product. Section~\ref{Characterization} presents the sectorial tensor decomposition and formally defines tensor phases. Section~\ref{Phases of compression} studies the maximin and
minimax properties of tensor phases and their behavior under compression. Section~\ref{Compound numerical ranges} explores the relationship between the compound spectrum and  the compound numerical ranges. Section~\ref{Phases of tensor product and sum} analyzes the phase bounds for tensor products and sums. Section \ref{Rank robustness against perturbations} studies rank robustness of product of tensors in terms of tensor phases. Section~\ref{Small Phase Theorem} presents and proves the tensor version of the small phase theorem. Section \ref{sec:quasi-semi} includes results for quasi-sectorial and semi-sectorial tensors.  Finally, Section~\ref{conclusion} concludes the paper and discusses future research directions.
 
\section{Preliminaries}\label{Preliminary}

In this section, some notions of tensors are collected. Throughout this paper, $\imath=\sqrt{-1}$ denotes the imaginary unit. $\mathbb{R}$ is the field of real numbers,  $\mathbb{C}$ is the field of complex numbers. Calligraphic letters $\mathcal{A}, \mathcal{B},\cdots$ are used to represent tensors. For a tensor $\mathcal{A} \in \mathbb{C}^{(I_{1} \times \cdots \times I_{M})\times (K_{1} \times \cdots \times K_{N})}$, its order is $M+N$,  and its dimensions are separated into two parts: $(I_{1},\cdots, I_{M})$ and $(K_{1}, \cdots, K_{N})$. Specifically, the dimension of its $i$th row is $I_i$ and that of its $k$th column is $K_k$. In particular, if $N=M$ and $I_{1} =J_{1},\ldots, I_{N} =J_{N}$, the tensor is called an even-order square tensor.  By  writing $\mathcal{X} \in \mathbb{C}^{I_{1} \times \cdots \times I_{N}}$ we mean either $\mathcal{X} \in \mathbb{C}^{(I_{1} \times \cdots \times I_{N})\times(1)}$ (namely, no columns) or $\mathcal{X} \in \mathbb{C}^{(1)\times(I_{1} \times \cdots \times I_{N})}$ (namely, no rows), and which one of them is used should be clear from the context.  For convenience, denote $|\textbf{I}|=\prod_{n=1}^{N}I_{n}$, and $[n]= \{1,2,\ldots,n\}$. A nonzero scalar $a\in \mathbb{C}$ can be represented in the polar form as $a = \sigma e^{\imath \phi}$, where $\sigma>0$ is the magnitude and $\phi$ is  the phase (argument). In this paper, we restrict   $\phi \in(-\pi,\pi]$ and denote it by $\angle a $. Next, we introduce the Einstein product for tensors.

\begin{definition} [\cite{einstein1916foundation, HQ18,chen2019multilinear,r12}]
Given two tensors $\mathcal{A} \in \mathbb{C}^{(I_{1} \times \cdots \times I_{M})\times (K_{1} \times \cdots \times K_{N})}$ and $\mathcal{B} \in \mathbb{C}^{(K_{1} \times \cdots \times K_{N})\times (J_{1} \times \cdots \times J_{L})}$, the Einstein product  $\mathcal{A} *_{N} \mathcal{B} \in \mathbb{C}^{(I_{1} \times \cdots \times I_{M})\times (J_{1} \times \cdots \times J_{L})}$ is defined  element-wise via
\begin{equation*}
(\mathcal{A} *_{N}\mathcal{B})_{i_{1}\cdots i_{M}j_{1}\cdots j_{L}} =
\sum_{k_{1},\cdots,k_{N}} a_{i_{1}\cdots i_{M}k_{1}\cdots k_{N}} b_{k_{1}\cdots k_{N}j_{1}\cdots j_{L}}.
\end{equation*}
\end{definition}


The following are some elementary tensor operations, which are natural generalizations of their matrix counterparts.

\begin{definition}
Given a tensor $\mathcal{A} \in \mathbb{C}^{(I_{1} \times \cdots \times I_{N})\times (J_{1} \times \cdots \times J_{M})}$ and a complex number $c$,  we define their scalar multiplication element-wise  as 
\begin{equation*}
(c \mathcal{A})_{i_{1}\cdots i_{N}j_{1}\cdots j_{M}}=c a_{i_{1}\cdots i_{N}j_{1}\cdots j_{M}}.
\end{equation*}
The conjugate transpose of the tensor $\A$  is defined element-wise as
\begin{equation*}
( \mathcal{A}^{H})_{j_{1}\cdots j_{M}i_{1}\cdots i_{N}}= \bar{a}_{i_{1}\cdots i_{N}j_{1}\cdots j_{M}}.
\end{equation*}
A tensor $\A$ is said to be \emph{Hermitian} if $\A = \A^{H}$.
\end{definition}

\begin{definition} \label{def:diagonal}
A tensor $\mathcal{D} \in \mathbb{C}^{(I_{1} \times \cdots \times I_{N})\times (J_{1} \times \cdots \times J_{N})}$ is  a diagonal tensor if $d_{i_{1}\cdots i_{N}j_{1}\cdots j_{N}}=0$, whenever $(i_{1}, \cdots,  i_{N}) \neq (j_{1}, \cdots,  j_{N})$.
\end{definition}
Notice that a diagonal tensor defined above is even-order, but may not be square.

 \begin{definition}
     An even-order square tensor $ \mathcal{I} \in \mathbb{C}^{(I_{1} \times \cdots \times I_{N})\times (I_{1} \times \cdots \times I_{N})}$ is called an identity tensor if it is a diagonal tensor with diagonal entries $\mathcal{I}_{i_{1}\cdots i_{N}i_{1}\cdots i_{N}}=1$.
 \end{definition} 
 
\begin{definition}
Given $\mathcal{A} \in \mathbb{C}^{(I_{1} \times \cdots \times I_{N})\times (I_{1} \times \cdots \times I_{N})}$ an even-order square tensor, if there exists $\mathcal{B} \in \mathbb{C}^{(I_{1} \times \cdots \times I_{N})\times (I_{1} \times \cdots \times I_{N})}$ such that
\begin{equation*}
\mathcal{A} *_{N} \mathcal{B}=\mathcal{B} *_{N} \mathcal{A}=\mathcal{I},
\end{equation*}
then  $\mathcal{B}$ is called the inverse of $\mathcal{A}$, denoted   $\mathcal{A}^{-1}$.
\end{definition}

\begin{definition}[\cite{chen2021multilinear}]
For a square tensor  $\mathcal{A} \in \mathbb{C}^{(I_{1} \times \cdots \times I_{N})\times (I_{1} \times \cdots \times I_{N})}$, if a complex number $\lambda$ and a non-zero tensor $\mathcal{X} \in \mathbb{C}^{I_{1} \times \cdots \times I_{N}}$ satisfy $\mathcal{A} *_{N} \mathcal{X}=\lambda \mathcal{X}$,
then we say that $\lambda$ is an eigenvalue of $\mathcal{A}$, and $\mathcal{X}$ is the corresponding eigentensor.
\end{definition}

\begin{definition}[\cite{r6}]
Given a square tensor $\mathcal{A} \in \mathbb{C}^{(I_{1} \times \cdots \times I_{N})\times (I_{1} \times \cdots \times I_{N})}$  with eigenvalues $\lambda_{1}, \lambda_{2}, \ldots, \lambda_{|\textbf{I}|}$, its  determinant  is defined as $det(\mathcal{A})=\prod_{i=1}^{|\textbf{I}|}\lambda_{i}$. 
\end{definition}

\begin{definition}[\cite{MR3914335}] \label{def:rank}
    The rank of a tensor $\mathcal{A} \in \mathbb{C}^{(I_{1} \times \cdots \times I_{M})\times (K_{1} \times \cdots \times K_{N})}$, denoted  $rank(\mathcal{A})$, is defined to be the number of non-zero eigenvalues of $\mathcal{A}^{H}*_{N} \mathcal{A}$. If $rank(\mathcal{A})=\min\{|\textbf{I}|, |\textbf{K}|\}$,  $\mathcal{A}$ is called a \emph{nonsingular} tensor. Clearly, if $\mathcal{A} $ is square, i.e., $N=M$ and $K_1 = I_1, \ldots, K_M=I_M$, then $rank(\mathcal{A})$ equals the number of the non-zero eigenvalues of $\mathcal{A}$ and hence $\mathcal{A}$ is nonsingular if and only if all its eigenvalues are non-zero. 
\end{definition}

Given tensors $\mathcal{X}, \mathcal{Y} \in \mathbb{C}^{I_{1} \times \cdots \times I_{N}}$, an inner product $\langle  \cdot,\cdot\rangle $ is defined as $\langle \mathcal{X} , \mathcal{Y}\rangle =\mathcal{Y}^{H} *_{N} \mathcal{X}$. The
Frobenius norm induced by this inner product is  $||\mathcal{X}||_{F}=\sqrt{\langle  \mathcal{X} , \mathcal{X}\rangle }$. A tensor $\mathcal{A}$  is called a unit tensor if  $||\mathcal{A}||_{F}=1$.

\begin{definition}
An even-order square tensor $\mathcal{A} \in \mathbb{C}^{(I_{1} \times \cdots \times I_{N})\times (I_{1} \times \cdots \times I_{N})}$ is called  a unitary tensor if $\mathcal{A}^{H}=\mathcal{A}^{-1}$, and  a positive-definite tensor if $\langle  \mathcal{A} *_{N} \mathcal{X} , \mathcal{X} \rangle $ is positive for all $\mathcal{O}\neq\mathcal{X} \in \mathbb{C}^{I_{1} \times \cdots \times I_{N}}$.
\end{definition}

\section{Phase theory of tensors} 
\subsection{Phases of sectorial tensors} \label{Characterization}

In this subsection,  we first introduce the numerical range for  even-order square tensors. After that we present a sectorial tensor decomposition, which allows us to define phases for sectorial tensors. 

\begin{definition}[\cite{r1}]
Given an even-order square tensor $\mathcal{A} \in \mathbb{C}^{(I_{1} \times \cdots \times I_{N})\times (I_{1} \times \cdots \times I_{N})}$, its numerical range, denoted $W(\mathcal{A})$, is defined as
\begin{multline*}
W(\mathcal{A}) = \{ \langle \mathcal{A} *_{N} \mathcal{X}, \mathcal{X} \rangle : \\
\mathcal{X} \text{ is a unit tensor in } \mathbb{C}^{I_{1} \times \cdots \times I_{N}} \}.
\end{multline*}
 The \emph{field angle} of $ \mathcal{A}$, denoted $\delta(\mathcal{A})$, is the angle subtended by the two supporting rays of $ W(\mathcal{A})$ originating from the origin.
\end{definition}

\begin{definition}
Given $\mathcal{A} \in \mathbb{C}^{(I_{1} \times \cdots \times I_{N})\times (I_{1} \times \cdots \times I_{N})}$, its angular numerical range, denoted $W' (\mathcal{A})$, is defined as
\begin{multline*}
W'(\mathcal{A}) = \{ \langle \mathcal{A} *_{N} \mathcal{X}, \mathcal{X} \rangle : \\
\mathcal{X} \text{ is a nonzero tensor in } \mathbb{C}^{I_{1} \times \cdots \times I_{N}} \}.
\end{multline*}
\end{definition}

Clearly, $ W(\mathcal{A}) \subseteq  W' (\mathcal{A})$. But they have the same field angle.

\begin{lemma}[\cite{r1}] \label{convex}
 The numerical range  of an even-order square tensor is convex.
\end{lemma}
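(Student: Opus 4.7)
The plan is to reduce the statement to the classical Toeplitz--Hausdorff theorem for matrix numerical ranges via a reshape (``unfolding'') isomorphism that is standard in the Einstein-product literature. The underlying idea is that even-order square tensors under $*_N$ form an associative algebra that is $*$-isomorphic to a matrix algebra of size $|\textbf{I}|\times|\textbf{I}|$, and this isomorphism is isometric with respect to the Frobenius/Euclidean inner products.

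Concretely, I would fix a bijection $\sigma:[I_{1}]\times\cdots\times[I_{N}]\to[|\textbf{I}|]$ and use it to construct two linear maps: $\Phi:\mathbb{C}^{I_{1}\times\cdots\times I_{N}}\to\mathbb{C}^{|\textbf{I}|}$, relabeling a ``vector-shaped'' tensor as a column vector, and $\Psi:\mathbb{C}^{(I_{1}\times\cdots\times I_{N})\times(I_{1}\times\cdots\times I_{N})}\to\mathbb{C}^{|\textbf{I}|\times|\textbf{I}|}$, relabeling an even-order square tensor as a square matrix. Both are bijective by construction. The heart of the proof is to verify three compatibility properties: (i) $\Phi(\mathcal{A}*_{N}\mathcal{X})=\Psi(\mathcal{A})\,\Phi(\mathcal{X})$; (ii) $\langle\mathcal{X},\mathcal{Y}\rangle=\Phi(\mathcal{Y})^{*}\Phi(\mathcal{X})$; and (iii) $\|\mathcal{X}\|_{F}=\|\Phi(\mathcal{X})\|_{2}$. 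Each follows directly from the definitions, because the Einstein product collapses the $N$ shared multi-indices in exactly the pattern that ordinary matrix multiplication sums a single relabeled index, and the tensor inner product and Frobenius norm were defined precisely to mirror their Euclidean counterparts under coordinate identification.

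From (i)--(iii), unit tensors in $\mathbb{C}^{I_{1}\times\cdots\times I_{N}}$ correspond bijectively to unit vectors in $\mathbb{C}^{|\textbf{I}|}$, and $\langle\mathcal{A}*_{N}\mathcal{X},\mathcal{X}\rangle=\Phi(\mathcal{X})^{*}\Psi(\mathcal{A})\Phi(\mathcal{X})$. Consequently $W(\mathcal{A})=W(\Psi(\mathcal{A}))$ as subsets of $\mathbb{C}$, and invoking the classical Toeplitz--Hausdorff theorem for the matrix $\Psi(\mathcal{A})$ immediately yields the desired convexity.

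The only nontrivial step is the index bookkeeping needed for property (i); however, this is routine multi-index manipulation rather than a conceptual obstacle. A purely intrinsic alternative would mimic the Toeplitz--Hausdorff argument directly: restrict to the two-dimensional complex span of two unit tensors $\mathcal{X}_{1},\mathcal{X}_{2}$, parameterize $\alpha\mathcal{X}_{1}+\beta\mathcal{X}_{2}$ subject to the unit-norm constraint, and show by an elliptical-range calculation that the image in $\mathbb{C}$ is convex. The reshape approach subsumes this, is shorter, and additionally demonstrates that essentially every matrix-level property of $W(\cdot)$ transfers to the tensor setting for free --- a fact that will be convenient when later sections relate $W(\mathcal{A})$ to phases, compressions, and compound numerical ranges.
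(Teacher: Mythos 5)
Your proposal is correct. Note that the paper does not actually prove this lemma; it is quoted from the reference \cite{r1}, where the convexity is established by working directly in the tensor setting (essentially a Toeplitz--Hausdorff-style argument on the span of two unit tensors, the ``intrinsic alternative'' you sketch at the end). Your main route --- reducing to the classical matrix Toeplitz--Hausdorff theorem via the unfolding bijection --- is a clean and legitimate shortcut, and it is fully consistent with the machinery the paper itself introduces later: the isomorphism $\phi$ of Eq.~\eqref{chen2024phase} and Lemma~\ref{lem:isomorphism} has exactly your properties (i)--(iii) (multiplicativity, compatibility with $(\cdot)^{H}$, hence preservation of inner products and unit tensors), so $W(\mathcal{A})=W(\phi(\mathcal{A}))$ and convexity transfers for free. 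The only point worth making explicit in a polished write-up is the verification of your property (i), i.e.\ that the Einstein contraction over the $N$ shared indices matches the single summation index under the chosen bijection $\sigma$ (the paper's $ivec$ map); this is routine but is the load-bearing step. Your observation that the reshape argument simultaneously transfers every unitarily/isometrically invariant property of the matrix numerical range is apt --- the paper exploits precisely this later for phases and sectoriality.
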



Next, we define sectorial tensors as the natural generalization of the matrix case \cite{r2}.
 
\begin{definition} \label{def:sectorial}
An even-order square  tensor  $\mathcal{A}$ is called sectorial if $0 \notin W(\mathcal{A})$.
\end{definition}

According to Lemma \ref{convex},   the numerical range $W(\mathcal{A})$ of a sectorial tensor $\A$ is a  convex set  contained in an open half plane.

If an even-order square tensor $\mathcal{D}$ is diagonal, then its numerical range is of the form
\begin{multline*} \label{W_D}
W(\mathcal{D})= \left\{ \sum_{i_{1},i_{2},\cdots,i_{N}}d_{i_{1}i_{2}\cdots i_{N}i_{1}i_{2}\cdots i_{N}}|x_{i_{1}i_{2}\cdots i_{N}}|^{2}: \right. \\
\left. \sum_{i_{1},i_{2},\cdots,i_{N}}|x_{i_{1}i_{2}\cdots i_{N}}|^{2}=1 \right\}.
\end{multline*}
Define a set $P(\mathcal{D})=\{d_{i_{1}i_{2}\cdots i_{N}i_{1}i_{2}\cdots i_{N}}: \forall i_1\in [I_1], \ldots, \forall i_N\in [I_N] \}$. Due to the arbitrariness of $\mathcal{X}$,  $P(\mathcal{D})$ is actually the set of all eigenvalues of the diagonal tensor $\mathcal{D}$. Clearly, $W(\mathcal{D})= conv
(P(\mathcal{D}))$, where $conv(P(\mathcal{D}))$ represents the convex hull of $P(\mathcal{D})$. Therefore, an even-order square diagonal tensor $\mathcal{D}$ is  sectorial if and only if $P(\mathcal{D})$ is in an open half plane.

Like Hermitian matrices, Hermitian tensors also have   spectral decompositions, as given below. 

\begin{lemma}[\cite{erfanifar2024polar}]\label{th2_1}
Given a Hermitian tensor $\mathcal{A}\in \mathbb{C}^{(I_{1} \times \cdots \times I_{N})\times (I_{1} \times \cdots \times I_{N})}$, there exists a unitary tensor $\mathcal{U}$ of the same size such that
\begin{equation*}
\mathcal{U}^{H} *_{N} \mathcal{A} *_{N} \mathcal{U}= \mathcal{D},
\end{equation*}
where $\mathcal{D}$ is a diagonal tensor containing all eigenvalues of $\mathcal{A}$.
\end{lemma}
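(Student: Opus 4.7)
The plan is to reduce the tensor spectral theorem to the classical matrix spectral theorem via an unfolding isomorphism. Fix any bijection $\sigma:[I_1]\times\cdots\times[I_N]\to[|\textbf{I}|]$ (e.g., lexicographic ordering) and define the unfolding map $\phi:\mathbb{C}^{(I_1\times\cdots\times I_N)\times(I_1\times\cdots\times I_N)}\to\mathbb{C}^{|\textbf{I}|\times|\textbf{I}|}$ by setting $(\phi(\mathcal{A}))_{\sigma(\textbf{i}),\sigma(\textbf{j})} = a_{\textbf{i}\textbf{j}}$. A direct computation from the definition of the Einstein product shows that $\phi$ is a bijective $\mathbb{C}$-linear map satisfying $\phi(\mathcal{A}*_N\mathcal{B}) = \phi(\mathcal{A})\phi(\mathcal{B})$ and $\phi(\mathcal{A}^H) = \phi(\mathcal{A})^*$, so $\phi$ is a $*$-algebra isomorphism onto $\mathbb{C}^{|\textbf{I}|\times|\textbf{I}|}$ endowed with its standard structure.

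Under $\phi$, the tensor-theoretic notions translate cleanly into their matrix counterparts. The identity tensor $\mathcal{I}$ maps to the identity matrix, so $\mathcal{U}$ is unitary iff $\phi(\mathcal{U})$ is unitary; the tensor $\mathcal{A}$ is Hermitian iff $\phi(\mathcal{A})$ is Hermitian; a diagonal tensor in the sense of Definition \ref{def:diagonal} maps to an ordinary diagonal matrix (provided both copies of $[I_1]\times\cdots\times[I_N]$ are indexed by the same $\sigma$); and, after vectorizing $\mathcal{X}$ via $\sigma$, the tensor eigenvalue equation $\mathcal{A}*_N\mathcal{X} = \lambda\mathcal{X}$ is equivalent to the matrix eigenvalue equation $\phi(\mathcal{A})v = \lambda v$.

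With these dictionary items in hand, the proof becomes a short invocation of the classical spectral theorem. Setting $A := \phi(\mathcal{A})$, which is Hermitian, one obtains a unitary $U\in\mathbb{C}^{|\textbf{I}|\times|\textbf{I}|}$ and a diagonal $D$ such that $U^*AU = D$ and the diagonal entries of $D$ are the eigenvalues of $A$. Defining $\mathcal{U} := \phi^{-1}(U)$ and $\mathcal{D} := \phi^{-1}(D)$, applying $\phi^{-1}$ to $U^*AU = D$ and using multiplicativity together with the $*$-property yields $\mathcal{U}^H *_N \mathcal{A} *_N \mathcal{U} = \mathcal{D}$, and the eigenvalue correspondence guarantees that the diagonal entries of $\mathcal{D}$ are precisely the eigenvalues of $\mathcal{A}$.

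The main obstacle, though not a deep one, is the bookkeeping required to verify that $\phi$ respects the Einstein product; this reduces to rewriting the contraction over multi-indices $(k_1,\ldots,k_N)$ as a single sum over $\sigma(k_1,\ldots,k_N)\in[|\textbf{I}|]$, which is an easy application of Fubini on finite sets, but care must be taken to use the same bijection $\sigma$ consistently for the row and column index groups so that diagonal tensors correspond to diagonal matrices. An alternative, more intrinsic, route would mimic the classical inductive proof directly in the tensor setting (pick an eigenpair via the fundamental theorem of algebra applied to the characteristic polynomial, use Hermiticity to show that $\mathcal{A}$ leaves the orthogonal complement of the eigentensor invariant, and induct); however, the unfolding approach above is both shorter and makes the matrix--tensor dictionary explicit, which will be repeatedly useful in later sections of the paper.
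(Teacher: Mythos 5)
Your proof is correct. Note that the paper does not actually prove this lemma --- it is quoted from the cited reference \cite{erfanifar2024polar} --- so there is no in-paper argument to compare against; but your unfolding argument is sound and self-contained, and it relies on exactly the $*$-isomorphism $\phi$ (multiplicativity under $*_N$, compatibility with $(\cdot)^H$, preservation of diagonality and of eigenvalues) that the paper itself formalizes later in Lemma \ref{lem:isomorphism}, so your route is fully consistent with the paper's own machinery. The one point you rightly flag --- using the same bijection $\sigma$ on both index groups so that diagonal tensors correspond to diagonal matrices and the identity tensor to the identity matrix --- is indeed the only place where care is needed, and you handle it correctly.
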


Similar to the matrix case, a positive-definite tensor and a Hermitian tensor is simultaneously  congruent to diagonal tensors, as given by the following result, which will be used in the proof of the sectorial tensor  decomposition  Theorem \ref{th2_3}. 

\begin{lemma}\label{th2_2}
Let $\mathcal{A},\mathcal{B}\in \mathbb{C}^{(I_{1} \times \cdots \times I_{N})\times (I_{1} \times \cdots \times I_{N})}$ be Hermitian tensors with  $\mathcal{A}$ being positive-definite. Then there exists a nonsingular tensor $\mathcal{C}\in \mathbb{C}^{(I_{1} \times \cdots \times I_{N})\times (I_{1} \times \cdots \times I_{N})}$, such that
\begin{equation}\label{eq:ABCD}
\mathcal{C}^{H} *_{N} \mathcal{A} *_{N} \mathcal{C}= \mathcal{I}, \quad \mathcal{C}^{H} *_{N} \mathcal{B} *_{N} \mathcal{C}= \mathcal{D},
\end{equation}
where $\mathcal{I}$ is the identity tensor and $\mathcal{D}$ is a real diagonal tensor.
\end{lemma}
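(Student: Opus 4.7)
The plan is to mirror the classical matrix proof of simultaneous diagonalization, using the tensor spectral theorem (Lemma \ref{th2_1}) twice and reducing the problem in two stages: first normalize $\A$ to the identity by a congruence, and then diagonalize the transformed $\B$ by a unitary congruence.

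First, I would apply Lemma \ref{th2_1} to the positive-definite Hermitian tensor $\A$ to obtain a unitary $\mathcal{U}_1$ with $\mathcal{U}_1^{H} *_{N} \A *_{N} \mathcal{U}_1 = \Lambda$, where $\Lambda$ is a diagonal tensor whose diagonal entries are the eigenvalues of $\A$. Positive-definiteness, applied to any unit-Frobenius-norm eigentensor, forces every such eigenvalue to be real and strictly positive, so $\Lambda^{-1/2}$ can be defined entrywise as the diagonal tensor with diagonal entries $\lambda_{i_1\cdots i_N i_1\cdots i_N}^{-1/2}$. Setting $\C_1 = \mathcal{U}_1 *_{N} \Lambda^{-1/2}$ and combining associativity of the Einstein product with $\mathcal{U}_1^{H} *_{N} \mathcal{U}_1 = \mathcal{I}$ and the identity $\Lambda^{-1/2} *_{N} \Lambda *_{N} \Lambda^{-1/2} = \mathcal{I}$ (which holds because the Einstein product of diagonal tensors is itself diagonal and acts entrywise on the diagonal) yields $\C_1^{H} *_{N} \A *_{N} \C_1 = \mathcal{I}$. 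The tensor $\C_1$ is nonsingular, as it is the Einstein product of two nonsingular factors.

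Next I would consider $\tilde{\B} := \C_1^{H} *_{N} \B *_{N} \C_1$, which is Hermitian because $\B$ is Hermitian and the conjugate transpose satisfies $(\mathcal{P} *_{N} \mathcal{Q})^{H} = \mathcal{Q}^{H} *_{N} \mathcal{P}^{H}$. Applying Lemma \ref{th2_1} to $\tilde{\B}$ produces a unitary $\mathcal{U}_2$ such that $\mathcal{U}_2^{H} *_{N} \tilde{\B} *_{N} \mathcal{U}_2 = \D$ is diagonal, and this $\D$ is automatically real because its diagonal entries, being eigenvalues of a Hermitian tensor, must equal their own complex conjugates. Taking $\C = \C_1 *_{N} \mathcal{U}_2$ and using $\mathcal{U}_2^{H} *_{N} \mathcal{U}_2 = \mathcal{I}$ gives both identities of \eqref{eq:ABCD}, and $\C$ is nonsingular as the Einstein product of nonsingular tensors. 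The main obstacle is not conceptual but careful bookkeeping: one must confirm that associativity of $*_{N}$, the product rule for conjugate transposes, the characterization of positive-definiteness via positive eigenvalues, and the entrywise behavior of $\Lambda^{-1/2}$ all transfer cleanly from the matrix setting to the Einstein-product setting. Once those ingredients are in place, the remaining algebra is routine.
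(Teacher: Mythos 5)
Your proof is correct and follows essentially the same two-stage route as the paper: first reduce $\mathcal{A}$ to the identity by a congruence, then unitarily diagonalize the transformed $\mathcal{B}$ via Lemma \ref{th2_1}. The only difference is cosmetic — the paper cites an external result for the existence of $\mathcal{P}$ with $\mathcal{P}^{H} *_{N} \mathcal{A} *_{N} \mathcal{P} = \mathcal{I}$, whereas you construct it explicitly as $\mathcal{U}_1 *_{N} \Lambda^{-1/2}$ from the spectral decomposition of $\mathcal{A}$, which makes the argument slightly more self-contained.
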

\begin{proof}
For the positive-definite tensor $\mathcal{A}$, according to Ref. \rm \cite{erfanifar2024polar} there exists a nonsingular tensor  $ \mathcal{P}$ such that $\mathcal{P}^{H} *_{N} \mathcal{A} *_{N} \mathcal{P}= \mathcal{I}$.  Consider the Hermitian tensor $\mathcal{P}^{H} *_{N} \mathcal{B} *_{N} \mathcal{P}$. By Lemma  \ref{th2_1}   there exists  a unitary tensor $\mathcal{Q}$ such that $\mathcal{Q}^{H} *_{N} (\mathcal{P}^{H} *_{N} \mathcal{B} *_{N} \mathcal{P}) *_{N} \mathcal{Q}= \mathcal{D}$, where $\mathcal{D}$ is a diagonal tensor.  
Moreover, as $\B$ is Hermitian, $\D$ is a real tensor. Let $\mathcal{C}=\mathcal{P} *_{N} \mathcal{Q}$. Then $\mathcal{C}$ is the constructed tensor that yields Eq. \eqref{eq:ABCD}.
\end{proof}

Using Lemmas \ref{th2_1} and \ref{th2_2}, we can derive the following tensor decomposition theorem, whose matrix counterpart can be found in Refs. \cite{r9,r3,r4}.

\begin{theorem}\label{th2_3}
Let $\mathcal{A}\in \mathbb{C}^{(I_{1} \times \cdots \times I_{N})\times (I_{1} \times \cdots \times I_{N})}$ be a sectorial tensor.  There exist a non-singular tensor $\mathcal{Q}$ and a unitary diagonal  tensor $\mathcal{D} $ of the same size, such that
\begin{equation}\label{eq:STD}
\mathcal{A}= \mathcal{Q}^{H} *_{N} \mathcal{D} *_{N} \mathcal{Q}.
\end{equation}
\end{theorem}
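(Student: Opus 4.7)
The plan is to mirror the classical matrix sectorial decomposition (cf. Refs.~\cite{r9,r3,r4}) inside the Einstein-product framework, relying on Lemmas \ref{convex} and \ref{th2_2} as black boxes. First I would rotate $\A$: since $W(\A)$ is convex by Lemma \ref{convex} and, by sectoriality, omits the origin, a supporting-line argument at the point of $W(\A)$ closest to $0$ produces an angle $\theta\in(-\pi,\pi]$ with $\mathrm{Re}(e^{\imath\theta}z)>0$ for every $z\in W(\A)$; compactness of the set of unit tensors promotes this strict inequality into a uniform positive lower bound. Writing $e^{\imath\theta}\A=\mathcal{H}+\imath\mathcal{K}$ with $\mathcal{H}=\tfrac{1}{2}(e^{\imath\theta}\A+e^{-\imath\theta}\A^{H})$ and $\mathcal{K}=\tfrac{1}{2\imath}(e^{\imath\theta}\A-e^{-\imath\theta}\A^{H})$ both Hermitian, the displayed inequality becomes exactly $\langle\mathcal{H}*_{N}\mathcal{X},\mathcal{X}\rangle>0$ for every nonzero $\mathcal{X}\in\mathbb{C}^{I_{1}\times\cdots\times I_{N}}$, so $\mathcal{H}$ is positive-definite.

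Next, applying Lemma \ref{th2_2} to the Hermitian pair $(\mathcal{H},\mathcal{K})$ yields a nonsingular tensor $\C$ with $\C^{H}*_{N}\mathcal{H}*_{N}\C=\mathcal{I}$ and $\C^{H}*_{N}\mathcal{K}*_{N}\C=\mathcal{D}'$ for some real diagonal tensor $\mathcal{D}'$, whence
\[
\C^{H}*_{N}(e^{\imath\theta}\A)*_{N}\C=\mathcal{I}+\imath\mathcal{D}'=:\mathcal{E}
\]
is diagonal with entries $1+\imath d'_{i_{1}\cdots i_{N}i_{1}\cdots i_{N}}$ lying strictly in the open right half-plane. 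Writing each entry in polar form as $r_{\mathbf{i}}e^{\imath\phi_{\mathbf{i}}}$ with $r_{\mathbf{i}}>0$, I would introduce the positive diagonal tensor $\mathcal{R}$ with entries $r_{\mathbf{i}}$ and the unitary diagonal tensor $\mathcal{U}_{0}$ with entries $e^{\imath\phi_{\mathbf{i}}}$, so that $\mathcal{E}=\mathcal{R}^{1/2}*_{N}\mathcal{U}_{0}*_{N}\mathcal{R}^{1/2}$ entry-wise. Setting $\Q=\mathcal{R}^{1/2}*_{N}\C^{-1}$ (a product of nonsingular factors, hence nonsingular) and $\D=e^{-\imath\theta}\mathcal{U}_{0}$ (still unitary diagonal), a direct substitution gives
\[
\Q^{H}*_{N}\D*_{N}\Q = e^{-\imath\theta}\C^{-H}*_{N}\mathcal{E}*_{N}\C^{-1}=\A,
\]
which is precisely the decomposition \eqref{eq:STD}.

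The step I expect to be the main obstacle is the rotation argument: turning the geometric fact ``a convex set missing $0$ lies in an open half-plane'' into uniform positive-definiteness of $\mathcal{H}$. This rests on carefully combining convexity (Lemma \ref{convex}), compactness of $W(\A)$ as a continuous image of the unit sphere in $\mathbb{C}^{I_{1}\times\cdots\times I_{N}}$, and the identity $\langle\mathcal{H}*_{N}\mathcal{X},\mathcal{X}\rangle=\mathrm{Re}\bigl(\langle(e^{\imath\theta}\A)*_{N}\mathcal{X},\mathcal{X}\rangle\bigr)$. Once this reduction is secured, Lemma \ref{th2_2} supplies the simultaneous diagonalization, and the remaining polar factorization of $\mathcal{E}$ is entry-wise on a diagonal tensor, so it transfers verbatim from the matrix case.
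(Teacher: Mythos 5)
Your proposal is correct and follows essentially the same route as the paper: rotate so that one Hermitian component of $\A$ is positive definite, invoke Lemma \ref{th2_2} to reduce to a diagonal tensor with entries in an open half-plane, and then peel off the unitary diagonal factor by an entry-wise polar factorization (the paper writes this as $\mathcal{D}_1 *_N (\mathcal{D}_1^{H}*_N\mathcal{D}_1)^{-1/2}$, which is the same thing). The only cosmetic difference is that you normalize the real part to $\mathcal{I}$ while the paper normalizes the imaginary part, a harmless $\pi/2$ shift in the rotation convention.
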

\begin{proof}
Define a set of angles  \begin{multline*}
S_{W}(\mathcal{A})= \{ \angle \langle \mathcal{A} *_{N} \mathcal{X}, \mathcal{X} \rangle : \\
\mathcal{X} \text{ is a nonzero tensor in } \mathbb{C}^{I_{1} \times \cdots \times I_{N}} \}.
\end{multline*} 
By Lemma \ref{convex}, the sectorialness of $\mathcal{A}$ implies that its numerical range $W(\mathcal{A})$ is contained in an open half plane, so there exists some $\theta$ such that $S_{W}(\mathcal{A}) \subset (\theta, \theta+\pi)$. Hence, $S_{W}(e^{-\imath \theta}\mathcal{A}) \subset (0, \pi)$. Without loss of generality, we  assume $S_{W}(\mathcal{A}) \subset (0, \pi)$. Defining the Hermitian tensors $\mathcal{H} = \frac{\mathcal{A} + \mathcal{A}^{H}}{2}$ and $\mathcal{K} = \frac{\mathcal{A} - \mathcal{A}^{H}}{2\imath}$, we have the decomposition $\mathcal{A} = \mathcal{H} + \imath \mathcal{K}$. Then for any nonzero tensor $\mathcal{X}$,  $\langle \mathcal{A} *_{N} \mathcal{X}, \mathcal{X} \rangle=\langle \mathcal{H} *_{N} \mathcal{X}, \mathcal{X} \rangle + \imath \langle \mathcal{K} *_{N} \mathcal{X}, \mathcal{X} \rangle$, where  $\langle \mathcal{H} *_{N} \mathcal{X}, \mathcal{X} \rangle$ is the real part and  $\langle \mathcal{K} *_{N} \mathcal{X}, \mathcal{X} \rangle$ is the imaginary part. Since $S_{W}(\mathcal{A}) \subset (0, \pi)$, it follows that $\langle \mathcal{K} *_{N} \mathcal{X}, \mathcal{X} \rangle > 0$ for all $\mathcal{X} \neq 0$, which establishes that $\mathcal{K}$ is positive definite. Consequently, by Lemma \ref{th2_2}, there exists a nonsingular tensor $\mathcal{C}\in \mathbb{C}^{(I_{1} \times \cdots \times I_{N})\times (I_{1} \times \cdots \times I_{N})}$ such that
\begin{equation*}
\mathcal{C}^{H} *_{N} \mathcal{H} *_{N} \mathcal{C}= \mathcal{D}_{0}, \quad \mathcal{C}^{H} *_{N} \mathcal{K} *_{N} \mathcal{C}= \mathcal{I},
\end{equation*}
where $\mathcal{D}_0$ is real and diagonal. 
Let $\mathcal{D}_{1}=\mathcal{D}_{0}+\imath\mathcal{I}$.  Then $\mathcal{D}_{1}$ is nonsingular, and hence $\mathcal{D}_{1}^{H}*_{N}\mathcal{D}_{1}$ is real  and  positive-definite. Denote 
\begin{align*}
   \mathcal{D}&=\mathcal{D}_{1}*_{N}(\mathcal{D}_{1}^{H}*_{N}\mathcal{D}_{1})^{-\frac{1}{2}}\\
   &=(\mathcal{D}_{1}^{H}*_{N}\mathcal{D}_{1})^{-\frac{1}{4}}*_{N}\mathcal{D}_{1}*_{N}(\mathcal{D}_{1}^{H}*_{N}\mathcal{D}_{1})^{-\frac{1}{4}} 
\end{align*} 
and $\mathcal{Q}=(\mathcal{D}_{1}^{H}*_{N}\mathcal{D}_{1})^{\frac{1}{4}}*_{N}\mathcal{C}^{-1}$. Then $\mathcal{D}$ is unitary and diagonal, and
\begin{eqnarray*}
\mathcal{A}&=&\mathcal{H}+\imath\mathcal{K}=\mathcal{Q}^{H} *_{N} \mathcal{D}_{0} *_{N} \mathcal{Q}+\imath(\mathcal{Q}^{H} *_{N} \mathcal{I} *_{N} \mathcal{Q}) \\
&=& \mathcal{Q}^{H} *_{N} \mathcal{D} *_{N} \mathcal{Q},
\end{eqnarray*}
which is Eq. \eqref{eq:STD}. The proof is completed.
\end{proof}

In this paper, the tensor  decomposition in Theorem \ref{th2_3} is referred to as the \emph{sectorial tensor  decomposition}. A sectorial tensor decomposition for a sectorial tensor is not unique. Nevertheless, the diagonal unitary tensor $\mathcal{D}$ is unique up to a permutation, whose matrix counterpart has been pointed out in \cite{r9}.

\begin{theorem} \label{thm:unique}
    The diagonal unitary tensor $\mathcal{D}$ in Theorem \ref{th2_3} is unique up to a permutation.
\end{theorem}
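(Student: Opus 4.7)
The plan is to extract the multiset of diagonal entries of $\mathcal{D}$ directly from $\mathcal{A}$ via a similarity invariant, and then to resolve a residual square-root ambiguity using the sectorial constraint. Throughout, let $\mathcal{A} = \mathcal{Q}_1^H *_N \mathcal{D}_1 *_N \mathcal{Q}_1 = \mathcal{Q}_2^H *_N \mathcal{D}_2 *_N \mathcal{Q}_2$ be two sectorial decompositions; the goal is to show that $\mathcal{D}_1$ and $\mathcal{D}_2$ have the same multiset of diagonal entries.

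First, since $\mathcal{A}$ is sectorial, $0 \notin W(\mathcal{A})$, and because every eigenvalue $\lambda$ of $\mathcal{A}$ satisfies $\lambda = \langle \mathcal{A} *_N \mathcal{Y}, \mathcal{Y}\rangle \in W(\mathcal{A})$ for any unit eigentensor $\mathcal{Y}$, the tensor $\mathcal{A}$ is nonsingular and $\mathcal{A}^{-H}$ is well defined. Exploiting the unitarity of each $\mathcal{D}_i$, which gives $\mathcal{D}_i^{-H} = \mathcal{D}_i$, I would compute
\[
\mathcal{A}^{-H} *_N \mathcal{A} \;=\; \mathcal{Q}_i^{-1} *_N \mathcal{D}_i *_N \mathcal{Q}_i^{-H} *_N \mathcal{Q}_i^H *_N \mathcal{D}_i *_N \mathcal{Q}_i \;=\; \mathcal{Q}_i^{-1} *_N \mathcal{D}_i^2 *_N \mathcal{Q}_i, \quad i=1,2.
\]
Thus $\mathcal{D}_1^2$ and $\mathcal{D}_2^2$ are each similar to $\mathcal{A}^{-H} *_N \mathcal{A}$, so they share the same multiset of eigenvalues. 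Since the eigenvalues of a diagonal tensor are exactly its diagonal entries (the indicator tensor at each index is an eigentensor), there is a bijection $\sigma$ of index tuples such that $\bigl[(\mathcal{D}_1)_{kk}\bigr]^2 = \bigl[(\mathcal{D}_2)_{\sigma(k)\sigma(k)}\bigr]^2$ for every $k$, meaning each paired entry is either equal to or antipodal with its partner on the unit circle.

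Second, to rule out the antipodal case I would argue geometrically. For any sectorial decomposition $\mathcal{A} = \mathcal{Q}^H *_N \mathcal{D} *_N \mathcal{Q}$, the substitution $\mathcal{Y} = \mathcal{Q} *_N \mathcal{X}$ is a bijection on nonzero tensors and yields $W'(\mathcal{A}) = W'(\mathcal{D})$. Plugging in the indicator tensor at index $k$ on the right-hand side shows that every diagonal entry of $\mathcal{D}$ lies in $W'(\mathcal{A})$. By Lemma \ref{convex} and Definition \ref{def:sectorial}, $W(\mathcal{A})$ sits in an open half plane of $\mathbb{C}$, and $W'(\mathcal{A})$, being its positive-scalar cone, sits in the same open half plane. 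An open half plane contains no pair of antipodal unit-modulus points, so the $\pi$-difference option is excluded. Hence $(\mathcal{D}_1)_{kk} = (\mathcal{D}_2)_{\sigma(k)\sigma(k)}$ for every $k$, proving that $\mathcal{D}_1$ and $\mathcal{D}_2$ coincide up to a permutation.

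The main obstacle is the disambiguation in the second step: the similarity invariant $\mathcal{A}^{-H} *_N \mathcal{A}$ only recovers $\mathcal{D}_i^2$, leaving a genuine $\pm$ indeterminacy when reconstructing $\mathcal{D}_i$. The sectorial hypothesis is used in exactly the right way—it forces all diagonal entries into a common open half plane and thereby breaks the ambiguity; without it, the statement would fail. The only technical care needed is the identity $W'(\mathcal{A}) = W'(\mathcal{D})$, which follows from the nonsingular change of variables $\mathcal{Y} = \mathcal{Q} *_N \mathcal{X}$, exactly paralleling the matrix argument pointed out in \cite{r9}.
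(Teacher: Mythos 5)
Your proof is correct, but it takes a genuinely different route from the paper's. The paper works with the congruence relation $\mathcal{D}_2=\mathcal{T}^{H}*_N\mathcal{D}_1*_N\mathcal{T}$ directly: it splits each $\mathcal{D}_i$ into real diagonal parts $\mathcal{C}_i+\imath\mathcal{S}_i$, observes that congruence by a nonsingular $\mathcal{T}$ preserves the rank of the ``sine'' part, and, after rotating each candidate unimodular value $\beta$ onto the real axis, reads off the multiplicity of $\beta$ in $\mathcal{D}_1$ and $\mathcal{D}_2$ from that rank --- a Sylvester-inertia style counting argument (sectoriality is what silently guarantees that $\beta$ and $-\beta$ cannot both occur, so the rank deficiency counts only $+\beta$). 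You instead pass to the similarity invariant $\mathcal{A}^{-H}*_N\mathcal{A}=\mathcal{Q}_i^{-1}*_N\mathcal{D}_i^{2}*_N\mathcal{Q}_i$, which by Lemma \ref{th2_5} pins down the multiset $\{(\mathcal{D}_i)_{kk}^{2}\}$ independently of the decomposition, and then you break the resulting $\pm$ ambiguity by noting that all diagonal entries lie in $W'(\mathcal{A})=W'(\mathcal{D})$, which sits in an open half plane through the origin and hence contains no antipodal pair. Both arguments are sound and both lean on sectoriality at exactly the same point (excluding antipodal values); yours is arguably cleaner and has the side benefit of directly certifying the correctness of Algorithm \ref{alg}, which computes phases from the eigenvalues of $\mathcal{A}^{-1}*_N\mathcal{A}^{H}$, whereas the paper's congruence/rank argument avoids inverting $\mathcal{A}$ and is the version that would survive a relaxation to semi-sectorial tensors, where $\mathcal{A}^{-H}$ need not exist. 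The only steps you should make explicit in a final write-up are (i) that a sectorial tensor is invertible because every eigenvalue lies in $W(\mathcal{A})\not\ni 0$ (or, more simply, because $\mathcal{A}=\mathcal{Q}^{H}*_N\mathcal{D}*_N\mathcal{Q}$ with $\mathcal{Q}$ nonsingular and $\mathcal{D}$ unitary), and (ii) that similarity preserves eigenvalue multiplicities, not just the spectrum as a set, so that the squared entries match as multisets.
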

\begin{proof}
    Suppose $\mathcal{A}= \mathcal{Q}_{1}^{H} *_{N} \mathcal{D}_{1} *_{N} \mathcal{Q}_{1}= \mathcal{Q}_{2}^{H} *_{N} \mathcal{D}_{2} *_{N} \mathcal{Q}_{2}$ are two sectorial tensor decompositions of a sectorial tensor $\mathcal{A}$. Simple algebraic manipulations yield
$$\mathcal{D}_{2}=\mathcal{T}^{H}*_{N}\mathcal{D}_{1}*_{N}\mathcal{T},$$
    where $\mathcal{T}=\mathcal{Q}_{1}*_{N}\mathcal{Q}_{2}^{-1}$ is a nonsingular tensor,  $\mathcal{D}_{1}$ and $\mathcal{D}_{2}$ are unitary diagonal tensors. Denote $\mathcal{D}_{1}=\mathcal{C}_{1}+\imath\mathcal{S}_{1},\mathcal{D}_{2}=\mathcal{C}_{2}+\imath\mathcal{S}_{2}$, where $\mathcal{C}_{1},\mathcal{C}_{2},\mathcal{S}_{1},\mathcal{S}_{2}$ are all real diagonal tensors. We can obtain $$\mathcal{C}_{2}=\mathcal{T}^{H}*_{N}\mathcal{C}_{1}*_{N}\mathcal{T},\quad\mathcal{S}_{2}=\mathcal{T}^{H}*_{N}\mathcal{S}_{1}*_{N}\mathcal{T}.$$
    If there are $k$ elements $1$ in the diagonal part of $\mathcal{D}_{1}$, then $rank(\mathcal{S}_{1})=n-k$. Due to the reversibility of $\mathcal{T}$, we know $rank(\mathcal{S}_{2})=n-k$, which indicate that the number  $1$ in the diagonal part of $\mathcal{D}_{2}$ is also $k$. For an element $\beta$ with $|\beta|=1$ in the diagonal part of $\mathcal{D}_{1}$.
    Consider $e^{-\imath\angle\beta}\mathcal{D}_{1}$ and $e^{-\imath\angle\beta}\mathcal{D}_{2}$, we also have the decompositions
    $$(e^{-\imath\angle\beta}\mathcal{C}_{2})=\mathcal{T}^{H}*_{N}(e^{-\imath\angle\beta}\mathcal{C}_{1})*_{N}\mathcal{T},$$
    $$(e^{-\imath\angle\beta}\mathcal{S}_{2})=\mathcal{T}^{H}*_{N}(e^{-\imath\angle\beta}\mathcal{S}_{1})*_{N}\mathcal{T}.$$
    Following the same procedure, it can be concluded that  the diagonal part of $\mathcal{D}_{1}$ contains the same number of elements $\beta$ with the diagonal part of $\mathcal{D}_{2}$, which complete the proof. 
\end{proof}

By Theorem \ref{th2_3}, we have
\begin{align*}
W' (\mathcal{A}) & = \{ \langle \mathcal{A} *_{N} \mathcal{X}, \mathcal{X} \rangle: \nonumber \\
& \quad \mathcal{X} \text{ is a nonzero tensor in } \mathbb{C}^{I_{1} \times \cdots \times I_{N}} \} \nonumber \\
& = \{ \langle \mathcal{D} *_{N} (\mathcal{Q} *_{N} \mathcal{X}), (\mathcal{Q} *_{N} \mathcal{X}) \rangle : \nonumber \\
& \quad \mathcal{X} \text{ is a nonzero tensor in } \mathbb{C}^{I_{1} \times \cdots \times I_{N}} \} \nonumber \\
& = W' (\mathcal{D}).
\label{eq:W_A_D}
\end{align*}
 Therefore, if $\mathcal{A}$ is a sectorial tensor, then the diagonal unitary tensor  $\mathcal{D}$ in the sectorial tensor  decomposition is sectorial too.

 We are ready to define  phases of sectorial tensors.
\begin{definition}\label{def:phases}
Given the  sectorial tensor  decomposition \eqref{eq:STD} of a sectorial tensor $\mathcal{A}$, its phases are defined as the phases of the eigenvalues of the diagonal unitary tensor $\mathcal{D}$. We order the phases by
\begin{equation*}
\bar{\Phi}(\mathcal{A}) = \Phi_{1}(\mathcal{A}) \geq   \Phi_{2}(\mathcal{A}) \geq \cdots \geq \Phi_{|\textbf{I}|}(\mathcal{A})
 = \underline{\Phi}(\mathcal{A}).
\end{equation*}
\end{definition}

 As $\A$ is sectorial,   by Definition \ref{def:sectorial},  $\bar{\Phi}(\mathcal{A})-\underline{\Phi}(\mathcal{A})< \pi$. 


 As the phases of a tensor are defined via its sectorial tensor  decomposition which is a congruent transformation, it is important to show that  tensor phases are invariant under congruent transformations.
\begin{lemma}\label{th2_4}
The phases of a sectorial tensor $\mathcal{A}$ are invariant under congruent transformations, i.e.,  $\Phi(\mathcal{A})=\Phi(\mathcal{Q}^{H} *_{N} \mathcal{A} *_{N} \mathcal{Q}) $ for an arbitrary nonsingular tensor $\mathcal{Q}$.
\end{lemma}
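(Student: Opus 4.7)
The plan is to reduce the invariance claim to the uniqueness of the diagonal unitary factor in the sectorial tensor decomposition (Theorem \ref{thm:unique}). First I would start with any sectorial tensor decomposition $\mathcal{A} = \mathcal{P}^{H} *_{N} \mathcal{D} *_{N} \mathcal{P}$ provided by Theorem \ref{th2_3}, so that by Definition \ref{def:phases} the phases of $\mathcal{A}$ are exactly the phases of the diagonal entries of $\mathcal{D}$. Setting $\mathcal{B} := \mathcal{Q}^{H} *_{N} \mathcal{A} *_{N} \mathcal{Q}$, the entire argument amounts to exhibiting a sectorial tensor decomposition of $\mathcal{B}$ whose diagonal unitary factor is again $\mathcal{D}$.

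Before invoking Definition \ref{def:phases} for $\mathcal{B}$, I must verify that $\mathcal{B}$ is itself sectorial. For any nonzero tensor $\mathcal{X}$,
\begin{equation*}
\langle \mathcal{B} *_{N} \mathcal{X}, \mathcal{X} \rangle = \langle \mathcal{A} *_{N} (\mathcal{Q} *_{N} \mathcal{X}), \mathcal{Q} *_{N} \mathcal{X} \rangle,
\end{equation*}
and since $\mathcal{Q}$ is nonsingular (hence invertible) the tensor $\mathcal{Q} *_{N} \mathcal{X}$ ranges over all nonzero tensors as $\mathcal{X}$ does, so $W'(\mathcal{B}) = W'(\mathcal{A})$. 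Because $\mathcal{A}$ is sectorial, $W(\mathcal{A})$ lies in an open half plane, which forces the angular numerical range $W'(\mathcal{A})$ to lie in the corresponding open half cone and to avoid the origin; therefore $0 \notin W(\mathcal{B}) \subseteq W'(\mathcal{B})$, and $\mathcal{B}$ is sectorial.

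The concluding step is the direct computation
\begin{equation*}
\mathcal{B} = (\mathcal{P} *_{N} \mathcal{Q})^{H} *_{N} \mathcal{D} *_{N} (\mathcal{P} *_{N} \mathcal{Q}),
\end{equation*}
which is a sectorial tensor decomposition of $\mathcal{B}$ since $\mathcal{P} *_{N} \mathcal{Q}$ is nonsingular as a product of two nonsingular square tensors. Theorem \ref{thm:unique} then forces the diagonal unitary factor in every sectorial tensor decomposition of $\mathcal{B}$ to coincide with $\mathcal{D}$ up to a permutation of diagonal entries, and Definition \ref{def:phases} immediately yields $\Phi_{i}(\mathcal{B}) = \Phi_{i}(\mathcal{A})$ for every $i$.

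The only mildly delicate points are the brief verifications that the angular numerical range is invariant under nonsingular congruence and that the Einstein product of two nonsingular even-order square tensors remains nonsingular; both follow at once from the fact that a nonsingular even-order square tensor admits an inverse under the Einstein product. With these in hand, the uniqueness Theorem \ref{thm:unique} does all of the real work, and no further computation is required.
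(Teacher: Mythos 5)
Your argument is correct and follows exactly the route the paper takes: the paper simply remarks that Lemma~\ref{th2_4} is an immediate consequence of Theorem~\ref{thm:unique}, and your proposal fills in precisely the details that remark leaves implicit (composing the congruences to exhibit a sectorial decomposition of $\mathcal{Q}^{H} *_{N} \mathcal{A} *_{N} \mathcal{Q}$ with the same diagonal unitary factor $\mathcal{D}$, after checking sectoriality is preserved). No changes needed.
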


 Lemma \ref{th2_4} is an immediate consequence of Theorem \ref{thm:unique}.

The following result shows that tensor eigenvalues  are invariant under the similarity transformations.

\begin{lemma}\label{th2_5}
Similarity transformations under the Einstein product do not change the eigenvalues of tensors. i.e., if $\mathcal{T}$ is a nonsingular tensor, then $\mathcal{A}$ and $\mathcal{T}^{-1} *_{N} \mathcal{A}  *_{N} \mathcal{T}$ have the same eigenvalues.
\end{lemma}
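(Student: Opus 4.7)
The plan is to mimic the standard matrix argument, working directly from the eigenvalue definition in the excerpt, since the determinant introduced in the paper is already defined as the product of eigenvalues and so cannot be used in a non-circular way here.

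First I would take an arbitrary eigenvalue $\lambda$ of $\mathcal{A}$ with associated nonzero eigentensor $\mathcal{X}\in\mathbb{C}^{I_{1}\times\cdots\times I_{N}}$, so that $\mathcal{A}*_{N}\mathcal{X}=\lambda\mathcal{X}$. Since $\mathcal{T}$ is nonsingular, $\mathcal{T}^{-1}$ exists and is also nonsingular, so defining $\mathcal{Y}:=\mathcal{T}^{-1}*_{N}\mathcal{X}$ yields a nonzero tensor; indeed if $\mathcal{Y}=0$ then $\mathcal{X}=\mathcal{T}*_{N}\mathcal{Y}=0$, contradicting the choice of $\mathcal{X}$.

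Next I would verify associativity of $*_{N}$ (a routine index shuffling from the element-wise definition) so that
\begin{equation*}
(\mathcal{T}^{-1}*_{N}\mathcal{A}*_{N}\mathcal{T})*_{N}\mathcal{Y}
=\mathcal{T}^{-1}*_{N}\mathcal{A}*_{N}(\mathcal{T}*_{N}\mathcal{T}^{-1})*_{N}\mathcal{X}
=\mathcal{T}^{-1}*_{N}(\mathcal{A}*_{N}\mathcal{X})=\lambda\mathcal{Y},
\end{equation*}
showing that $\lambda$ is an eigenvalue of $\mathcal{T}^{-1}*_{N}\mathcal{A}*_{N}\mathcal{T}$ with eigentensor $\mathcal{Y}$. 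Hence every eigenvalue of $\mathcal{A}$ is an eigenvalue of $\mathcal{T}^{-1}*_{N}\mathcal{A}*_{N}\mathcal{T}$.

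For the reverse inclusion I would simply apply the same reasoning with the roles of $\mathcal{A}$ and $\mathcal{T}^{-1}*_{N}\mathcal{A}*_{N}\mathcal{T}$ swapped, using the nonsingular tensor $\mathcal{T}^{-1}$ in place of $\mathcal{T}$. There is essentially no hard obstacle: the only delicate point is ensuring that the Einstein product is associative and that $\mathcal{T}*_{N}\mathcal{T}^{-1}=\mathcal{T}^{-1}*_{N}\mathcal{T}=\mathcal{I}$ can be freely inserted and removed in an expression, both of which follow directly from the definitions given in Section \ref{Preliminary}.
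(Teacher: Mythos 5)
Your argument is correct and is precisely the ``straightforward'' proof that the paper omits: conjugating the eigentensor by $\mathcal{T}^{-1}$ and using associativity of $*_N$ together with $\mathcal{T}*_N\mathcal{T}^{-1}=\mathcal{I}$ is the standard similarity argument, and your symmetry step handles the reverse inclusion cleanly. The only residual subtlety, which the paper also glosses over, is that this establishes equality of the eigenvalue \emph{sets}; if one wants equality with multiplicities, the quickest route is the unfolding isomorphism $\phi$ of Lemma \ref{lem:isomorphism}, which reduces the claim to matrix similarity.
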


The proof of Lemma \ref{th2_5} is straightforward, thus is omitted.



In general, given a sectorial tensor $\A$, it is not easy to perform the sectorial tensor transformation in Theorem \ref{th2_3} to find the unitary diagonal tensor $\D$ to get the phases of $\A$; \cite{r1, ke2016numerical}.  Fortunately, Lemma \ref{th2_5} provide an alternative way.  Let $\mathcal{A}\in \mathbb{C}^{(I_{1} \times \cdots \times I_{N})\times (I_{1} \times \cdots \times I_{N})}$ be a sectorial tensor with the sectorial tensor decomposition $\mathcal{A}= \mathcal{T}^{H} *_{N} \mathcal{D} *_{N} \mathcal{T}$. Then 
\begin{equation*}\label{eq3_4}
	\mathcal{A}^{-1} *_{N} \mathcal{A}^{H}=\mathcal{T}^{-1} *_{N} \mathcal{D}^{-1} *_{N} \mathcal{D}^{H} *_{N} \mathcal{T}.
\end{equation*}
Because $\mathcal{D}$ is a diagonal unitary tensor, $\mathcal{D}^{-1} *_{N} \mathcal{D}^{H}=\mathcal{D}^{-2}$. Therefore, $\mathcal{A}^{-1} *_{N} \mathcal{A}^{H}=\mathcal{T}^{-1} *_{N} \mathcal{D}^{-2}  *_{N} \mathcal{T}$. In other words,  $\mathcal{A}^{-1} *_{N} \mathcal{A}^{H}$ is similar to the diagonal unitary tensor $\mathcal{D}^{-2}$. Therefore, if we want to compute the phases of $\mathcal{A}$, by Lemma \ref{th2_5} we can  calculate the eigenvalues of $\mathcal{A}^{-1} *_{N} \mathcal{A}^{H}$. See  Example \ref{calphase} below for a simple illustration.

\begin{algorithm}
\caption{Compute the phases of tensor $\mathcal{A}$}
\label{alg}
\begin{algorithmic}[1]
\Require Tensor $\mathcal{A}$.
\Ensure All phases of $\mathcal{A}$.
\State Compute tensor $\mathcal{A}^{-1} *_{N} \mathcal{A}^{H}$.
\State Compute all the eigenvalues of $\mathcal{A}^{-1} *_{N} \mathcal{A}^{H}$ \cite{cui2016eigenvalue}, and denote them by $\lambda_{1}, \lambda_{2},\cdots,\lambda_{|\textbf{I}|}$.
\State Record the phases of $\lambda_{1}, \lambda_{2},\cdots,\lambda_{|\textbf{I}|}$ as $\theta_{1},\theta_{2},\cdots,\theta_{|\textbf{I}|}$.
\State The phases of $\mathcal{A}$ are $-\frac{1}{2}\theta_{1},-\frac{1}{2}\theta_{2},\cdots,-\frac{1}{2}\theta_{|\textbf{I}|}$.
\end{algorithmic}
\end{algorithm}

\bigskip

\begin{example}\label{calphase}
    Consider a tensor 
    $\mathcal{A}\in\mathbb{C}^{(2\times 2)\times (2\times 2)}$ with
    $$\mathcal{A}(1,1,:,:)=\begin{pmatrix}
        e^{\imath\theta_{1}}&e^{\imath\theta_{1}}\\
        e^{\imath\theta_{1}}&0
    \end{pmatrix},$$
    $$ 
    \mathcal{A}(1,2,:,:)=\begin{pmatrix}
        e^{\imath\theta_{1}}&e^{\imath\theta_{1}}+e^{\imath\theta_{2}}\\
        e^{\imath\theta_{1}}&e^{\imath\theta_{2}}
    \end{pmatrix},
    $$
    $$\mathcal{A}(2,1,:,:)=\begin{pmatrix}
        e^{\imath\theta_{1}}&e^{\imath\theta_{1}}\\
        e^{\imath\theta_{1}}+e^{\imath\theta_{3}}&0
    \end{pmatrix},$$
    $$\mathcal{A}(2,2,:,:)=\begin{pmatrix}
        0&e^{\imath\theta_{2}}\\
        0&e^{\imath\theta_{4}}+e^{\imath\theta_{2}}
    \end{pmatrix},
    $$
    where $\theta_{1},\theta_{2},\theta_{3},\theta_{4}\in(-\pi,\pi)$. 
    By Algorithm \ref{alg}, we have
    $$(\mathcal{A}^{-1} *_{2} \mathcal{A}^{H})(1,1,:,:)=\begin{pmatrix}
        e^{-2\imath\theta_{1}}&e^{-2\imath\theta_{1}}-e^{-2\imath\theta_{2}}\\
        e^{-2\imath\theta_{1}}-e^{-2\imath\theta_{3}}&-e^{-2\imath\theta_{2}}+e^{-2\imath\theta_{4}}
    \end{pmatrix},$$
    $$(\mathcal{A}^{-1} *_{2} \mathcal{A}^{H})(1,2,:,:)=\begin{pmatrix}
        0&e^{-2\imath\theta_{2}}\\
        0&e^{-2\imath\theta_{2}}-e^{-2\imath\theta_{4}}
    \end{pmatrix},
    $$
    $$(\mathcal{A}^{-1} *_{2} \mathcal{A}^{H})(2,1,:,:)=\begin{pmatrix}
        0&0\\
        e^{-2\imath\theta_{3}}&0
    \end{pmatrix},$$
    $$(\mathcal{A}^{-1} *_{2} \mathcal{A}^{H})(2,2,:,:)=\begin{pmatrix}
        0&0\\
        0&e^{-2\imath\theta_{4}}
    \end{pmatrix}.
    $$
Eigenvalues of $\mathcal{A}^{-1} *_{2} \mathcal{A}^{H}$ are $e^{-2\imath\theta_{1}},e^{-2\imath\theta_{2}},e^{-2\imath\theta_{3}},e^{-2\imath\theta_{4}}$, and their phases  are $-2\theta_{1}, -2\theta_{2}, -2\theta_{3}, -2\theta_{4}$. Multiply all of these by $-\frac{1}{2}$ and we obtain all phases of $\mathcal{A}$ as $\theta_{1},\theta_{2},\theta_{3},\theta_{4}$.
\end{example}

\subsection{Phases of compressions of sectorial tensors}\label{Phases of compression}
In this subsection, we first present the maximin and minimax expressions of tensor phases, after that we introduce the inequality between the phases of a sectorial tensor and those of its compressions. 
The matrix case of the following result is proved in \cite{r3}.

\begin{lemma}\label{th3_1}
	The phases of a sectorial tensor $\mathcal{A}\in \mathbb{C}^{(I_{1} \times \cdots \times I_{N})\times (I_{1} \times \cdots \times I_{N})}$  enjoy the following properties.
\begin{align}
    \Phi_{i}(\mathcal{A})
    &= \max_{\substack{\mathcal{M}: \dim \mathcal{M}=i}} \min_{\substack{\mathcal{X} \in \mathcal{M}, \\ \|\mathcal{X}\|=1}}  \angle (\mathcal{X}^{H} *_{N} \mathcal{A} *_{N} \mathcal{X}) \nonumber  \\
    &= \min_{\substack{\mathcal{N}: \dim \mathcal{N}=\\
    |\textbf{I}|-i+1}} \max_{\substack{\mathcal{X} \in \mathcal{N}, \\ \|\mathcal{X}\|=1}}  \angle (\mathcal{X}^{H} *_{N} \mathcal{A} *_{N} \mathcal{X}), \label{eq:dec5_a} 
\end{align}
where $\mathcal{M},\mathcal{N}$ are the subspaces of $ \mathbb{C}^{I_{1} \times \cdots \times I_{N}}$.  In particular,
\begin{equation*}
    \bar{\Phi}(\mathcal{A})
    =\max_{\substack{\mathcal{X} \in \mathbb{C}^{I_{1} \times \cdots \times I_{N}} \\||\mathcal{X}||=1}}  \angle (\mathcal{X}^{H} *_{N} \mathcal{A}  *_{N} \mathcal{X} ) ,
\end{equation*}
and
\begin{equation*}
    \underline{\Phi}(\mathcal{A})
    =\min_{\substack{\mathcal{X} \in \mathbb{C}^{I_{1} \times \cdots \times I_{N}} \\||\mathcal{X}||=1} } \angle (\mathcal{X}^{H} *_{N} \mathcal{A}  *_{N} \mathcal{X} ).
\end{equation*}
\end{lemma}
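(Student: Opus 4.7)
The plan is to mirror the Courant--Fischer--Weyl minimax characterization of Hermitian eigenvalues: reduce to the diagonal unitary case via the sectorial tensor decomposition, and then exploit dimension counting on coordinate subspaces.

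First, invoke Theorem \ref{th2_3} to write $\mathcal{A} = \mathcal{Q}^{H} *_{N} \mathcal{D} *_{N} \mathcal{Q}$ with $\mathcal{Q}$ nonsingular and $\mathcal{D}$ diagonal unitary; by Theorem \ref{thm:unique}, after a permutation the diagonal entries may be taken to be $e^{\imath \Phi_{1}(\mathcal{A})}, \ldots, e^{\imath \Phi_{|\textbf{I}|}(\mathcal{A})}$ ordered by weakly decreasing phase. Substituting $\mathcal{Y} = \mathcal{Q} *_{N} \mathcal{X}$, using that $\mathcal{Q}$ induces a bijection on $i$-dimensional subspaces, and observing that $\angle(\cdot)$ is scale-invariant (so the unit-norm constraint can be dropped), the maximin to be evaluated becomes
\begin{equation*}
\max_{\dim \mathcal{M}' = i}\; \min_{\mathcal{O} \neq \mathcal{Y} \in \mathcal{M}'} \angle\bigl(\mathcal{Y}^{H} *_{N} \mathcal{D} *_{N} \mathcal{Y}\bigr).
\end{equation*}

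The key geometric ingredient is the following: writing the flattened coordinates of $\mathcal{Y}$ as $y_{1}, \ldots, y_{|\textbf{I}|}$ and setting $\phi_{k} := \Phi_{k}(\mathcal{A})$, one has $\mathcal{Y}^{H} *_{N} \mathcal{D} *_{N} \mathcal{Y} = \sum_{k} |y_{k}|^{2} e^{\imath \phi_{k}}$, and if the support $S$ of $\mathcal{Y}$ is nonempty then $\angle(\mathcal{Y}^{H} *_{N} \mathcal{D} *_{N} \mathcal{Y}) \in [\min_{k \in S} \phi_{k},\, \max_{k \in S} \phi_{k}]$. This is a planar convexity statement that uses sectorialness: because all $\phi_{k}$ lie in an open arc of length $<\pi$, a global rotation places them in $(-\pi/2, \pi/2)$, whereupon the closed half-plane $\{z : \sin(\arg z - \min_{k \in S} \phi_{k}) \geq 0\}$ contains every $e^{\imath \phi_{k}}$ for $k \in S$ and is closed under nonnegative combinations, giving the lower bound; the upper bound is symmetric.

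With this in hand, a standard dimension count closes both identities. Choosing $\mathcal{M}'$ to be the coordinate subspace on indices $\{1, \ldots, i\}$, every nonzero $\mathcal{Y} \in \mathcal{M}'$ has angle $\geq \phi_{i}$, with equality at the $i$th coordinate basis tensor, so the inner minimum equals $\phi_{i}$ and the maximin is at least $\phi_{i}$. Conversely, for any $i$-dimensional $\mathcal{M}'$, the coordinate subspace on $\{i, \ldots, |\textbf{I}|\}$ has dimension $|\textbf{I}| - i + 1$ and so shares a nonzero tensor with $\mathcal{M}'$; that tensor has angle $\leq \phi_{i}$, forcing the inner minimum to be at most $\phi_{i}$. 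Combining gives the maximin formula, and the minimax identity follows by the dual argument with the two coordinate subspaces interchanged. The expressions for $\bar{\Phi}(\mathcal{A})$ and $\underline{\Phi}(\mathcal{A})$ are simply the cases $i = 1$ and $i = |\textbf{I}|$. The sole nontrivial step is the planar-convexity claim above; everything else is formal Courant--Fischer bookkeeping. The hypothesis $0 \notin W(\mathcal{A})$ is essential there, since without it the phases could wrap around the unit circle and the ``angle of a positive combination'' monotonicity would break down.
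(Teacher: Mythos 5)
Your proof is correct and follows essentially the same route as the paper: reduce to the diagonal unitary tensor via the sectorial decomposition (using that a nonsingular congruence permutes $i$-dimensional subspaces and that the angle is scale-invariant), then settle the diagonal case. The paper simply asserts that the diagonal case yields $\Phi_i(\mathcal{A})$, whereas you supply the missing details — the planar-convexity bound $\angle\bigl(\sum_k |y_k|^2 e^{\imath\phi_k}\bigr)\in[\min_{k\in S}\phi_k,\max_{k\in S}\phi_k]$ and the Courant--Fischer dimension count — so your write-up is a more complete version of the same argument.
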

\begin{proof}
   We only prove the first half of Eq. \eqref{eq:dec5_a}, and the second half can be proven similarly. Considering the sectorial tensor  decomposition $\mathcal{A}= \mathcal{T}^{H} *_{N} \mathcal{D} *_{N} \mathcal{T}$ in Theorem \ref{th2_3}. Notice that if $\mathcal{X}$ takes an element from an $i$-dimensional subspace, then $\mathcal{T} *_{N}\mathcal{X}$  also takes an element  from an $i$-dimensional subspace. Let $\mathcal{Y} = \frac{\mathcal{T} *_{N}\mathcal{X}}{||\mathcal{T} *_{N}\mathcal{X}||}$. Then we have
   \begin{multline}\label{eq3_1}
    \max_{\mathcal{M}:\dim \mathcal{M}=i} \min_{\substack{\mathcal{X} \in \mathcal{M},\\ \|\mathcal{X}\|=1}}  
    \angle (\mathcal{X}^{H} *_{N} \mathcal{A} *_{N} \mathcal{X})  \\
    =\max_{\mathcal{M}:\dim \mathcal{M}=i} \min_{\substack{\mathcal{Y} \in \mathcal{M},\\ \|\mathcal{Y}\|=1}}  
    \angle (\mathcal{Y}^{H} *_{N} \mathcal{D} *_{N} \mathcal{Y}).
\end{multline}
    Because $\mathcal{D}$ is a diagonal unitary tensor,  the right-hand side of Eq. (\ref{eq3_1}) indicates that the solution to its left-hand side is the $i$-th largest phase of $\mathcal{A}$, namely $\Phi_{i}(\mathcal{A})$, thus establishing the first half of Eq. \eqref{eq:dec5_a}.
\end{proof}

Lemma \ref{th3_1} will be used in Section \ref{Phases of tensor product and sum} for studying  phases of sum  and product of sectorial tensors.

Next, we define compressions of tensors.

\begin{definition} \label{def:compression}
Given a tensor $\mathcal{A}\in \mathbb{C}^{(I_{1} \times \cdots \times I_{N})\times (I_{1} \times \cdots \times I_{N})}$, let $\mathcal{U} \in \mathbb{C}^{(I_{1} \times \cdots \times I_{N})\times (J_{1} \times \cdots \times J_{L})}$  with $|\textbf{J}|<|\textbf{I}|$ be a  column orthogonal tensor, that is,   $\mathcal{U}^{H} *_{N} \mathcal{U}=\mathcal{I}$.  The tensor 
$\tilde{\mathcal{A}}=\mathcal{U}^{H} *_{N} \mathcal{A} *_{N}\mathcal{U} \in  \mathbb{C}^{(J_1 \times \cdots \times J_L)\times (J_{1} \times \cdots \times J_{L})}$ is called a \emph{compression} of $\mathcal{A}$. 
\end{definition}

By construction, the size of a  compression $\tilde{\mathcal{A}}$ is smaller than the size of the original tensor  $\mathcal{A}$, this might be useful for efficient data processing. But we need to quantify the level of approximation via  compression. The following result gives the relation between  the phases of $\mathcal{A}$ and  its compression $\tilde{\mathcal{A}}$, which extends the matrix case in \cite[Lemma 7]{r5}. 

\begin{theorem}\label{th3_2}
Let $\mathcal{A}\in \mathbb{C}^{(I_{1} \times \cdots \times I_{N})\times (I_{1} \times \cdots \times I_{N})}$ be sectorial and $\tilde{\mathcal{A}}=\mathcal{U}^{H} *_{N} \mathcal{A} *_{N}\mathcal{U}$ be a compression of $\mathcal{A}$. Then $\tilde{\mathcal{A}}$ is also sectorial and its phases satisfy
\begin{equation} \label{eq:nov26_1}
    \Phi_{i}(\mathcal{A}) \geq \Phi_{i}(\tilde{\mathcal{A}}) \geq \Phi_{i+|\textbf{I}|-|\textbf{J}|}(\mathcal{A}),  ~~~ 1 
\leq i \leq |\textbf{J}|.
\end{equation}
\end{theorem}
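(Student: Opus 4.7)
The plan is to piggyback on the maximin/minimax characterization of tensor phases already established in Lemma~\ref{th3_1} and mimic the classical Cauchy-type interlacing argument for Hermitian matrices, using the fact that $\mathcal{U}$ is a linear isometry between the compressed tensor space and its image inside the original tensor space.

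\textbf{Step 1: Sectorialness of the compression.} I would first verify that $\tilde{\mathcal{A}}$ is sectorial, because only then does Lemma~\ref{th3_1} apply to it. For any unit tensor $\mathcal{Y}\in\mathbb{C}^{J_1\times\cdots\times J_L}$, the tensor $\mathcal{X}=\mathcal{U}*_N\mathcal{Y}\in\mathbb{C}^{I_1\times\cdots\times I_N}$ satisfies $\|\mathcal{X}\|_F=1$ (using $\mathcal{U}^H*_N\mathcal{U}=\mathcal{I}$), and
\[
\langle\tilde{\mathcal{A}}*_N\mathcal{Y},\mathcal{Y}\rangle=\langle\mathcal{U}^H*_N\mathcal{A}*_N\mathcal{U}*_N\mathcal{Y},\mathcal{Y}\rangle=\langle\mathcal{A}*_N\mathcal{X},\mathcal{X}\rangle.
\]
Hence $W(\tilde{\mathcal{A}})\subseteq W(\mathcal{A})$, and since $0\notin W(\mathcal{A})$ we obtain $0\notin W(\tilde{\mathcal{A}})$.

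\textbf{Step 2: Upper bound $\Phi_i(\mathcal{A})\geq\Phi_i(\tilde{\mathcal{A}})$.} I would apply the maximin half of Lemma~\ref{th3_1} to $\tilde{\mathcal{A}}$. Given any $i$-dimensional subspace $\tilde{\mathcal{M}}\subseteq\mathbb{C}^{J_1\times\cdots\times J_L}$, the image $\mathcal{M}:=\mathcal{U}*_N\tilde{\mathcal{M}}$ is an $i$-dimensional subspace of $\mathbb{C}^{I_1\times\cdots\times I_N}$ (the dimension is preserved because $\mathcal{U}$ is a column-orthogonal, hence injective, linear map on tensors). The identity displayed in Step~1, applied to unit tensors in $\tilde{\mathcal{M}}$, shows that the set of values $\angle(\mathcal{Y}^H*_N\tilde{\mathcal{A}}*_N\mathcal{Y})$ with $\mathcal{Y}\in\tilde{\mathcal{M}}$, $\|\mathcal{Y}\|_F=1$, coincides with the set of values $\angle(\mathcal{X}^H*_N\mathcal{A}*_N\mathcal{X})$ with $\mathcal{X}\in\mathcal{M}$, $\|\mathcal{X}\|_F=1$. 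Taking the min and then the max over all $\tilde{\mathcal{M}}$ (which correspond to a subfamily of the $i$-dimensional subspaces of the larger space) yields $\Phi_i(\tilde{\mathcal{A}})\leq\Phi_i(\mathcal{A})$.

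\textbf{Step 3: Lower bound $\Phi_i(\tilde{\mathcal{A}})\geq\Phi_{i+|\mathbf{I}|-|\mathbf{J}|}(\mathcal{A})$.} Here I would use the minimax half of Lemma~\ref{th3_1}. The key observation is a dimension match: $\Phi_i(\tilde{\mathcal{A}})$ is the minimum over $(|\mathbf{J}|-i+1)$-dimensional subspaces of the compressed space, while $\Phi_{i+|\mathbf{I}|-|\mathbf{J}|}(\mathcal{A})$ is the minimum over $(|\mathbf{I}|-(i+|\mathbf{I}|-|\mathbf{J}|)+1)=(|\mathbf{J}|-i+1)$-dimensional subspaces of the large space. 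Pushing forward any candidate $\tilde{\mathcal{N}}$ on the small side by $\mathcal{U}$ gives a subspace $\mathcal{U}*_N\tilde{\mathcal{N}}$ of the same dimension on the large side, and the inner-product identity of Step~1 again equates the two maxes. Since on the large side we minimize over \emph{all} $(|\mathbf{J}|-i+1)$-dimensional subspaces (a larger family), we obtain the desired inequality.

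\textbf{Anticipated difficulty.} The mathematics is fundamentally the classical Cauchy interlacing proof, so no deep new ideas are needed; the main care points are (i) confirming that $\mathcal{U}*_N(\cdot)$ genuinely preserves subspace dimension and inner products in the Einstein-product setting, which follows from $\mathcal{U}^H*_N\mathcal{U}=\mathcal{I}$, and (ii) keeping the index arithmetic straight so that the codimension in the large space matches the codimension in the small space, justifying the shift by $|\mathbf{I}|-|\mathbf{J}|$ in the lower bound.
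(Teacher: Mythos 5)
Your proposal is correct and follows essentially the same route as the paper: establish sectorialness via $W(\tilde{\mathcal{A}})\subseteq W(\mathcal{A})$, then apply the maximin characterization of Lemma~\ref{th3_1} with the push-forward $\mathcal{X}=\mathcal{U}*_N\mathcal{Y}$ to get the upper bound, and the minimax half (which the paper leaves as ``similar'') with the matching codimension count for the lower bound. The only cosmetic difference is that the paper derives $W(\tilde{\mathcal{A}})\subseteq W(\mathcal{A})$ through the sectorial decomposition of $\mathcal{A}$, whereas you argue it directly from the isometry property of $\mathcal{U}$; both are valid.
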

\begin{proof}
By the sectorial tensor  decomposition $\mathcal{A}= \mathcal{Q}^{H} *_{N} \mathcal{D} *_{N} \mathcal{Q}$ in Theorem \ref{th2_3}, we have $\tilde{\mathcal{A}}=(\mathcal{Q}*_{N} \mathcal{U})^{H} *_{N} \mathcal{D} *_{N} (\mathcal{Q}*_{N} \mathcal{U})$, which means $W(\tilde{\mathcal{A}})\subset W(\mathcal{A})$. Hence, $0\notin W(\tilde{\mathcal{A}})$, i.e., $\tilde{\mathcal{A}}$ is sectorial. On the other hand, using Lemma \ref{th3_1}, we can obtain
\begin{eqnarray*}
   \Phi_{i}(\mathcal{A})
    &=&\max_{\mathcal{M}:\dim \mathcal{M}=i}\min_{\substack{\mathcal{X} \in \mathcal{M}, \\||\mathcal{X}||=1} } \angle (\mathcal{X}^{H} *_{N} \mathcal{A}  *_{N} \mathcal{X} )\\
&\geq&\max_{\mathcal{K}:\dim\mathcal{K}=i}\min_{\substack{\mathcal{Y} \in \mathcal{K}, \\||\mathcal{Y}||=1}}  \angle (\mathcal{Y}^{H} *_{N} \tilde{\mathcal{A}}  *_{N} \mathcal{Y})\\
&=& \Phi_{i}(\tilde{\mathcal{A}}).
\end{eqnarray*}
The inequality above is due to the fact that for any element $\mathcal{Y}$ in $\mathcal{K}$ there exists  an element $\mathcal{X}=\mathcal{U} *_{L} \mathcal{Y}$ in $\mathcal{M}$ of the same size, such that $\mathcal{X}^{H} *_{N} \mathcal{A}  *_{N} \mathcal{X}=\mathcal{Y}^{H} *_{N} \tilde{\mathcal{A}}  *_{N} \mathcal{Y}$. The other half of the inequality \eqref{eq:nov26_1} can be proven in a similar  way.
\end{proof}


Let  $\mathcal{C} \in \mathbb{C}^{(I_{1} \times \cdots \times I_{N})\times (J_{1} \times \cdots \times J_{L})}$ with $|\textbf{J}|<|\textbf{I}|$ be a nonsingular tensor; cf. Definition \ref{def:rank}. $\C$ has a QR factorization \cite{erfanifar2024polar,r12}, i.e., $\mathcal{C}=\mathcal{Q} *_{N} \mathcal{R}$, where $\mathcal{Q} \in \mathbb{C}^{(I_{1} \times \cdots \times I_{N})\times (J_{1} \times \cdots \times J_{L})}$ is a column orthogonal tensor, and $\mathcal{R} \in \mathbb{C}^{(J_{1} \times \cdots \times J_{L})\times (
J_{1} \times \cdots \times J_{L})}$ is a nonsingular upper triangular tensor. By Lemma \ref{th2_4},  $\Phi_{i}(\mathcal{Q}^{H} *_{N} \mathcal{A} *_{N} \mathcal{Q} )=\Phi_{i}(\mathcal{R}^{H} *_{N} \mathcal{Q}^{H} *_{N} \mathcal{A} *_{N} \mathcal{Q} *_{N} \mathcal{R} )=\Phi_{i}(\mathcal{C}^{H} *_{N} \mathcal{A} *_{N} \mathcal{C} )$, and $\mathcal{Q}^{H} *_{N} \mathcal{A} *_{N} \mathcal{Q}$ is also a compression of $\mathcal{A}$. Therefore, Theorem \ref{th3_2} also holds for arbitrary nonsingular tensors, which are not necessarily  column orthogonal.

\begin{corollary}\label{co3_2_1}
Let $\mathcal{A}\in \mathbb{C}^{(I_{1} \times \cdots \times I_{N})\times (I_{1} \times \cdots \times I_{N})}$ be sectorial and $\mathcal{C}\in \mathbb{C}^{(I_{1} \times \cdots \times I_{N})\times (J_{1} \times \cdots \times J_{L})}$ with $|\textbf{J}|<|\textbf{I}|$  be a nonsingular tensor. Denote $\tilde{\mathcal{A}}=\mathcal{C}^{H} *_{N} \mathcal{A} *_{N}\mathcal{C}$. Then $\tilde{\mathcal{A}}$ is also sectorial and its phases satisfy Eq. \eqref{eq:nov26_1}.
\end{corollary}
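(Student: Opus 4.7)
The plan is to reduce this corollary directly to Theorem \ref{th3_2} by using a QR factorization to ``peel off'' the non-orthogonal part of $\mathcal{C}$ and absorb it into a congruence transformation, which by Lemma \ref{th2_4} leaves all phases unchanged. The paragraph immediately preceding the statement already hints at this reduction; my task is simply to organize it into a clean argument.

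First, I would invoke the Einstein-product QR factorization for the nonsingular tensor $\mathcal{C}$ from Refs. \cite{erfanifar2024polar,r12}, writing $\mathcal{C}=\mathcal{Q} *_{N} \mathcal{R}$, where $\mathcal{Q} \in \mathbb{C}^{(I_{1} \times \cdots \times I_{N})\times (J_{1} \times \cdots \times J_{L})}$ is column orthogonal (so $\mathcal{Q}^{H} *_{N} \mathcal{Q}=\mathcal{I}$) and $\mathcal{R} \in \mathbb{C}^{(J_{1} \times \cdots \times J_{L})\times (J_{1} \times \cdots \times J_{L})}$ is nonsingular and upper triangular. Then I would form the intermediate tensor $\hat{\mathcal{A}}=\mathcal{Q}^{H} *_{N} \mathcal{A} *_{N} \mathcal{Q}$, which is a bona fide compression of $\mathcal{A}$ in the sense of Definition \ref{def:compression}. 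Theorem \ref{th3_2} then directly gives that $\hat{\mathcal{A}}$ is sectorial and that its phases already satisfy the inequality \eqref{eq:nov26_1}.

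Next, I would relate $\hat{\mathcal{A}}$ and $\tilde{\mathcal{A}}$. Substituting the QR factorization gives $\tilde{\mathcal{A}}=\mathcal{C}^{H} *_{N} \mathcal{A} *_{N} \mathcal{C}=\mathcal{R}^{H} *_{N}\hat{\mathcal{A}}*_{N}\mathcal{R}$, which exhibits $\tilde{\mathcal{A}}$ as a congruence transformation of $\hat{\mathcal{A}}$ via the nonsingular tensor $\mathcal{R}$. Since $\mathcal{R}$ is nonsingular, the substitution $\mathcal{Y}=\mathcal{R}*_{L}\mathcal{X}$ is a bijection on the set of nonzero tensors, so $W'(\tilde{\mathcal{A}})=W'(\hat{\mathcal{A}})$; in particular $0\notin W(\tilde{\mathcal{A}})$, establishing sectoriality of $\tilde{\mathcal{A}}$. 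Applying Lemma \ref{th2_4} to the congruence $\tilde{\mathcal{A}}=\mathcal{R}^{H} *_{N}\hat{\mathcal{A}}*_{N}\mathcal{R}$ then yields $\Phi_{i}(\tilde{\mathcal{A}})=\Phi_{i}(\hat{\mathcal{A}})$ for every $i$, which transfers the inequality \eqref{eq:nov26_1} from $\hat{\mathcal{A}}$ to $\tilde{\mathcal{A}}$ verbatim.

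The argument is almost entirely a bookkeeping reduction, so I do not anticipate any genuine obstacle. The only mild subtlety is ensuring that the cited QR factorization under the Einstein product indeed delivers, for a nonsingular rectangular $\mathcal{C}$ with $|\textbf{J}|<|\textbf{I}|$, a column-orthogonal factor $\mathcal{Q}$ of the same outer shape as $\mathcal{C}$ together with a square, nonsingular, upper-triangular $\mathcal{R}$; this is guaranteed by \cite{erfanifar2024polar,r12} and is essentially the standard thin-QR construction transported to the Einstein-product setting. Once this is in hand, the corollary follows in three lines from Theorem \ref{th3_2} and Lemma \ref{th2_4}.
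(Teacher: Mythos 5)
Your proposal is correct and is essentially the same argument the paper gives in the paragraph preceding the corollary: a thin QR factorization $\mathcal{C}=\mathcal{Q}*_{N}\mathcal{R}$, identification of $\mathcal{Q}^{H}*_{N}\mathcal{A}*_{N}\mathcal{Q}$ as a compression to which Theorem \ref{th3_2} applies, and then transfer of sectoriality and phases to $\tilde{\mathcal{A}}=\mathcal{R}^{H}*_{N}(\mathcal{Q}^{H}*_{N}\mathcal{A}*_{N}\mathcal{Q})*_{N}\mathcal{R}$ via the congruence invariance of Lemma \ref{th2_4}. Your write-up is, if anything, slightly more careful than the paper's in explicitly noting that the congruence by the square nonsingular $\mathcal{R}$ preserves the angular numerical range and hence sectoriality.
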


Theorem \ref{th3_2} give us the restricted intervals of the phases of compressions. But when we choose $J_{1}=I_{1},\cdots,J_{L}=I_{L}$, that is $\mathcal{X}\in \mathbb{C}^{(I_{1} \times \cdots \times I_{N})\times (I_{1} \times \cdots \times I_{L})}$ ($L<N$), the following theorem further explains that the ``$\leq$'' sign can be taken as equal in some cases.

\begin{theorem}\label{thm:aug1_temp}
Let $\mathcal{A}\in \mathbb{C}^{(I_{1} \times \cdots \times I_{N})\times (I_{1} \times \cdots \times I_{N})}$ be sectorial, and $\mathcal{F}_{N,L}$ be the space of all nonsingular tensors in $\mathbb{C}^{(I_{1} \times \cdots \times I_{N})\times (I_{1} \times \cdots \times I_{L})}$. Then
\begin{equation}\label{eq3_2}
    \max_{\mathcal{X}\in \mathcal{F}_{N,L}} \sum_{i=1}^{I_{1}I_{2}\cdots I_{L}} \Phi_{i}(\mathcal{X}^{H} *_{N} \mathcal{A}  *_{N} \mathcal{X})=\sum_{i=1}^{I_{1}I_{2}\cdots I_{L}} \Phi_{i}(\mathcal{A} ),
\end{equation}
\begin{equation}\label{eq3_3}
    \min_{\mathcal{X}\in \mathcal{F}_{N,L}} \sum_{i=1}^{I_{1}I_{2}\cdots I_{L}} \Phi_{i}(\mathcal{X}^{H} *_{N} \mathcal{A}  *_{N} \mathcal{X})=\sum_{i=l}^{I_{1}I_{2}\cdots I_{N}} \Phi_{i}(\mathcal{A} ),
\end{equation}
where $l=1+\prod_{n=1}^{N}I_{n}-\prod_{n=1}^{L}I_{n}$. 
\end{theorem}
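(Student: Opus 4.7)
The plan is to reduce the theorem to the compression inequality already established in Theorem \ref{th3_2} (extended to nonsingular tensors via Corollary \ref{co3_2_1}), and then to exhibit specific nonsingular tensors $\mathcal{X}$ that attain the two extrema. With $|\textbf{J}|=\prod_{n=1}^{L}I_{n}$, for any $\mathcal{X}\in\mathcal{F}_{N,L}$ the compression $\tilde{\mathcal{A}}=\mathcal{X}^{H}*_{N}\mathcal{A}*_{N}\mathcal{X}$ is itself sectorial and satisfies
\begin{equation*}
\Phi_{i}(\mathcal{A})\;\ge\;\Phi_{i}(\tilde{\mathcal{A}})\;\ge\;\Phi_{i+|\textbf{I}|-|\textbf{J}|}(\mathcal{A}),\qquad 1\le i\le |\textbf{J}|,
\end{equation*}
by Corollary \ref{co3_2_1}. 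Summing the left inequality over $i=1,\ldots,|\textbf{J}|$ yields $\sum_{i=1}^{|\textbf{J}|}\Phi_{i}(\tilde{\mathcal{A}})\le \sum_{i=1}^{|\textbf{J}|}\Phi_{i}(\mathcal{A})$, which gives the ``$\le$'' direction of \eqref{eq3_2}. Summing the right inequality and shifting indices gives $\sum_{i=1}^{|\textbf{J}|}\Phi_{i}(\tilde{\mathcal{A}})\ge \sum_{i=l}^{|\textbf{I}|}\Phi_{i}(\mathcal{A})$, which is the ``$\ge$'' direction of \eqref{eq3_3}.

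It remains to construct maximizers and minimizers. I would start from the sectorial tensor decomposition of Theorem \ref{th2_3}, $\mathcal{A}=\mathcal{Q}^{H}*_{N}\mathcal{D}*_{N}\mathcal{Q}$, where, by Theorem \ref{thm:unique}, I may assume without loss of generality that the diagonal entries of the unitary diagonal tensor $\mathcal{D}$ are arranged in nonincreasing order of phase along a fixed enumeration of the multi-index $(i_{1},\ldots,i_{N})$. For the max, pick a ``selector'' tensor $\mathcal{E}\in\mathbb{C}^{(I_{1}\times\cdots\times I_{N})\times(I_{1}\times\cdots\times I_{L})}$ whose multi-indexed entries equal $1$ exactly on the $|\textbf{J}|$ multi-indices that label the $|\textbf{J}|$ largest phases of $\mathcal{D}$ and vanish otherwise; then $\mathcal{E}^{H}*_{N}\mathcal{E}$ is the identity and $\mathcal{E}^{H}*_{N}\mathcal{D}*_{N}\mathcal{E}$ is a diagonal unitary tensor whose phases are exactly $\Phi_{1}(\mathcal{A}),\ldots,\Phi_{|\textbf{J}|}(\mathcal{A})$. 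Setting $\mathcal{X}_{\max}=\mathcal{Q}^{-1}*_{N}\mathcal{E}$ makes $\mathcal{X}_{\max}^{H}*_{N}\mathcal{A}*_{N}\mathcal{X}_{\max}=\mathcal{E}^{H}*_{N}\mathcal{D}*_{N}\mathcal{E}$, whose sum of phases realizes $\sum_{i=1}^{|\textbf{J}|}\Phi_{i}(\mathcal{A})$. Nonsingularity of $\mathcal{X}_{\max}$ follows from the nonsingularity of $\mathcal{Q}^{-1}$ and the column orthogonality of $\mathcal{E}$, so $\mathcal{X}_{\max}\in\mathcal{F}_{N,L}$.

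The minimum in \eqref{eq3_3} is handled by the mirror construction: let $\mathcal{E}'$ be the selector that picks the $|\textbf{J}|$ multi-indices labelling the $|\textbf{J}|$ \emph{smallest} phases of $\mathcal{D}$, and set $\mathcal{X}_{\min}=\mathcal{Q}^{-1}*_{N}\mathcal{E}'$. Then $\mathcal{X}_{\min}^{H}*_{N}\mathcal{A}*_{N}\mathcal{X}_{\min}$ has phases $\Phi_{l}(\mathcal{A}),\ldots,\Phi_{|\textbf{I}|}(\mathcal{A})$, whose sum matches the right-hand side of \eqref{eq3_3}. Combined with the inequality direction above, equality is achieved in both \eqref{eq3_2} and \eqref{eq3_3}.

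The main technical obstacle will be the bookkeeping for the selector $\mathcal{E}$: since rows and columns of the tensor are themselves indexed by multi-indices $(i_{1},\ldots,i_{N})$ and $(j_{1},\ldots,j_{L})$, one has to specify a bijection between the chosen top (resp. bottom) $|\textbf{J}|$ diagonal multi-indices of $\mathcal{D}$ and the $|\textbf{J}|$ ``columns'' of $\mathcal{E}$, and verify via the Einstein product definition that $\mathcal{E}^{H}*_{N}\mathcal{D}*_{N}\mathcal{E}$ is indeed diagonal with the desired entries and $\mathcal{E}^{H}*_{N}\mathcal{E}=\mathcal{I}$; everything else is index-shifting and direct application of the already-proved tensor phase inequalities.
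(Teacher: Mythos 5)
Your proposal is correct and follows essentially the same route as the paper: both directions of the bound come from Corollary \ref{co3_2_1}, and the extrema are attained by composing the inverse of the congruence factor from the sectorial decomposition with a column-orthogonal ``selector'' tensor that picks out the multi-indices carrying the top (resp.\ bottom) $|\textbf{J}|$ phases of $\mathcal{D}$ (the paper's $\mathcal{I}^{0}$ plays exactly the role of your $\mathcal{E}$, after ordering the diagonal of $\mathcal{D}$). Your write-up is, if anything, slightly more careful about the index bookkeeping and the invocation of Theorem \ref{thm:unique} than the paper's own argument.
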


\begin{proof}
    For all nonsingular $\mathcal{X}\in \mathcal{F}_{N,L}$, applying Corollary \ref{co3_2_1} yields
\begin{multline*}
 \sum_{i=1}^{I_{1}I_{2}\cdots I_{L}} \Phi_{i}(\mathcal{X}^{H} *_{N} \mathcal{A} *_{N} \mathcal{X}) \leq 
 \sum_{i=1}^{I_{1}I_{2}\cdots I_{L}} \Phi_{i}(\mathcal{A}).  
\end{multline*}
Assume $\mathcal{A}$ has the sectorial tensor  decomposition $\mathcal{A}= \mathcal{T}^{H} *_{N} \mathcal{D} *_{N} \mathcal{T}$. Define a tensor $\mathcal{I}^{0} \in \mathbb{C}^{(I_{1} \times \cdots \times I_{L})\times (I_{1} \times \cdots \times I_{N})}$ as
\begin{equation*}\label{eq2_5_4}
(\mathcal{I}^{0})_{i_{1}\cdots i_{L}j_{1}\cdots j_{N}} =
\begin{cases}
1, & \text{if } (i_{1},\cdots, i_{L}) = (j_{1},\cdots, j_{L}) \\
  & \text{and } j_{L+1} = \cdots = j_{N} = 1 \\
0, & \text{others.}
\end{cases}
\end{equation*}

Let $\mathcal{X}=\mathcal{T}^{-1} *_{L} (\mathcal{I}^{0})^{H}$. In this case,  $\mathcal{X}$ is a nonsingular tensor. If the eigenvalues of $\mathcal{D}$ are arranged in the decreasing order, then $\sum_{i=1}^{I_{1}I_{2}\cdots I_{L}} \Phi_{i}(\mathcal{X}^{H} *_{N} \mathcal{A}  *_{N} \mathcal{X})= \sum_{i=1}^{I_{1}I_{2}\cdots I_{L}} \Phi_{i}(\mathcal{A})$ by calculation. This proves Eq. (\ref{eq3_2}). The proof of Eq. (\ref{eq3_3}) is similar.
\end{proof}

 
\subsection{Compound spectra and numerical ranges of sectorial tensors}\label{Compound numerical ranges}
In this subsection,  we define compound spectra and compound numerical range of tensors and products of tensors. They will be used  in the study of phases of  product and sum of sectorial tensors in Section \ref{Phases of tensor product and sum}.

\begin{lemma}[\cite{r6}]\label{th4_1}
Given two tensors $\mathcal{A}, \mathcal{B}\in \mathbb{C}^{(I_{1} \times \cdots \times I_{N})\times (I_{1} \times \cdots \times I_{N})}$,  the determinant of the Einstein product $\mathcal{A} *_{N} \mathcal{B}$ satisfies
\begin{equation}\label{eq4_1}
    det(\mathcal{A}*_{N} \mathcal{B})=det(\mathcal{A})det(\mathcal{B}).
\end{equation}
\end{lemma}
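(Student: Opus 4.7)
The plan is to reduce the statement to the classical matrix determinant multiplicativity by means of a canonical unfolding from even-order square tensors to matrices under which the Einstein product becomes ordinary matrix multiplication.

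First I would fix any bijection $\iota\colon [I_1]\times\cdots\times [I_N]\to [|\mathbf{I}|]$ (e.g., lexicographic) and define the unfolding map
\begin{equation*}
\phi\colon\mathbb{C}^{(I_1\times\cdots\times I_N)\times(I_1\times\cdots\times I_N)}\to \mathbb{C}^{|\mathbf{I}|\times|\mathbf{I}|},\quad \phi(\mathcal{A})_{\iota(i_1,\ldots,i_N),\,\iota(j_1,\ldots,j_N)}=a_{i_1\cdots i_N j_1\cdots j_N}.
\end{equation*}
A direct computation from the element-wise formula for $*_N$ shows $\phi(\mathcal{A}*_N \mathcal{B})=\phi(\mathcal{A})\phi(\mathcal{B})$, i.e., $\phi$ is an algebra homomorphism (in fact an isomorphism onto $\mathbb{C}^{|\mathbf{I}|\times|\mathbf{I}|}$). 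Vectorising tensors $\mathcal{X}\in\mathbb{C}^{I_1\times\cdots\times I_N}$ via the same bijection gives $\phi(\mathcal{A})\operatorname{vec}(\mathcal{X})=\operatorname{vec}(\mathcal{A}*_N\mathcal{X})$, so the eigenvalue equation $\mathcal{A}*_N\mathcal{X}=\lambda\mathcal{X}$ is equivalent to $\phi(\mathcal{A})\operatorname{vec}(\mathcal{X})=\lambda\operatorname{vec}(\mathcal{X})$. In particular $\mathcal{A}$ and $\phi(\mathcal{A})$ have the same eigenvalues (with multiplicities).

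Next, by Definition of $det(\mathcal{A})$ in the paper as the product of all $|\mathbf{I}|$ eigenvalues, and the fact that for a matrix $M\in\mathbb{C}^{|\mathbf{I}|\times|\mathbf{I}|}$ the classical determinant $\det(M)$ also equals the product of its eigenvalues counted with algebraic multiplicity, we get
\begin{equation*}
det(\mathcal{A})=\det(\phi(\mathcal{A})),\qquad det(\mathcal{B})=\det(\phi(\mathcal{B})),\qquad det(\mathcal{A}*_N\mathcal{B})=\det(\phi(\mathcal{A}*_N\mathcal{B})).
\end{equation*}
Combining the homomorphism property with the Cauchy--Binet/multiplicativity identity for matrix determinants,
\begin{equation*}
det(\mathcal{A}*_N\mathcal{B})=\det(\phi(\mathcal{A})\phi(\mathcal{B}))=\det(\phi(\mathcal{A}))\det(\phi(\mathcal{B}))=det(\mathcal{A})\,det(\mathcal{B}),
\end{equation*}
which is exactly the desired identity.

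The only subtlety I anticipate is bookkeeping: one must verify that the summation indices in the Einstein product $(\mathcal{A}*_N\mathcal{B})_{i_1\cdots i_N j_1\cdots j_N}=\sum_{k_1,\ldots,k_N} a_{i_1\cdots i_N k_1\cdots k_N}b_{k_1\cdots k_N j_1\cdots j_N}$ correspond exactly to a single summation $\sum_{k=1}^{|\mathbf{I}|}\phi(\mathcal{A})_{\iota(\mathbf{i}),k}\phi(\mathcal{B})_{k,\iota(\mathbf{j})}$ under the chosen bijection, and that the counting-multiplicity convention for tensor eigenvalues used in the definition of $det$ matches that of the unfolded matrix. Both are straightforward once the bijection is fixed, so there is no substantive obstacle; the proof is essentially a translation through $\phi$.
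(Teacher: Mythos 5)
The paper does not prove this lemma itself; it quotes it from reference \cite{r6} and relies on it as a known fact. Your proof is correct and is exactly the argument one would expect: the unfolding map $\phi$ you construct is the same isomorphism the paper later introduces in Eq.~\eqref{chen2024phase} and Lemma~\ref{lem:isomorphism} (which records precisely the two properties you need, $\phi(\mathcal{A}*_N\mathcal{B})=\phi(\mathcal{A})\phi(\mathcal{B})$ and preservation of eigenvalues), and since the paper defines $det(\mathcal{A})$ as the product of the eigenvalues, the identity reduces to ordinary matrix determinant multiplicativity. The only point worth being explicit about, which you already flag, is that the paper's eigenvalue/determinant convention implicitly counts eigenvalues with algebraic multiplicity so that $det(\mathcal{A})=\det(\phi(\mathcal{A}))$; with that noted, your argument is complete.
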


In the following, we give the definition of the $k$-th compound spectrum and $k$-th compound numerical range of square tensors.

\begin{definition}\label{def:compound spectrum}
For each $k \in  [|\textbf{I}|]$, the $k$-th compound spectrum of the  tensor $\mathcal{A} \in \mathbb{C}^{(I_{1} \times \cdots \times I_{N})\times (I_{1} \times \cdots \times I_{N})}$ is defined as 
\begin{multline*}
\Lambda_{(k)}(\mathcal{A}) = \left\{ \prod_{m=1}^{k}\lambda_{i_{m}}(\mathcal{A}) : 
1 \leq i_{1} < \cdots < i_{k} \leq |\textbf{I}| \right\}.
\end{multline*}
\end{definition}

\begin{definition}[\cite{r7}]\label{def:compound numerical range}
For each $k \in  [|\textbf{I}|]$, the $k$-th compound numerical range of the tensor $\mathcal{A} \in \mathbb{C}^{(I_{1} \times \cdots \times I_{N})\times (I_{1} \times \cdots \times I_{N})}$ is defined as
\begin{align*}
W_{(k)}(\mathcal{A})=& \left\{ \prod_{m=1}^{k}\lambda_{m}(\tilde{\A}): \tilde{\A} \in \mathbb{C}^{(J_1 \times \cdots \times J_L) \times (J_1 \times \cdots \times J_L)} \right. \\
 & \hspace{0ex} \left.    {\rm is~ an~ arbitrary~ compression~ of~ \A ~{\rm with}~ |\textbf{J}| = k} \right\}.
\end{align*}
\end{definition}

Similarly, one can also define the $k$-th compound angular numerical range. 
\begin{definition}[\cite{r7}]
For each $k \in  [|\textbf{I}|]$, the $k$-th compound angular numerical range of the  tensor $\mathcal{A} \in \mathbb{C}^{(I_{1} \times \cdots \times I_{N})\times (I_{1} \times \cdots \times I_{N})}$ is defined as
\begin{align*}
W_{(k)}'(\mathcal{A})=& \left\{ \prod_{m=1}^{k}\lambda_{m}(\tilde{\A}): \tilde{\A}=\mathcal{X}^{H}*_{N}\A*_{N}\mathcal{X},\right. \\  &\mathcal{X}\in \mathbb{C}^{(I_1 \times \cdots \times I_N) \times (J_1 \times \cdots \times J_L)} \\ & \hspace{0ex} \left.    {\rm is~ an ~arbitrary~ nonsingular~ tensor~ with}~ |\textbf{J}| = k \right\}.
\end{align*}

\end{definition}

We define the product and quotient of two sets as follows.
$$W' _{(k)}(\mathcal{A})W' _{(k)}(\mathcal{B})=\left\{ab,\ a\in W' _{(k)}(\mathcal{A}), b\in  W' _{(k)}(\mathcal{B})\right\},$$
$$W_{(k)}(\mathcal{A})/W_{(k)}(\mathcal{B})=\left\{\frac{a}{b},\ a\in W_{(k)}(\mathcal{A}), b\in  W_{(k)}(\mathcal{B})\right\}.$$

\begin{theorem}\label{th4_2}
Given tensors $\mathcal{A}, \mathcal{B}\in \mathbb{C}^{(I_{1} \times \cdots \times I_{N})\times (I_{1} \times \cdots \times I_{N})}$ with $\mathcal{B}$ being sectorial,  we have
\begin{equation*}
    \Lambda_{(k)}(\mathcal{A} *_{N} \mathcal{B}^{-1}) \subseteq W_{(k)}(\mathcal{A})/W_{(k)}(\mathcal{B}), \quad k \in  [|\textbf{I}|].
\end{equation*}
\end{theorem}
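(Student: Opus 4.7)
The plan is to show that every $\lambda = \prod_{m=1}^{k}\lambda_{i_m} \in \Lambda_{(k)}(\mathcal{A} *_N \mathcal{B}^{-1})$ arises as $\det(\tilde{\mathcal{A}})/\det(\tilde{\mathcal{B}})$ for a matched pair of size-$k$ compressions built from a common column-orthogonal tensor; since any size-$k$ compression is even-order square and its $k$ eigenvalues multiply to its determinant (Definitions \ref{def:rank} and \ref{def:compound numerical range}), this immediately places $\lambda$ inside $W_{(k)}(\mathcal{A})/W_{(k)}(\mathcal{B})$.

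First I would set $\mathcal{T} := \mathcal{B}^{-1}*_N\mathcal{A}$ and note that $\mathcal{A}*_N\mathcal{B}^{-1} = \mathcal{B}*_N\mathcal{T}*_N\mathcal{B}^{-1}$, so by Lemma \ref{th2_5} the values $\lambda_{i_1},\ldots,\lambda_{i_k}$ are also eigenvalues of $\mathcal{T}$. I would then extract a $k$-dimensional subspace $\mathcal{M}\subset\mathbb{C}^{I_{1}\times\cdots\times I_{N}}$ that is $\mathcal{T}$-invariant and on which $\mathcal{T}$ has spectrum $\{\lambda_{i_1},\ldots,\lambda_{i_k}\}$ counted with multiplicity. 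When $\mathcal{T}$ is diagonalizable, $\mathcal{M}$ is just the span of $k$ eigentensors; in general, $\mathcal{M}$ comes from a tensor Schur triangularization of $\mathcal{T}$ with the chosen eigenvalues placed first on the diagonal. Orthonormalizing a basis of $\mathcal{M}$ by the tensor QR factorization of \cite{erfanifar2024polar,r12} produces a column-orthogonal $\mathcal{U} \in \mathbb{C}^{(I_1\times\cdots\times I_N)\times(J_1\times\cdots\times J_L)}$ with $|\mathbf{J}|=k$ such that $\mathcal{T}*_N\mathcal{U} = \mathcal{U}*_L \mathcal{R}_k$, where $\mathcal{R}_k$ is upper triangular with diagonal entries $\lambda_{i_1},\ldots,\lambda_{i_k}$.

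Left-multiplying $\mathcal{T}*_N\mathcal{U} = \mathcal{U}*_L\mathcal{R}_k$ by $\mathcal{B}$ gives $\mathcal{A}*_N\mathcal{U} = \mathcal{B}*_N\mathcal{U}*_L\mathcal{R}_k$, and then left-multiplying by $\mathcal{U}^H$ yields the key identity
\begin{equation*}
\tilde{\mathcal{A}} = \tilde{\mathcal{B}}*_L\mathcal{R}_k, \qquad \tilde{\mathcal{A}}:=\mathcal{U}^H*_N\mathcal{A}*_N\mathcal{U},\ \tilde{\mathcal{B}}:=\mathcal{U}^H*_N\mathcal{B}*_N\mathcal{U}.
\end{equation*}
Because $\mathcal{B}$ is sectorial, Theorem \ref{th3_2} ensures that $\tilde{\mathcal{B}}$ is sectorial and hence nonsingular. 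Applying Lemma \ref{th4_1} to the identity above gives $\det(\tilde{\mathcal{A}}) = \det(\tilde{\mathcal{B}})\cdot\det(\mathcal{R}_k) = \det(\tilde{\mathcal{B}})\cdot\prod_{j=1}^{k}\lambda_{i_j}$, so that $\lambda = \det(\tilde{\mathcal{A}})/\det(\tilde{\mathcal{B}}) \in W_{(k)}(\mathcal{A})/W_{(k)}(\mathcal{B})$, as desired.

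The main obstacle is the existence of the invariant subspace $\mathcal{M}$ with a prescribed restricted spectrum when $\mathcal{T}$ fails to be diagonalizable, which requires a tensor analog of the Schur decomposition — obtainable in principle by unfolding $\mathcal{T}$ to a matrix, applying the matrix Schur form with a chosen diagonal order, and reshaping back. If one wishes to avoid invoking a tensor Schur result directly, a density argument suffices: perturb $\mathcal{A}$ so that $\mathcal{A}*_N\mathcal{B}^{-1}$ has simple spectrum, carry out the diagonalizable version of the construction above, and then pass to the limit, using that the map $(\mathcal{A},\mathcal{U})\mapsto\det(\mathcal{U}^H*_N\mathcal{A}*_N\mathcal{U})$ is continuous and the set of column-orthogonal compressing tensors is compact, so $W_{(k)}(\mathcal{A})/W_{(k)}(\mathcal{B})$ is closed.
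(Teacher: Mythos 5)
Your proposal is correct and follows essentially the same route as the paper: compress $\mathcal{A}$ and $\mathcal{B}$ by a common column-orthogonal tensor built from eigentensors of the product, relate the two compressions by a factor whose determinant is $\prod_{m=1}^{k}\lambda_{i_m}$, invoke the multiplicativity of the determinant (Lemma \ref{th4_1}), and fall back on a continuity/density argument when $\mathcal{A}*_{N}\mathcal{B}^{-1}$ is not diagonalizable. The only differences are cosmetic --- you use right invariant subspaces of $\mathcal{B}^{-1}*_{N}\mathcal{A}$ with a QR orthonormalization where the paper uses left eigentensors of $\mathcal{A}*_{N}\mathcal{B}^{-1}$ with a polar decomposition, and your compactness justification for the limiting step is, if anything, slightly more careful than the paper's.
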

\begin{proof}
    We first assume $\mathcal{A} *_{N} \mathcal{B}^{-1}$ is diagonalizable, that is $\mathcal{A} *_{N} \mathcal{B}^{-1}$ has $|\textbf{I}|$ eigentensors. In this case, we choose $1 \leq i_{1}<i_{2}<\cdots<i_{k} \leq |\textbf{I}|$, and let
    $D=diag\{\lambda_{i_{1}}(\mathcal{A} *_{N} \mathcal{B}^{-1}),\lambda_{i_{2}}(\mathcal{A} *_{N} \mathcal{B}^{-1}),\cdots,\lambda_{i_{k}}(\mathcal{A} *_{N} \mathcal{B}^{-1})\}\in\mathbb{C}^{k\times k}$. Let $\mathcal{X}\in \mathbb{C}^{(I_{1} \times \cdots \times I_{N})\times (k)}$  be the tensor composed of corresponding eigentensors such that
\begin{equation*}
    \mathcal{X}^{H} *_{N} (\mathcal{A} *_{N} \mathcal{B}^{-1}) =D *_{1} \mathcal{X}^{H}.
\end{equation*}
Note that $\mathcal{X}$ has tensor polar decomposition $\mathcal{X}=\mathcal{U} *_{1} P$; cf.\cite{erfanifar2024polar,r12}, where $\mathcal{U}\in \mathbb{C}^{(I_{1} \times \cdots \times I_{N})\times (k)}$ is a column orthogonal tensor and $P \in \mathbb{C}^{k\times k}$ is a positive definite matrix. Consequently,
\begin{equation*}
    \mathcal{U}^{H} *_{N} (\mathcal{A} *_{N} \mathcal{B}^{-1}) =P^{-H} *_{1} D *_{1} P^{H} *_{1} \mathcal{U}^{H}.
\end{equation*}
Post-multiplying both sides by $\mathcal{B} *_{N} \mathcal{U}$, and we get
\begin{equation}\label{eq4}
    \mathcal{U}^{H} *_{N} \mathcal{A} *_{N} \mathcal{U} =P^{-H} *_{1} D *_{1} P^{H} *_{1} (\mathcal{U}^{H} *_{N} \mathcal{B} *_{N} \mathcal{U}).
\end{equation}
Then take the determinants of (\ref{eq4}), by Lemma \ref{th4_1}, we can obtain
\begin{multline*}
    \prod_{i=1}^{k}\lambda_{i}(\mathcal{U}^{H} *_{N} \mathcal{A} *_{N} \mathcal{U}) = \\
    \prod_{m=1}^{k}\lambda_{i_m}(\mathcal{A} *_{N} \mathcal{B}^{-1})\prod_{i=1}^{k}\lambda_{i}(\mathcal{U}^{H} *_{N} \mathcal{B} *_{N} \mathcal{U}).
\end{multline*}
The claim follows by dividing both sides by $\prod_{i=1}^{k}\lambda_{i}(\mathcal{U}^{H} *_{N} \mathcal{B} *_{N} \mathcal{U})$. Then we finish the proof for the diagonalizable case.

When  $\mathcal{A} *_{N} \mathcal{B}^{-1}$ is not diagonalizable, we choose a sequence $\{\mathcal{A}_{i} \}$ with limit $\mathcal{A}$ such that for all $i,\ \mathcal{A}_{i} *_{N} \mathcal{B}^{-1}$ is diagonalizable. It follows that
\begin{equation*}
    \Lambda_{(k)}(\mathcal{A}_{i} *_{N} \mathcal{B}^{-1}) \subseteq W_{(k)}(\mathcal{A}_{i})/W_{(k)}(\mathcal{B}).
\end{equation*}

Give a column orthogonal tensor  $\mathcal{X} \in\mathbb{C}^{(I_{1} \times \cdots \times I_{N})\times (J_{1} \times \cdots \times J_{L})}$ and $ |\textbf{J}|=k$. By the continuity of eigenvalues, sending $i \rightarrow\infty$ yields $\prod_{i=1}^{k}\lambda_{i}(\mathcal{X}^{H} *_{N} \mathcal{A}_{i} *_{N} \mathcal{X}) \rightarrow \prod_{i=1}^{k}\lambda_{i}(\mathcal{X}^{H} *_{N} \mathcal{A} *_{N} \mathcal{X})$, and thus $W_{(k)}(\mathcal{A}_{i}) \rightarrow W_{(k)}(\mathcal{A})$. Similarly, $\Lambda_{(k)}(\mathcal{A}_{i} *_{N} \mathcal{B}^{-1})  \rightarrow \Lambda_{(k)}(\mathcal{A} *_{N} \mathcal{B}^{-1})$. Therefore, in th case that $\mathcal{A} *_{N} \mathcal{B}^{-1}$ is not diagonalizable, the result also holds.
\end{proof}

Note that when $\mathcal{B}=\mathcal{I}$, $W_{(k)}(\mathcal{B})$ has only one element, i.e., $W_{(k)}(\mathcal{B})=\{ 1 \}$. For this case, we have the relationship between the $k$-th compound spectrum and the $k$-th compound numerical range.
\begin{corollary}\label{co4_2}
For each $k \in  [|\textbf{I}|]$, the $k$-th compound numerical range of the tensor  $\mathcal{A}$ satisfies $  \Lambda_{(k)}(\mathcal{A} ) \subseteq W_{(k)}(\mathcal{A})$.
\end{corollary}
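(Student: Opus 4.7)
The plan is to derive Corollary \ref{co4_2} as a direct specialization of Theorem \ref{th4_2}, with $\mathcal{B}$ chosen to be the identity tensor $\mathcal{I}$. The whole point of the corollary is to isolate the information that Theorem \ref{th4_2} already contains when the denominator set collapses to a single point, so the work is essentially to verify that the identity tensor is an admissible choice for $\mathcal{B}$ and that its $k$-th compound numerical range reduces to $\{1\}$.

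First I would check that $\mathcal{I}$ satisfies the hypothesis of Theorem \ref{th4_2}, namely that it is sectorial. This is immediate from Definition \ref{def:sectorial}: for any unit tensor $\mathcal{X}$, we have $\langle \mathcal{I}*_N \mathcal{X},\mathcal{X}\rangle = \langle \mathcal{X},\mathcal{X}\rangle = 1$, so $W(\mathcal{I})=\{1\}$ and in particular $0\notin W(\mathcal{I})$. Moreover, $\mathcal{I}^{-1} = \mathcal{I}$, hence $\mathcal{A}*_N \mathcal{I}^{-1}=\mathcal{A}$.

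Next I would compute $W_{(k)}(\mathcal{I})$ from Definition \ref{def:compound numerical range}. For any column orthogonal tensor $\mathcal{U}$ with $|\textbf{J}|=k$, the compression of $\mathcal{I}$ is $\mathcal{U}^H *_N \mathcal{I} *_N \mathcal{U} = \mathcal{U}^H *_N \mathcal{U} = \mathcal{I}_k$, the identity tensor on $\mathbb{C}^{J_1\times\cdots\times J_L}$. Every eigenvalue of $\mathcal{I}_k$ equals $1$, so $\prod_{m=1}^{k}\lambda_m(\mathcal{I}_k)=1$, and consequently $W_{(k)}(\mathcal{I})=\{1\}$. It follows from the definition of the set quotient that $W_{(k)}(\mathcal{A})/W_{(k)}(\mathcal{I}) = W_{(k)}(\mathcal{A})$.

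Finally, applying Theorem \ref{th4_2} with $\mathcal{B}=\mathcal{I}$ yields
\[
\Lambda_{(k)}(\mathcal{A}) = \Lambda_{(k)}(\mathcal{A}*_N \mathcal{I}^{-1}) \subseteq W_{(k)}(\mathcal{A})/W_{(k)}(\mathcal{I}) = W_{(k)}(\mathcal{A}),
\]
which is exactly the claim. There is no real obstacle here beyond the two easy verifications above; the substantive content of the corollary is already packed into Theorem \ref{th4_2}, and the proof amounts to observing that the identity tensor plays the role of a neutral element for the quotient in the compound setting.
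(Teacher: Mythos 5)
Your proof is correct and follows exactly the route the paper intends: the text immediately preceding the corollary observes that for $\mathcal{B}=\mathcal{I}$ one has $W_{(k)}(\mathcal{I})=\{1\}$, so the corollary is Theorem \ref{th4_2} specialized to $\mathcal{B}=\mathcal{I}$. Your added verifications (that $\mathcal{I}$ is sectorial and that every compression of $\mathcal{I}$ is again an identity tensor) are accurate and simply make explicit what the paper leaves implicit.
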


Theorem \ref{th4_2} tells us the compound spectra of $\Lambda_{(k)}(\mathcal{A} *_{N} \mathcal{B}^{-1})$.  The next theorem gives those of $\Lambda_{(k)}(\mathcal{A} *_{N} \mathcal{B})$.

\begin{theorem}\label{th4_3}
Let $\mathcal{A}, \mathcal{B}\in \mathbb{C}^{(I_{1} \times \cdots \times I_{N})\times (I_{1} \times \cdots \times I_{N})}$ be two tensors, and $\mathcal{B}$ be sectorial. Then
\begin{equation*}
    \Lambda_{(k)}(\mathcal{A} *_{N} \mathcal{B}) \subseteq W' _{(k)}(\mathcal{A})W' _{(k)}(\mathcal{B}).
\end{equation*}
\end{theorem}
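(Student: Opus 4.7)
The plan is to reduce Theorem \ref{th4_3} to Theorem \ref{th4_2} by applying that theorem with $\mathcal{B}$ replaced by $\mathcal{B}^{-1}$, and then to convert the resulting denominator in $W_{(k)}(\mathcal{B}^{-1})$ into an honest element of $W'_{(k)}(\mathcal{B})$. The flexibility afforded by nonsingular (rather than only column-orthogonal) compression tensors in the definition of $W'_{(k)}$ is precisely what enables this ``inversion'' step.

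First I would verify the auxiliary fact that $\mathcal{B}^{-1}$ is sectorial whenever $\mathcal{B}$ is. After rotating $\mathcal{B}$ so that $\mathrm{Re}\langle\mathcal{B} *_N \mathcal{Y},\mathcal{Y}\rangle > 0$ for every nonzero $\mathcal{Y}$, the substitution $\mathcal{Y} = \mathcal{B}^{-1} *_N \mathcal{X}$ yields the identity $\langle\mathcal{B}^{-1}*_N\mathcal{X},\mathcal{X}\rangle = \overline{\langle\mathcal{B}*_N\mathcal{Y},\mathcal{Y}\rangle}$, so $\mathrm{Re}\langle\mathcal{B}^{-1}*_N\mathcal{X},\mathcal{X}\rangle > 0$ for every nonzero $\mathcal{X}$ and therefore $0 \notin W(\mathcal{B}^{-1})$. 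Theorem \ref{th4_2} applied to the pair $(\mathcal{A},\mathcal{B}^{-1})$ then gives
\[
\Lambda_{(k)}(\mathcal{A} *_N \mathcal{B}) = \Lambda_{(k)}(\mathcal{A} *_N (\mathcal{B}^{-1})^{-1}) \subseteq W_{(k)}(\mathcal{A})/W_{(k)}(\mathcal{B}^{-1}),
\]
so any $\mu \in \Lambda_{(k)}(\mathcal{A}*_N\mathcal{B})$ can be written as $\mu = a/c$, where $a = \det(\mathcal{U}^H *_N \mathcal{A} *_N \mathcal{U}) \in W_{(k)}(\mathcal{A}) \subseteq W'_{(k)}(\mathcal{A})$ and $c = \det(\mathcal{U}^H *_N \mathcal{B}^{-1} *_N \mathcal{U})$, for some column-orthogonal $\mathcal{U}$ with $k$ columns.

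The main task---and what I expect to be the real work---is to exhibit $1/c$ as an element of $W'_{(k)}(\mathcal{B})$. My candidate is the nonsingular tensor $\mathcal{Z} = \mathcal{B}^{-1} *_N \mathcal{U} *_1 M$ for an invertible $k\times k$ matrix $M$ to be chosen. Since $\mathcal{B} *_N \mathcal{Z} = \mathcal{U} *_1 M$, a direct computation gives
\[
\det(\mathcal{Z}^H *_N \mathcal{B} *_N \mathcal{Z}) = |\det M|^2\det(\mathcal{U}^H *_N \mathcal{B}^{-H} *_N \mathcal{U}) = |\det M|^2\,\overline{c}.
\]
Sectorialness forces $c \neq 0$, since the eigenvalues of $\mathcal{U}^H *_N \mathcal{B}^{-1} *_N \mathcal{U}$ lie in $W(\mathcal{B}^{-1})$, which is bounded away from the origin. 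Taking $M = \alpha I_k$ with $|\alpha|^k = 1/|c|$ then produces $\det(\mathcal{Z}^H *_N \mathcal{B} *_N \mathcal{Z}) = \overline{c}/|c|^2 = 1/c$. Because $\mathcal{B}^{-1}$ is invertible, $\mathcal{U}$ has rank $k$, and $M$ is invertible, $\mathcal{Z}$ is a nonsingular tensor, so $1/c \in W'_{(k)}(\mathcal{B})$, and hence $\mu = a \cdot (1/c) \in W'_{(k)}(\mathcal{A}) \cdot W'_{(k)}(\mathcal{B})$, as required.
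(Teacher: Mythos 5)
Your proposal is correct and follows essentially the same route as the paper: apply Theorem \ref{th4_2} with $\mathcal{B}$ replaced by $\mathcal{B}^{-1}$ and then show $1/W_{(k)}(\mathcal{B}^{-1})\subseteq W'_{(k)}(\mathcal{B})$ by passing to $\mathcal{B}^{-1}*_N\mathcal{U}$, conjugating, and absorbing the resulting positive scalar into a rescaling of the nonsingular compression tensor. Your write-up is in fact slightly more careful than the paper's, since you explicitly verify that $\mathcal{B}^{-1}$ is sectorial (needed to invoke Theorem \ref{th4_2}) and make the positive-scaling step explicit via the matrix $M$.
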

\begin{proof}
    By Theorem \ref{th4_1}, we have $\Lambda_{(k)}(\mathcal{A} *_{N} \mathcal{B}^{-1}) \subseteq W_{(k)}(\mathcal{A})/W_{(k)}(\mathcal{B})$.  Hence we only need to prove that
\begin{equation}\label{eq4_2}
    1/W_{(k)}(\mathcal{B}^{-1}) \subseteq W' _{(k)}(\mathcal{B}).
\end{equation}   
Let $c \in 1/W_{(k)}(\mathcal{B}^{-1})$. By Definition \ref{def:compound numerical range}, there exists a column orthogonal tensor $\mathcal{X} \in\mathbb{C}^{(I_{1} \times \cdots \times I_{N})\times (J_{1} \times \cdots \times J_{L})}$ with $ |\textbf{J}|=k$, such that
\begin{multline*}
c = \prod_{i=1}^{k}\frac{1}{\lambda_{i}(\mathcal{X}^{H} *_{N} \mathcal{B}^{-1} *_{N} \mathcal{X})} = \\
\prod_{i=1}^{k}\frac{1}{\lambda_{i}((\mathcal{B}^{-1} *_{N} \mathcal{X})^{H} *_{N} \mathcal{B}^{H} *_{N} (\mathcal{B}^{-1} *_{N} \mathcal{X}))}.
\end{multline*} 
Let $\mathcal{Y}=\mathcal{B}^{-1} *_{N} \mathcal{X}$. Then $\mathcal{Y}$ is a nonsingular tensor. Noting that $c=\frac{|c|^{2}}{\bar{c}}$, we have,
\begin{multline*}
c = |c|^{2}\prod_{i=1}^{k}\lambda_{i}(\mathcal{Y}^{H} *_{N} \mathcal{B}^{H} *_{N} \mathcal{Y})^{H} \\
= |c|^{2}\prod_{i=1}^{k}\lambda_{i}(\mathcal{Y}^{H} *_{N} \mathcal{B} *_{N} \mathcal{Y}) \in W' _{(k)}(\mathcal{B}).
\end{multline*}
Therefore, we establish Eq. (\ref{eq4_2}).
\end{proof}

\subsection{Phases of  product and sum of sectorial tensors}\label{Phases of tensor product and sum}
 In this subsection, we  derive the relationship between the phases of  $\mathcal{A} *_{N} \mathcal{B}$ and those of sectorial tensors $\mathcal{A} $ and $ \mathcal{B}$. To begin, we bring in the definition of majorized vectors to measure the size of a vector.
\begin{definition}[\cite{r2}]
Let $x,y \in \mathbb{R}^{n}$ be two vectors. The elements in vector $x$ are arranged  from the largest to the smallest as $x_{1},x_{2},\cdots,x_{n}$.  The elements in vector $y$ are arranged  in the same way. Then, $x$ is said to be majorized by $y$, denoted by $x \prec y$, if
\begin{equation*}
\sum_{i=1}^{k}x_{i} \leq \sum_{i=1}^{k}y_{i},\  k=1,2,\cdots,n-1, \ \text{and}\ \sum_{i=1}^{n}x_{i} = \sum_{i=1}^{n}y_{i}.
\end{equation*}
\end{definition}

Define $\gamma(\mathcal{A}):=\frac{\bar{\Phi}(\mathcal{A})+\underline{\Phi}(\mathcal{A})}{2} \in (-\pi,\pi]$, and call it the {\it phase center} of the sectorial tensor $\mathcal{A}$

\begin{theorem}\label{th5_1}
Let $\mathcal{A}, \mathcal{B}\in \mathbb{C}^{(I_{1} \times \cdots \times I_{N})\times (I_{1} \times \cdots \times I_{N})}$ be two sectorial tensors. If $
\angle \lambda(\mathcal{A} *_{N} \mathcal{B})$ takes values in $(\gamma(\mathcal{A})+\gamma(\mathcal{B})-\pi,\  \gamma(\mathcal{A})+\gamma(\mathcal{B})+\pi)$, 
then
\begin{equation}\label{eq5_1}
    \angle \lambda(\mathcal{A} *_{N} \mathcal{B}) \prec \Phi(\mathcal{A})+\Phi(\mathcal{B}).
\end{equation}
\end{theorem}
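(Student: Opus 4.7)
The plan is to establish the majorization by proving the top-$k$ partial sum inequality for each $k\in\{1,\ldots,|\textbf{I}|-1\}$ and the overall sum equality for $k=|\textbf{I}|$. I would first order the eigenvalues of $\mathcal{A}*_{N}\mathcal{B}$ so that their arguments, selected from the strip guaranteed by the hypothesis, are nonincreasing: $\angle\lambda_{1}(\mathcal{A}*_{N}\mathcal{B})\geq\cdots\geq\angle\lambda_{|\textbf{I}|}(\mathcal{A}*_{N}\mathcal{B})$.

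For each fixed $k$, I would apply the compound-set containment of Theorem \ref{th4_3} to produce nonsingular tensors $\mathcal{X}_{A},\mathcal{X}_{B}\in\mathbb{C}^{(I_{1}\times\cdots\times I_{N})\times(J_{1}\times\cdots\times J_{L})}$ with $|\textbf{J}|=k$ such that
$$\prod_{m=1}^{k}\lambda_{m}(\mathcal{A}*_{N}\mathcal{B})\;=\;\det(\tilde{\mathcal{A}}_{k})\,\det(\tilde{\mathcal{B}}_{k}),$$
where $\tilde{\mathcal{A}}_{k}:=\mathcal{X}_{A}^{H}*_{N}\mathcal{A}*_{N}\mathcal{X}_{A}$ and $\tilde{\mathcal{B}}_{k}:=\mathcal{X}_{B}^{H}*_{N}\mathcal{B}*_{N}\mathcal{X}_{B}$ are themselves sectorial by Corollary \ref{co3_2_1}. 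Inserting the sectorial decomposition of each compression gives $\det(\tilde{\mathcal{A}}_{k})=|\det(\mathcal{Q}_{A})|^{2}\prod_{i}e^{\imath\Phi_{i}(\tilde{\mathcal{A}}_{k})}$, and similarly for $\tilde{\mathcal{B}}_{k}$. Taking arguments on both sides of the identity then yields
$$\sum_{m=1}^{k}\angle\lambda_{m}(\mathcal{A}*_{N}\mathcal{B})\;\equiv\;\sum_{i=1}^{k}\Phi_{i}(\tilde{\mathcal{A}}_{k})+\sum_{i=1}^{k}\Phi_{i}(\tilde{\mathcal{B}}_{k})\pmod{2\pi}.$$
The phase monotonicity of Corollary \ref{co3_2_1}, namely $\Phi_{i}(\tilde{\mathcal{A}}_{k})\leq\Phi_{i}(\mathcal{A})$ and $\Phi_{i}(\tilde{\mathcal{B}}_{k})\leq\Phi_{i}(\mathcal{B})$, bounds the right side above by $\sum_{i=1}^{k}\Phi_{i}(\mathcal{A})+\sum_{i=1}^{k}\Phi_{i}(\mathcal{B})$, which delivers the partial-sum inequality. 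For the full-sum equality, Lemma \ref{th4_1} gives $\det(\mathcal{A}*_{N}\mathcal{B})=\det(\mathcal{A})\det(\mathcal{B})$, and the sectorial decompositions of $\mathcal{A},\mathcal{B}$ identify $\angle\det(\mathcal{A})\equiv\sum_{i}\Phi_{i}(\mathcal{A})$ and $\angle\det(\mathcal{B})\equiv\sum_{i}\Phi_{i}(\mathcal{B})$ modulo $2\pi$, so the two $n$-sums also coincide modulo $2\pi$.

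The main obstacle is turning the $2\pi$-congruences above into genuine (in)equalities. The hypothesis places each $\angle\lambda_{m}(\mathcal{A}*_{N}\mathcal{B})$ in the open strip $(\gamma(\mathcal{A})+\gamma(\mathcal{B})-\pi,\,\gamma(\mathcal{A})+\gamma(\mathcal{B})+\pi)$, while sectorialness confines every $\Phi_{i}(\tilde{\mathcal{A}}_{k})$ to an arc of length less than $\pi$ centered at $\gamma(\mathcal{A})$, and similarly for $\tilde{\mathcal{B}}_{k}$. I plan to rotate by $e^{-\imath(\gamma(\mathcal{A})+\gamma(\mathcal{B}))}$ to normalize both phase centers to zero, so that each term $\angle\lambda_{m}(e^{-\imath\gamma(\mathcal{A})}\mathcal{A}*_{N}e^{-\imath\gamma(\mathcal{B})}\mathcal{B})$ lies in $(-\pi,\pi)$ and each corresponding phase lies in $(-\pi/2,\pi/2)$; this pins down the correct branch of the argument termwise and upgrades the two displayed congruences to real-valued (in)equalities, at which point the majorization follows.
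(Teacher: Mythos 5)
Your proposal follows essentially the same route as the paper's proof: rotate by the phase centers to normalize $\gamma(\mathcal{A})=\gamma(\mathcal{B})=0$, invoke Theorem \ref{th4_3} to express the product of the top $k$ eigenvalues of $\mathcal{A}*_{N}\mathcal{B}$ through determinants of sectorial compressions, bound the compression phases via Corollary \ref{co3_2_1}, and obtain the full-sum equality from Lemma \ref{th4_1} and the sectorial decomposition. The one point to flag is your claim that the normalization ``pins down the correct branch'' of the partial sums: for $k\ge 2$ both sides of your congruence lie in an interval of length $2k\pi$, which contains several residues mod $2\pi$, so congruence alone does not force equality; the paper's proof asserts the same equality of angle sums without comment, so this is a shared subtlety rather than a defect you introduced, but a fully rigorous argument would need to track the argument continuously (e.g., along a homotopy of the compressions) rather than termwise.
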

\begin{proof}
    Let $\hat{\mathcal{A}}=e^{-\imath\gamma(\mathcal{A})}\mathcal{A}$ and $\hat{\mathcal{B}}=e^{-\imath\gamma(\mathcal{B})}\mathcal{B}$. Then both $\hat{\mathcal{A}}$ and $\hat{\mathcal{B}}$ are sectorial with $\gamma(\hat{\mathcal{A}})=\gamma(\hat{\mathcal{B}})=0$. Therefore, $\angle \lambda(\hat{\mathcal{A}} *_{N} \hat{\mathcal{B}})$ takes value in $(-\pi,\pi)$, and $\Phi_{i}(\hat{\mathcal{A}})=\Phi_{i}(\mathcal{A})-\gamma(\mathcal{A})$, $\Phi_{i}(\hat{\mathcal{B}})=\Phi_{i}(\mathcal{B})-\gamma(\mathcal{B})$, $\angle \lambda_{i}(\hat{\mathcal{A}} *_{N} \hat{\mathcal{B}})=\angle \lambda_{i}(\mathcal{A} *_{N} \mathcal{B})-\gamma(\mathcal{A})-\gamma(\mathcal{B})$ hold for all $i\in |\mathbf{I}|$. Therefore, Eq. (\ref{eq5_1}) holds if and only if 
\begin{equation*}
    \angle \lambda(\hat{\mathcal{A}} *_{N} \hat{\mathcal{B}}) \prec \Phi(\hat{\mathcal{A}})+\Phi(\hat{\mathcal{B}}).
\end{equation*}

Without loss of generality, we assume $\gamma(\mathcal{A})=\gamma(\mathcal{B})=0$. According to Definition \ref{def:compound spectrum}, $\prod_{i=1}^{k} \lambda_{i}(\mathcal{A} *_{N} \mathcal{B}) \in \Lambda_{(k)}(\mathcal{A} *_{N} \mathcal{B}) $. Hence, from Theorem \ref{th4_3} it follows that $$\prod_{i=1}^{k} \lambda_{i}(\mathcal{A} *_{N} \mathcal{B}) \in  W' _{(k)}(\mathcal{A})W' _{(k)}(\mathcal{B}).$$ 
Consequently, there exist two nonsingular tensors $\mathcal{X}, \mathcal{Y} \in \mathbb{C}^{(I_{1} \times \cdots \times I_{N})\times (J_{1} \times \cdots \times J_{L})}$ with  $|\textbf{J}|=k $, such that
\begin{equation*}
    \prod_{i=1}^{k} \lambda_{i}(\mathcal{A} *_{N} \mathcal{B})=\prod_{i=1}^{k} \lambda_{i}(\mathcal{X}^{H} *_{N} \mathcal{A} *_{N} \mathcal{X})\prod_{i=1}^{k} \lambda_{i}(\mathcal{Y}^{H} *_{N} \mathcal{B} *_{N} \mathcal{Y}).
\end{equation*}

Since $\gamma(\mathcal{A})=\gamma(\mathcal{B})=0$, all the phases of $\mathcal{A}$ and $\mathcal{B}$ are in $(-\frac{\pi}{2},\frac{\pi}{2})$, and hence $\angle \lambda(\hat{\mathcal{A}} *_{N} \hat{\mathcal{B}})$ takes value in $(-\pi,\pi)$. We have 
\begin{align*}
   &\quad\sum_{i=1}^{k} \angle \lambda_{i}(\mathcal{A} *_{N} \mathcal{B})\\
   &= \sum_{i=1}^{k} \angle \lambda_{i}(\mathcal{X}^{H} *_{N} \mathcal{A} *_{N} \mathcal{X})+ \sum_{i=1}^{k} \angle \lambda_{i}(\mathcal{Y}^{H} *_{N} \mathcal{B} *_{N} \mathcal{Y}) \\
   &= \sum_{i=1}^{k} \Phi_{i}(\mathcal{X}^{H} *_{N} \mathcal{A} *_{N} \mathcal{X})+ \sum_{i=1}^{k} \Phi_{i}(\mathcal{Y}^{H} *_{N} \mathcal{B} *_{N} \mathcal{Y}) \\
   &\leq \sum_{i=1}^{k} \Phi_{i}(\mathcal{A}) + \sum_{i=1}^{k} \Phi_{i}(\mathcal{B}).
\end{align*}
When $k=|\textbf{I}|$, the unequal sign can be taken as the equal sign due to the sectorial tensor  decomposition in Theorem \ref{th2_3}. The proof is completed.
\end{proof}

Given two real numbers $\alpha,\beta$ satisfying $\beta-\alpha<\pi$,  define $\mathcal{C} [\alpha,\beta]$ to be a set of sectorial tensors  such that all their phases are in the open interval $(\alpha,\beta)$, i.e.,
\begin{multline*}
    \mathcal{C} [\alpha,\beta] = \left\{ 
 \mathcal{A} \in \mathbb{C}^{(I_{1} \times \cdots \times I_{N}) \times (I_{1} \times \cdots \times I_{N})} : 
 \mathcal{A} \text{ is sectorial} \right. \\
 \left. \text{and } \bar{\Phi}(\mathcal{A}) \leq \beta,\ \underline{\Phi}(\mathcal{A}) \geq \alpha \right\}.
\end{multline*}

The next theorem extends the matrix case {\rm \cite{r9}}, which gives us the rough evaluation of the phases of $\mathcal{A}+\mathcal{B}$.

\begin{theorem}\label{th6_1} 
Give sectorial tensors   \\ $\mathcal{A}, \mathcal{B} \in \mathbb{C}^{(I_{1} \times \cdots \times I_{N})\times (I_{1} \times \cdots \times I_{N})}$  and real numbers $\alpha,\beta$  such that $\beta-\alpha<\pi$,  if $\mathcal{A}, \mathcal{B} \in \mathcal{C} [\alpha,\beta]$, then $\mathcal{A}+ \mathcal{B}\in \mathcal{C} [\alpha,\beta]$.
\end{theorem}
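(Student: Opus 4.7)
The plan is to work directly from the maximin and minimax characterization of tensor phases given in Lemma \ref{th3_1}, combined with the elementary observation that the open sector of half-angle less than $\pi/2$ opened up to angle $\beta-\alpha<\pi$ is a convex cone in $\mathbb{C}$.

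First, I would fix an arbitrary unit tensor $\mathcal{X}\in\mathbb{C}^{I_{1}\times\cdots\times I_{N}}$ and split
\begin{equation*}
\mathcal{X}^{H}*_{N}(\mathcal{A}+\mathcal{B})*_{N}\mathcal{X}=w_{\mathcal{A}}+w_{\mathcal{B}},
\end{equation*}
where $w_{\mathcal{A}}:=\mathcal{X}^{H}*_{N}\mathcal{A}*_{N}\mathcal{X}$ and $w_{\mathcal{B}}:=\mathcal{X}^{H}*_{N}\mathcal{B}*_{N}\mathcal{X}$. Since $\mathcal{A}$ and $\mathcal{B}$ are sectorial, both $w_{\mathcal{A}}$ and $w_{\mathcal{B}}$ are nonzero; and since $\mathcal{A},\mathcal{B}\in\mathcal{C}[\alpha,\beta]$, Lemma \ref{th3_1} applied to the extremal formulas for $\bar{\Phi}$ and $\underline{\Phi}$ forces $\angle w_{\mathcal{A}},\angle w_{\mathcal{B}}\in[\alpha,\beta]$.

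Second, I would invoke the geometric fact that the set
\begin{equation*}
S:=\{re^{\imath\theta}:r>0,\ \theta\in[\alpha,\beta]\}
\end{equation*}
is a convex cone in $\mathbb{C}$ precisely because $\beta-\alpha<\pi$: its two extreme rays are not antipodal, so every positive linear combination of points of $S$ lies in $S$. In particular $w_{\mathcal{A}}+w_{\mathcal{B}}\in S$, so it is nonzero and its argument lies in $[\alpha,\beta]$. Since $\mathcal{X}$ was arbitrary, $W(\mathcal{A}+\mathcal{B})\subseteq S$; in particular $0\notin W(\mathcal{A}+\mathcal{B})$, which shows that $\mathcal{A}+\mathcal{B}$ is sectorial.

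Finally, applying the extremal identities for $\bar{\Phi}$ and $\underline{\Phi}$ in Lemma \ref{th3_1} to $\mathcal{A}+\mathcal{B}$ and using the bound $\angle(\mathcal{X}^{H}*_{N}(\mathcal{A}+\mathcal{B})*_{N}\mathcal{X})\in[\alpha,\beta]$ that we just established for every unit $\mathcal{X}$, yields $\underline{\Phi}(\mathcal{A}+\mathcal{B})\geq\alpha$ and $\bar{\Phi}(\mathcal{A}+\mathcal{B})\leq\beta$. Therefore $\mathcal{A}+\mathcal{B}\in\mathcal{C}[\alpha,\beta]$. The only subtle point — scarcely an obstacle — is to be explicit about why the constraint $\beta-\alpha<\pi$ is essential for the convex-cone property of $S$; without it, $S$ could contain antipodal rays and the sum of two of its elements might be zero or pointed in a direction outside $[\alpha,\beta]$.
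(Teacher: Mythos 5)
Your proposal is correct and follows essentially the same route as the paper's proof: both rely on the extremal (max/min over unit tensors) characterization of $\bar{\Phi}$ and $\underline{\Phi}$ from Lemma \ref{th3_1}, together with the elementary geometric fact that a sum of two nonzero complex numbers whose arguments differ by less than $\pi$ is nonzero with argument between them (your convex-cone phrasing of the sector $S$ is just a repackaging of this). No gaps.
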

\begin{proof}
Since $\mathcal{A}, \mathcal{B} \in \mathcal{C} [\alpha,\beta]$ and $\beta-\alpha<\pi$, there exists an open half plane containing both $W(\mathcal{A})$ and $W(\mathcal{B})$. By geometry, $W(\mathcal{A}+\mathcal{B})$ is also contained in this half plane, and thus $\mathcal{A}+\mathcal{B}$ is sectorial. Moreover, note that if $|\angle a-\angle b|<\pi$, then $\min \{\angle a,\angle b\}<\angle (a+b)<\max \{\angle a,\angle b\}$. Therefore, by Lemma \ref{th3_1},
\begin{align*}
    &\bar{\Phi}(\mathcal{A}+\mathcal{B}) \\
    &= \max_{\substack{\mathcal{X} \in \mathbb{C}^{I_{1} \times \cdots \times I_{N}} \\ \|\mathcal{X}\|=1}}  
    \angle (\mathcal{X}^{H} *_{N} \mathcal{A} *_{N} \mathcal{X} + \mathcal{X}^{H} *_{N} \mathcal{B} *_{N} \mathcal{X}) \\
    &\leq \max_{\substack{\mathcal{X} \in \mathbb{C}^{I_{1} \times \cdots \times I_{N}} \\ \|\mathcal{X}\|=1}} 
    \max \{\angle (\mathcal{X}^{H} *_{N} \mathcal{A} *_{N} \mathcal{X}), \\
    &\quad \angle (\mathcal{X}^{H} *_{N} \mathcal{B} *_{N} \mathcal{X})\} \\
    &= \max \{\bar{\Phi}(\mathcal{A}), \bar{\Phi}(\mathcal{B}) \} \\
    &\leq \beta,
\end{align*}
and
\begin{align*}
    & \underline{\Phi}(\mathcal{A}+\mathcal{B}) \\
    &= \min_{\substack{\mathcal{X} \in \mathbb{C}^{I_{1} \times \cdots \times I_{N}} \\ \|\mathcal{X}\|=1}}  
    \angle (\mathcal{X}^{H} *_{N} \mathcal{A} *_{N} \mathcal{X} + \mathcal{X}^{H} *_{N} \mathcal{B} *_{N} \mathcal{X}) \\
    &\geq \min_{\substack{\mathcal{X} \in \mathbb{C}^{I_{1} \times \cdots \times I_{N}} \\ \|\mathcal{X}\|=1}}
    \min \{\angle (\mathcal{X}^{H} *_{N} \mathcal{A} *_{N} \mathcal{X}), \\
    &\quad \angle (\mathcal{X}^{H} *_{N} \mathcal{B} *_{N} \mathcal{X})\} \\
    &= \min \{\underline{\Phi}(\mathcal{A}), \underline{\Phi}(\mathcal{B}) \} \\
    &\geq \alpha.
\end{align*}
Consequently, $\mathcal{A}+ \mathcal{B}\in \mathcal{C} [\alpha,\beta]$. The proof is completed.
\end{proof}

For all $t \in (0,1)$, it is clear that if $\mathcal{A}, \mathcal{B} \in \mathcal{C} [\alpha,\beta]$, then $t\mathcal{A}, (1-t)\mathcal{B} \in \mathcal{C} [\alpha,\beta]$, and thus $t\mathcal{A}+(1-t)\mathcal{B} \in \mathcal{C} [\alpha,\beta]$. So Theorem \ref{th6_1} has the following corollary.
\begin{corollary}\label{co6_1}
$\mathcal{C} [\alpha,\beta]$ is convex.
\end{corollary}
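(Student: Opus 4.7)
The plan is to reduce Corollary \ref{co6_1} directly to the already-established Theorem \ref{th6_1}. Recall that convexity of $\mathcal{C}[\alpha,\beta]$ requires showing that for any $\mathcal{A},\mathcal{B} \in \mathcal{C}[\alpha,\beta]$ and any $t \in (0,1)$, the convex combination $t\mathcal{A} + (1-t)\mathcal{B}$ again lies in $\mathcal{C}[\alpha,\beta]$.

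The first step is a small auxiliary observation: $\mathcal{C}[\alpha,\beta]$ is closed under multiplication by a positive real scalar. Indeed, for any $c > 0$ and every unit tensor $\mathcal{X}$, the identity $\langle c\mathcal{A} *_N \mathcal{X}, \mathcal{X}\rangle = c\,\langle \mathcal{A} *_N \mathcal{X}, \mathcal{X}\rangle$ holds, so $\angle\langle c\mathcal{A} *_N \mathcal{X}, \mathcal{X}\rangle = \angle\langle \mathcal{A} *_N \mathcal{X}, \mathcal{X}\rangle$. Combining this with the maximin characterisation in Lemma \ref{th3_1} yields $\Phi_i(c\mathcal{A}) = \Phi_i(\mathcal{A})$ for every $i$. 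Equivalently, if $\mathcal{A} = \mathcal{Q}^H *_N \mathcal{D} *_N \mathcal{Q}$ is a sectorial tensor decomposition supplied by Theorem \ref{th2_3}, then $c\mathcal{A} = (\sqrt{c}\,\mathcal{Q})^H *_N \mathcal{D} *_N (\sqrt{c}\,\mathcal{Q})$ is a sectorial tensor decomposition of $c\mathcal{A}$ with the same unitary diagonal tensor $\mathcal{D}$; by the uniqueness statement of Theorem \ref{thm:unique}, the phases are unchanged. Either viewpoint gives $c\mathcal{A} \in \mathcal{C}[\alpha,\beta]$.

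Applying this observation separately with $c = t$ and $c = 1-t$, I obtain $t\mathcal{A},\,(1-t)\mathcal{B} \in \mathcal{C}[\alpha,\beta]$. A single invocation of Theorem \ref{th6_1} then yields $t\mathcal{A} + (1-t)\mathcal{B} \in \mathcal{C}[\alpha,\beta]$, which is the desired conclusion. There is no genuine obstacle here; the corollary is essentially an immediate consequence of Theorem \ref{th6_1}, with phase-invariance under positive scaling as the only modest auxiliary input.
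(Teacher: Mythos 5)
Your proof is correct and follows the same route as the paper: the paper likewise observes that $t\mathcal{A}$ and $(1-t)\mathcal{B}$ remain in $\mathcal{C}[\alpha,\beta]$ (asserting this as clear, where you supply the justification via phase-invariance under positive scaling) and then applies Theorem \ref{th6_1}.
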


\subsection{Rank robustness against perturbations}\label{Rank robustness against perturbations}
In this subsection, given two sectorial tensors $\mathcal{A}, \mathcal{B}\in \mathbb{C}^{(I_{1} \times \cdots \times I_{N})\times (I_{1} \times \cdots \times I_{N})}$, we study the robustness of the rank of the tensor  $\mathcal{I} + \mathcal{A}  *_{N} \mathcal{B}$.


Given $\alpha\in[0,\pi)$ and $\mathcal{A} \in \mathbb{C}^{(I_{1} \times \cdots \times I_{N})\times (I_{1} \times \cdots \times I_{N})}$, for each $k\in\{1,\cdots,|\textbf{I}|\}$ define
\begin{multline*}
    \mathcal{C}_{k} [\alpha] = \left\{ 
 \mathcal{A} : \mathcal{A} \text{ is sectorial and } \sum_{i=1}^{k}\Phi_{i}(\mathcal{A}) \leq \alpha, \right. \\
 \left. \sum_{i=|\mathbf{I}|-k+1}^{|\mathbf{I}|}\Phi_{i}(\mathcal{A}) \geq -\alpha \right\}.
\end{multline*}
Thus, if $\mathcal{A}\in \mathcal{C}_{k} [\alpha]$, then the sum of the top $k$ phases is no bigger than $\alpha$, and the sum of the last $k$ small phases is no less than $-\alpha$. In particular, when $k=1$ and $\alpha < -\frac{\pi}{2}$,  $\mathcal{C}_{1} [\alpha]=\mathcal{C} [-\alpha,\alpha]$.

\begin{theorem}\label{th7_1}
Let $\mathcal{A} \in \mathbb{C}^{(I_{1} \times \cdots \times I_{N})\times (I_{1} \times \cdots \times I_{N})}$ be sectorial with phases in $(-\pi,\pi]$. For each  fixed $k\in\{1,\cdots,|\textbf{I}|\}$, $rank(\mathcal{I} + \mathcal{A}  *_{N} \mathcal{B})>|\textbf{I}|-k$ holds for all $\mathcal{B} \in \mathcal{C}_{k} [\alpha]$ if and only if 
\begin{equation*}
    \alpha< \min\left\{ k\pi-\sum_{i=1}^{k}\Phi_{i}(\mathcal{A}),\ \ k\pi+ \sum_{i=|\textbf{I}|-k+1}^{|\textbf{I}|}\Phi_{i}(\mathcal{A})\right\}.
\end{equation*}
\end{theorem}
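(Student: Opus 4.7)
The plan is to reduce the rank inequality to a multiplicity statement about the eigenvalue $-1$ and then invoke the majorization bound from Theorem \ref{th5_1}. By Definition \ref{def:rank} together with Lemma \ref{th2_5}, one has $\mathrm{rank}(\mathcal{I} + \mathcal{A} *_N \mathcal{B}) > |\mathbf{I}| - k$ if and only if the eigenvalue $0$ of $\mathcal{I} + \mathcal{A} *_N \mathcal{B}$ has multiplicity strictly less than $k$, equivalently, $\mathcal{A} *_N \mathcal{B}$ has fewer than $k$ eigenvalues equal to $-1$. Thus the theorem becomes a statement about when $\mathcal{A} *_N \mathcal{B}$ can accumulate $k$ eigenvalues at $-1$, and both directions are essentially \emph{phase} statements suitable for Theorem \ref{th5_1}.

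For the sufficiency direction, I would argue by contrapositive. Suppose some $\mathcal{B} \in \mathcal{C}_k[\alpha]$ makes $\mathcal{A}*_N\mathcal{B}$ have (at least) $k$ eigenvalues equal to $-1$. Writing $-1 = e^{\imath\pi}$, those $k$ eigenvalues sit at the very top of the phase ordering, so by Theorem \ref{th5_1} the top-$k$ sum gives
\begin{equation*}
k\pi \;=\; \sum_{i=1}^{k}\angle \lambda_{i}(\mathcal{A} *_{N} \mathcal{B}) \;\le\; \sum_{i=1}^{k}\Phi_{i}(\mathcal{A}) + \sum_{i=1}^{k}\Phi_{i}(\mathcal{B}) \;\le\; \sum_{i=1}^{k}\Phi_{i}(\mathcal{A}) + \alpha,
\end{equation*}
contradicting $\alpha < k\pi - \sum_{i=1}^{k}\Phi_i(\mathcal{A})$. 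Reassigning $-1 = e^{-\imath\pi}$ and invoking the symmetric bottom-$k$ majorization from Theorem \ref{th5_1} contradicts $\alpha < k\pi + \sum_{i=|\mathbf{I}|-k+1}^{|\mathbf{I}|}\Phi_i(\mathcal{A})$. Since one of the two representations of $-1$ is compatible with the phase interval $(\gamma(\mathcal{A})+\gamma(\mathcal{B})-\pi,\gamma(\mathcal{A})+\gamma(\mathcal{B})+\pi)$ required by Theorem \ref{th5_1}, at least one bound is violated, producing the contradiction.

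For the necessity direction, I would construct a witness $\mathcal{B} \in \mathcal{C}_k[\alpha]$ that realizes $k$ eigenvalues of $\mathcal{A} *_N \mathcal{B}$ at $-1$. Write the sectorial decomposition $\mathcal{A} = \mathcal{Q}^{H} *_{N} \mathcal{D}_{\mathcal{A}} *_{N} \mathcal{Q}$ from Theorem \ref{th2_3} with the diagonal entries of $\mathcal{D}_{\mathcal{A}}$ ordered as $e^{\imath \Phi_{1}(\mathcal{A})}, \dots, e^{\imath \Phi_{|\mathbf{I}|}(\mathcal{A})}$, and define $\mathcal{B} = \mathcal{Q}^{-1} *_{N} \mathcal{D}_{\mathcal{B}} *_{N} \mathcal{Q}^{-H}$ with $\mathcal{D}_{\mathcal{B}}$ a unitary diagonal tensor whose diagonal entries are chosen as follows. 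In the case $\alpha \ge k\pi - \sum_{i=1}^{k}\Phi_{i}(\mathcal{A})$, set $(\mathcal{D}_{\mathcal{B}})_{i} = -e^{-\imath \Phi_{i}(\mathcal{A})}$ for $i = 1, \dots, k$, so that $\mathcal{A} *_{N} \mathcal{B}$ is similar to $\mathcal{D}_{\mathcal{A}} *_{N} \mathcal{D}_{\mathcal{B}}$ and has exactly $k$ eigenvalues at $-1$; choose the remaining $(\mathcal{D}_{\mathcal{B}})_{i}$ close to $1$ (or at the phase center of $\mathcal{A}$) to keep the bottom-$k$ phase sum of $\mathcal{B}$ at least $-\alpha$ and to keep $\mathcal{B}$ sectorial. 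The symmetric construction handles $\alpha \ge k\pi + \sum_{i=|\mathbf{I}|-k+1}^{|\mathbf{I}|}\Phi_{i}(\mathcal{A})$.

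The main obstacle is the bookkeeping in the necessity step: one must (i) verify that the diagonal entries $(\mathcal{D}_{\mathcal{B}})_i$ defined above, reduced to their principal representatives in $(-\pi,\pi]$, produce an ordered phase sequence of $\mathcal{B}$ whose top-$k$ sum equals $k\pi - \sum_{i=1}^{k}\Phi_{i}(\mathcal{A}) \le \alpha$; (ii) confirm that $\mathcal{B}$ remains sectorial, i.e., that all its phases fit in an open half-plane, which may force choosing the ``free'' entries of $\mathcal{D}_{\mathcal{B}}$ near a common angle rather than naively at $1$; and (iii) justify applying Theorem \ref{th5_1} in the sufficiency argument even when an eigenvalue lies exactly at the boundary $\pm\pi$, which may require a small perturbation $\mathcal{A}_{\varepsilon}$ and a continuity argument on the eigenvalues, in the spirit of the limit argument used in the proof of Theorem \ref{th4_2}.
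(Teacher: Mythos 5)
Your proposal is correct and follows essentially the same route as the paper: the same reduction of the rank condition to the multiplicity of the eigenvalue $-1$ of $\mathcal{A}*_N\mathcal{B}$, the same appeal to the majorization in Theorem \ref{th5_1} for sufficiency, and the same witness $\mathcal{B}=\mathcal{T}^{-1}*_N\mathcal{E}*_N\mathcal{T}^{-H}$ with $\angle e_i=\pi-\Phi_i(\mathcal{A})$ for $i\le k$ and $e_i=1$ otherwise for necessity. The bookkeeping issues you flag (sectoriality of the constructed $\mathcal{B}$ and the boundary case $\angle\lambda=\pi$) are real but are also left unaddressed in the paper's own proof.
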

\begin{proof}
    First, we order the eigenvalues of $\mathcal{A} *_{N} \mathcal{B}$ as $\angle \lambda_{1}(\mathcal{A} *_{N} \mathcal{B}) \geq \angle \lambda_{2}(\mathcal{A} *_{N} \mathcal{B})\geq\cdots\geq\angle \lambda_{|\textbf{I}|}(\mathcal{A} *_{N} \mathcal{B})$. Clearly, $rank(\mathcal{I} + \mathcal{A}  *_{N} \mathcal{B})=|\textbf{I}|-k$ only if $\angle \lambda_{1}(\mathcal{A} *_{N} \mathcal{B}) = \angle \lambda_{2}(\mathcal{A} *_{N} \mathcal{B})=\cdots=\angle \lambda_{k}(\mathcal{A} *_{N} \mathcal{B})=\pi$. 
    
 For sufficiency, by Theorem \ref{th5_1} and the definition of $\mathcal{C}_{k} [\alpha]$, for all $\mathcal{B} \in \mathcal{C}_{k} [\alpha]$ we can obtain
\begin{multline*}
    \sum_{i=1}^{k}\angle \lambda_{i}(\mathcal{A} *_{N} \mathcal{B}) \leq \sum_{i=1}^{k}(\Phi_{i}(\mathcal{A}) + \Phi_{i}(\mathcal{B})) \\
    \leq \alpha + \sum_{i=1}^{k}\Phi_{i}(\mathcal{A}) < k\pi.
\end{multline*}
Therefore,
 \begin{equation*}
    \angle \lambda_{k}(\mathcal{A} *_{N} \mathcal{B})< \pi,
\end{equation*}
that is $rank(\mathcal{I} + \mathcal{A}  *_{N} \mathcal{B})>|\textbf{I}|-k$.

For necessity,  by contradiction suppose that  $\alpha\leq k\pi-\sum_{i=1}^{k}\Phi_{i}(\mathcal{A})$. Let $\mathcal{A}=\mathcal{T}^{H} *_{N} \mathcal{D} *_{N} \mathcal{T}$ be a sectorial tensor  decomposition of $\A$. Construct $\mathcal{E}\in \mathbb{C}^{(I_{1} \times \cdots \times I_{N})\times (I_{1} \times \cdots \times I_{N})}$ is a diagonal tensor, where $P(\mathcal{E})=\{e_{1},e_{2},\cdots,e_{|\textbf{I}|}\}$ satisfies
\begin{equation*}
    |e_{i}|=1, \quad for\ i=1,\cdots,k,
\end{equation*}
\begin{equation*}
    \Phi_{i}(\mathcal{A})+ \angle e_{i}=\pi, \quad for\ i=1,\cdots,k,
\end{equation*}
\begin{equation*}
    e_{i}=1, \quad for\ i=k+1,\cdots,|\textbf{I}|.
\end{equation*}
Define $\mathcal{B}=\mathcal{T}^{-1} *_{N} \mathcal{E} *_{N} \mathcal{T}^{-H}$. It is clear that $\mathcal{B}$ is also sectorial and $\sum_{i=1}^{k}\Phi_{i}(\mathcal{B})=\sum_{i=1}^{k}\angle e_{i}\leq\alpha$, $\sum_{i=|\textbf{I}|-k+1}^{|\textbf{I}|}\Phi_{i}(\mathcal{B})\geq 0 \geq-\alpha$. At this time,
\begin{equation*}
    \mathcal{A}  *_{N} \mathcal{B}=\mathcal{T}^{H} *_{N} \mathcal{D} *_{N}\mathcal{E} *_{N}\mathcal{T} ^{-H}
\end{equation*}
has k eigenvalues at -1, that is $rank(\mathcal{I} + \mathcal{A}  *_{N} \mathcal{B})=|\textbf{I}|-k$, which contradicts to the conditions.

Similarly, suppose to the contraposition that $\alpha\leq k\pi+\sum_{i=|\textbf{I}|-k+1}^{|\textbf{I}|}\Phi_{i}(\mathcal{A})$ can lead to the contradiction. Then we finish the proof.
\end{proof}



The following is an immediately consequence of Theorem \ref{th7_1} by setting  $k=1$ there.
\begin{corollary}
    Let $\mathcal{A} \in \mathbb{C}^{(I_{1} \times \cdots \times I_{N})\times (I_{1} \times \cdots \times I_{N})}$ be sectorial with phases in $(-\pi,\pi]$. Then $\mathcal{I} + \mathcal{A}  *_{N} \mathcal{B}$ is invertible for all $\mathcal{B} \in \mathcal{C} [-\alpha,\alpha]$ if and only if 
\begin{equation*}
    \alpha< \min\left\{ \pi-\Phi_{1}(\mathcal{A}),\ \ \pi+ \Phi_{|\textbf{I}|}(\mathcal{A})\right\}.
\end{equation*}
\end{corollary}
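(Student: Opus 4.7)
The plan is to derive the corollary as a direct specialization of Theorem \ref{th7_1} with $k=1$, and then translate each of the three ingredients (the rank condition, the admissible class $\mathcal{C}_k[\alpha]$, and the bound on $\alpha$) into the language of the corollary.

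First, I would unpack the rank condition. By Definition \ref{def:rank}, for an even-order square tensor $\mathcal{T}$ the rank equals the number of nonzero eigenvalues and $\mathcal{T}$ is nonsingular exactly when all its eigenvalues are nonzero. Setting $\mathcal{T}=\mathcal{I}+\mathcal{A}*_{N}\mathcal{B}$, the statement $\operatorname{rank}(\mathcal{I}+\mathcal{A}*_{N}\mathcal{B})>|\mathbf{I}|-1$ is the same as $\operatorname{rank}(\mathcal{I}+\mathcal{A}*_{N}\mathcal{B})=|\mathbf{I}|$, which in turn is equivalent to $\mathcal{I}+\mathcal{A}*_{N}\mathcal{B}$ being invertible. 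This takes care of translating the left-hand side of the biconditional in Theorem \ref{th7_1} into the invertibility statement required by the corollary.

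Next, I would identify $\mathcal{C}_{1}[\alpha]$ with $\mathcal{C}[-\alpha,\alpha]$. For $k=1$ the two defining inequalities in the definition of $\mathcal{C}_{k}[\alpha]$ collapse to $\Phi_{1}(\mathcal{B})\le\alpha$ and $\Phi_{|\mathbf{I}|}(\mathcal{B})\ge-\alpha$, i.e., $\bar{\Phi}(\mathcal{B})\le\alpha$ and $\underline{\Phi}(\mathcal{B})\ge-\alpha$, which is precisely the defining condition for membership in $\mathcal{C}[-\alpha,\alpha]$. Thus the quantifier ``for all $\mathcal{B}\in\mathcal{C}_{1}[\alpha]$'' in Theorem \ref{th7_1} matches the quantifier ``for all $\mathcal{B}\in\mathcal{C}[-\alpha,\alpha]$'' in the corollary.

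Finally, I would substitute $k=1$ into the bound supplied by Theorem \ref{th7_1}. The expressions $k\pi-\sum_{i=1}^{k}\Phi_{i}(\mathcal{A})$ and $k\pi+\sum_{i=|\mathbf{I}|-k+1}^{|\mathbf{I}|}\Phi_{i}(\mathcal{A})$ become $\pi-\Phi_{1}(\mathcal{A})$ and $\pi+\Phi_{|\mathbf{I}|}(\mathcal{A})$, respectively, yielding exactly the inequality $\alpha<\min\{\pi-\Phi_{1}(\mathcal{A}),\,\pi+\Phi_{|\mathbf{I}|}(\mathcal{A})\}$ required. I do not foresee any genuine obstacle: the entire argument is a notational specialization, and the only step that needs a brief justification is the reinterpretation of the rank inequality as invertibility via Definition \ref{def:rank}.
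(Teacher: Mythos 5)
Your proposal is correct and follows exactly the paper's route: the paper states the corollary as an immediate consequence of Theorem \ref{th7_1} with $k=1$, and your three translation steps (rank $>|\textbf{I}|-1$ means invertibility, $\mathcal{C}_{1}[\alpha]=\mathcal{C}[-\alpha,\alpha]$, and substituting $k=1$ into the bound) are precisely the details the paper leaves implicit.
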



\section{Applications in multilinear control}
\subsection{Small phase theorem for sectorial tensors}\label{Small Phase Theorem}

In this subsection, we present a small phase theorem for sectorial tensors.

First, we will review the unfolding process and block tensors under the Einstein product. For a given sequence of tensor dimensions $\textbf{I}=(I_{1},\cdots,I_{N})$ and a corresponding vector of indices $\textbf{i}=(i_{1},\cdots,i_{N})$, define
\begin{equation*}
    ivec(\textbf{i},\textbf{I}):=i_{1}+\sum_{k=2}^{N}(i_{k}-1)\prod_{j=1}^{k-1}I_{j}.
\end{equation*}
The unfolding of a given tensor $\mathcal{A}=(a_{i_{1}\cdots i_{N}j_{1}\cdots j_{M}}) \in \mathbb{C}^{(I_{1} \times \cdots \times I_{N})\times (J_{1} \times \cdots \times J_{M})}$ to a matrix is defined as an isomorphic map\cite{r12}, \cite{wang2025algebraic}
\begin{eqnarray}
    \phi:\mathbb{C}^{(I_{1} \times \cdots \times I_{N})\times (J_{1} \times \cdots \times J_{M})} &\rightarrow& \mathbb{C}^{|\textbf{I}|\times |\textbf{J}|} \label{chen2024phase}\\
     \mathcal{A}=(a_{i_{1}\cdots i_{N}j_{1}\cdots j_{M}}) &\mapsto& A=(A_{ivec(\textbf{i},\textbf{I})ivec(\textbf{j},\textbf{J})}) \nonumber.
\end{eqnarray}
The isomorphic map $\phi$ enjoys the following properties which can be easily verified.

\begin{lemma} \label{lem:isomorphism}
   Given tensors $\mathcal{A} \in \mathbb{C}^{(I_{1} \times \cdots \times I_{M})\times (K_{1} \times \cdots \times K_{N})}$ and $\mathcal{B} \in \mathbb{C}^{(K_{1} \times \cdots \times K_{N})\times (J_{1} \times \cdots \times J_{L})}$,

\bed
\item[(1)] $\phi(\mathcal{A}*_{N} \mathcal{B}) = \phi(\mathcal{A})\phi(\mathcal{B})$;
\item[(2)] $\phi(\mathcal{A}^{H})=\phi(\mathcal{A})^{H}$;
\item[(3)] For all $\lambda\in\mathbb{C}$, $\phi(\lambda\mathcal{A})=\lambda\phi(\mathcal{A})$;
\item[(4)] If $\mathcal{A}$ is a diagonal tensor, then $\phi(\mathcal{A})$ is a diagonal matrix;
\item[(5)] If $\mathcal{A}$ is a sectorial tensor, then $\phi(\mathcal{A})$ is a sectorial matrix;
\item[(6)] Let $\A$ be square. Then $\lambda$ is an eigenvalue of $\A$ if and only if it is an eigenvalue of $\phi(\A)$.
\item[(6)] Assume $\A$ is sectorial. Then $\bar{\Phi}(\A) = \bar{\Phi}(\phi(\A))$ and $\underline{\Phi}(\A) = \underline{\Phi}(\phi(\A))$.
\eed
\end{lemma}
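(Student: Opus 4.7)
The plan is to verify the seven assertions in the order they are listed, since the later items build on the earlier ones. Properties (1)--(4) are essentially a bookkeeping exercise in the definition of the index-vectorization $ivec$; the remaining properties are derived from (1)--(4) together with the fact that $\phi$, when restricted to the ``vector'' slots $\mathbb{C}^{I_1 \times \cdots \times I_N}$, is an isometric isomorphism onto $\mathbb{C}^{|\mathbf{I}|}$.

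First I would verify (1). The key point is that as $(k_1,\ldots,k_N)$ ranges over $[K_1] \times \cdots \times [K_N]$, the integer $ivec(\mathbf{k},\mathbf{K})$ ranges bijectively over $[|\mathbf{K}|]$. Writing out the $(ivec(\mathbf{i},\mathbf{I}),\, ivec(\mathbf{j},\mathbf{J}))$-entry of $\phi(\mathcal{A}*_N \mathcal{B})$ using the Einstein-product formula and then relabeling the summation index turns it into the matrix product $\phi(\mathcal{A})\phi(\mathcal{B})$. Property (2) is an immediate consequence of the definition: conjugating the entries and swapping the row/column index groups corresponds exactly to the matrix conjugate transpose under $\phi$. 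Property (3) is obvious since scalar multiplication is entrywise for both objects. For (4), the diagonality condition $d_{i_1 \cdots i_N j_1 \cdots j_N}=0$ unless $(i_1,\ldots,i_N)=(j_1,\ldots,j_N)$ translates under $ivec$ to $(\phi(\mathcal{A}))_{pq}=0$ whenever $p \neq q$, since $ivec(\cdot,\mathbf{I})$ is injective.

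For (5)--(7) I would exploit the fact that $\phi$ is norm-preserving on unit tensors: if $\mathcal{X}\in\mathbb{C}^{I_1\times\cdots\times I_N}$ is a unit tensor then $x:=\phi(\mathcal{X})\in\mathbb{C}^{|\mathbf{I}|}$ is a unit vector, and conversely. Combining this with (1) and (2) gives
\begin{equation*}
\langle \mathcal{A}*_N \mathcal{X},\mathcal{X}\rangle
= \mathcal{X}^H *_N \mathcal{A} *_N \mathcal{X}
= x^* \phi(\mathcal{A}) x,
\end{equation*}
so $W(\mathcal{A})=W(\phi(\mathcal{A}))$. In particular $0\notin W(\mathcal{A})$ iff $0\notin W(\phi(\mathcal{A}))$, which is (5). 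For (6), an eigenpair $\mathcal{A}*_N\mathcal{X}=\lambda\mathcal{X}$ becomes, after applying $\phi$ and invoking (1), the matrix eigenpair $\phi(\mathcal{A})\phi(\mathcal{X})=\lambda\phi(\mathcal{X})$, and the converse is equally immediate because $\phi$ is a bijection. For the final assertion, Lemma \ref{th3_1} characterizes $\bar{\Phi}(\mathcal{A})$ and $\underline{\Phi}(\mathcal{A})$ as the supremum and infimum of $\angle(\mathcal{X}^H *_N \mathcal{A} *_N \mathcal{X})$ over unit tensors; the matrix analogue holds for $\phi(\mathcal{A})$; and these two optimizations coincide because the numerical ranges coincide.

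The main obstacle will be the careful index calculation for (1), where one must keep track of the ordering convention in $ivec$ and ensure that the single matrix summation over $[|\mathbf{K}|]$ really does recover the $N$-fold tensor summation in the Einstein product. Once (1) is established cleanly, the remaining items reduce to routine applications of the bijection and norm-preservation of $\phi$; the two duplicate-labeled items (6) in the statement should be re-labeled, but no separate argument is needed since the phase equality in the last assertion is exactly the numerical-range consequence outlined above.
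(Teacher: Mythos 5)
Your proposal is correct, and since the paper states this lemma without proof (it only remarks that the properties "can be easily verified"), your argument supplies exactly the intended verification: index bookkeeping via the bijectivity of $ivec$ for items (1)--(4), and the resulting identity $W(\mathcal{A})=W(\phi(\mathcal{A}))$ for the remaining items. The only refinement worth noting is that applying $\phi$ directly to a sectorial decomposition $\mathcal{A}=\mathcal{Q}^{H}*_{N}\mathcal{D}*_{N}\mathcal{Q}$ yields a sectorial decomposition of $\phi(\mathcal{A})$ and hence equality of the \emph{entire} phase multiset, which is slightly stronger than (and immediately implies) the extremal-phase equality you obtain from Lemma \ref{th3_1}.
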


In the following, we construct bigger tensors from smaller ones. Here we adopt a compact concatenation approach \cite{chen2019multilinear, chen2021multilinear}  to construct block tensors.  
\begin{definition}[$n$-mode block tensor \cite{chen2021multilinear}]
\label{Def: n-modeblocktensor}
	Let $\mathcal{A}, \mathcal{B} \in \mathbb{C}^{(I_{1} \times \cdots \times I_{N})\times (J_{1} \times \cdots \times J_{N})}$. For each $n = 1,\dots, N$, the $n$-mode row block tensor  concatenated by $\mathcal{A}$ and $\mathcal{B}$, denoted by $\left[\begin{matrix}
		\mathcal{A} & \mathcal{B}
	\end{matrix}\right]_n \in \mathbb{C}^{(I_{1} \times \cdots I_{n} \times \cdots \times I_{N})\times (J_{1} \times \cdots \times 2J_{n}\cdots \times J_{N})}$, is defined element-wise  as
    \begin{multline*}
\left(\begin{bmatrix}
		\mathcal{A} & \mathcal{B}
	\end{bmatrix}_n\right)_{i_1\cdots i_N j_1\cdots j_N} = \\
\begin{cases}
\mathcal{A}_{i_1\cdots i_{n}\cdots i_N j_1\cdots j_{n}\cdots j_N}, \\
\quad i_k = 1,\dots , I_k,\ j_k = 1,\dots , J_k,\ \forall k, \\[1ex]
\mathcal{B}_{i_1\cdots i_{n}\cdots i_N j_1\cdots (j_{n}-J_{n})\cdots j_N}, \\
\quad i_k = 1,\dots , I_k,\ \forall k,\ j_k = 1,\dots, J_k \\
\quad \text{for } k\ne n \text{ and } j_n = J_n + 1, \dots, 2J_n.
\end{cases}
\end{multline*}
	The $n$-mode column block tensor  is $ \left[\begin{smallmatrix}
		\mathcal{A} \\
		\mathcal{B}
	\end{smallmatrix}\right]_n := \left[\begin{smallmatrix}
		\mathcal{A}^{\top} & \mathcal{B}^{\top}
	\end{smallmatrix}\right]_n^{\top}$.
\end{definition}

Clearly. $\left[\begin{matrix}
		\mathcal{A} & \mathcal{B}
	\end{matrix}\right]_1 $ of two tensors $\mathcal{A}, \mathcal{B} \in \mathbb{C}^{(I_{1} \times \cdots \times I_{N})\times (J_{1} \times \cdots \times J_{N})}$ is a direct generalization of $\left[\begin{matrix}
		A & B
	\end{matrix}\right] $ of matrices $A,B$ of the same row numbers.

We also denote by 
$\left[\begin{smallmatrix}
	\mathcal{A} &  \mathcal{B} \\
	\mathcal{C} &  \mathcal{D}
\end{smallmatrix}\right]_n =\left[\begin{smallmatrix}
	\left[\begin{smallmatrix}
		\mathcal{A} & \mathcal{B}
\end{smallmatrix}\right]_n\\
	\left[\begin{smallmatrix}
		\mathcal{C} & \mathcal{D}
	\end{smallmatrix}\right]_n
\end{smallmatrix}\right]_n$, the $n$-mode block tensor concatenated by the $n$-mode row block tensors $\left[\begin{smallmatrix}
	\mathcal{A} & \mathcal{B}
\end{smallmatrix}\right]_n$ and $\left[\begin{smallmatrix}
	\mathcal{C} & \mathcal{D}
\end{smallmatrix}\right]_n$.

Under the Einstein product, block tensors enjoy  properties similar to their matrix counterparts.
\begin{proposition}[\cite{chen2021multilinear}] \label{Pro: blocktensor}
	Let $\mathcal{A}, \mathcal{B} \in \mathbb{C}^{(I_{1} \times \cdots \times I_{N})\times (J_{1} \times \cdots \times J_{N})}$, $ \mathcal{C}, \mathcal{D} \in \mathbb{C}^{ (J_{1} \times \cdots \times J_{N})\times(I_{1} \times \cdots \times I_{N})}$. The following properties of block tensors hold for all $n=1,\dots,N$.
	\begin{enumerate} \rm
		\item
		$	\begin{bmatrix}
			\mathcal{P}* \mathcal{A} & \mathcal{P}*\mathcal{B}
		\end{bmatrix}_n = \mathcal{P}* \begin{bmatrix} \mathcal{A} & \mathcal{B}	\end{bmatrix}_n
        holds\ for\ all\\  tensors\  \mathcal{P}\ with\ compatible \  dimensions.$ 
        \vspace{2pt}    
		\item 
		$	\begin{bmatrix}
			\mathcal{C}* \mathcal{Q} \\
			\mathcal{D}* \mathcal{Q}
		\end{bmatrix}_n = \begin{bmatrix}
			\mathcal{C} \\
			\mathcal{D}
		\end{bmatrix}_n* \mathcal{Q}\ holds\ for\ all\  tensors\\  \mathcal{Q}\ with\ compatible \  dimensions$.
        \vspace{2pt}
		\item
		$		\begin{bmatrix}
			\mathcal{A} &  \mathcal{B} 
		\end{bmatrix}_n* \begin{bmatrix}
			\mathcal{C} \\
			\mathcal{D}
		\end{bmatrix}_n = 
		\mathcal{A}* \mathcal{C} + \mathcal{B}* \mathcal{D}. $
	\end{enumerate}
\end{proposition}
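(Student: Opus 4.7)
The plan is to prove all three identities by direct element-wise verification, using Definition \ref{Def: n-modeblocktensor} of the $n$-mode block tensor together with the element-wise formula for the Einstein product. The unifying idea is that the $n$-mode row block tensor $[\mathcal{A}\ \mathcal{B}]_n$ (resp. column block tensor) is a ``piecewise tensor'': its $(j_1,\ldots,j_N)$-th column depends on whether $j_n\le J_n$ (where it inherits from $\mathcal{A}$) or $j_n>J_n$ (where it inherits from $\mathcal{B}$, with the index shifted to $j_n-J_n$). Every identity in the proposition will therefore be established by a two-case split on the relevant $n$-th mode index.

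For property (1), I would fix an arbitrary multi-index pair $(\mathbf{i},\mathbf{j})$ of the left-hand side tensor. By the $n$-mode row block definition, the $(\mathbf{i},\mathbf{j})$-entry of $[\mathcal{P}*_N\mathcal{A}\ \ \mathcal{P}*_N\mathcal{B}]_n$ equals $(\mathcal{P}*_N\mathcal{A})_{\mathbf{i}\mathbf{j}}$ if $j_n\le J_n$ and $(\mathcal{P}*_N\mathcal{B})_{\mathbf{i}\mathbf{j}'}$ (with $j_n$ replaced by $j_n-J_n$) otherwise. Expanding each via the Einstein product yields a sum of the form $\sum_{\mathbf{k}}\mathcal{P}_{\mathbf{i}\mathbf{k}}\,\mathcal{A}_{\mathbf{k}\mathbf{j}}$ or the analogous expression for $\mathcal{B}$. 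On the other hand, expanding the right-hand side $\mathcal{P}*_N[\mathcal{A}\ \mathcal{B}]_n$ gives $\sum_{\mathbf{k}}\mathcal{P}_{\mathbf{i}\mathbf{k}}\,[\mathcal{A}\ \mathcal{B}]_n(\mathbf{k},\mathbf{j})$, and inserting the definition of $[\mathcal{A}\ \mathcal{B}]_n$ splits this sum into the same two cases. Property (2) follows by the dual argument applied to the column block construction, or equivalently by transposing (1) and invoking the identity $(\mathcal{X}*_N\mathcal{Y})^H=\mathcal{Y}^H*_N\mathcal{X}^H$ together with the definition $[\mathcal{C};\mathcal{D}]_n:=[\mathcal{C}^H\ \mathcal{D}^H]_n^H$.

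Property (3) is the most substantive. I would fix $(\mathbf{i},\mathbf{j})$ and expand
\[
\big([\mathcal{A}\ \mathcal{B}]_n*_N[\mathcal{C};\mathcal{D}]_n\big)_{\mathbf{i}\mathbf{j}}=\sum_{\mathbf{k}}[\mathcal{A}\ \mathcal{B}]_n(\mathbf{i},\mathbf{k})\cdot[\mathcal{C};\mathcal{D}]_n(\mathbf{k},\mathbf{j}),
\]
where $k_n$ now ranges over $\{1,\ldots,2J_n\}$ because the contracted mode is precisely the one that was doubled. Splitting the sum at $k_n=J_n$, the first block $k_n\in\{1,\ldots,J_n\}$ contributes the entries of $\mathcal{A}$ contracted against those of $\mathcal{C}$ and reconstitutes $(\mathcal{A}*_N\mathcal{C})_{\mathbf{i}\mathbf{j}}$; the second block $k_n\in\{J_n{+}1,\ldots,2J_n\}$, after the index shift $k_n\mapsto k_n-J_n$, reconstitutes $(\mathcal{B}*_N\mathcal{D})_{\mathbf{i}\mathbf{j}}$. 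Summing the two contributions gives the claim.

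The main obstacle is purely bookkeeping: keeping the multi-index summations in the Einstein product aligned with the piecewise definition of the $n$-mode block tensor, and handling the shift $k_n\mapsto k_n-J_n$ without losing any terms. There is no analytical depth here. A cosmetically cleaner alternative would be to prove the $n=1$ case first, where one can appeal to the unfolding $\phi$ of Lemma \ref{lem:isomorphism} (modulo a column-permutation since $\phi([\mathcal{A}\ \mathcal{B}]_1)$ is not literally $[\phi(\mathcal{A})\ \phi(\mathcal{B})]$), and then reduce general $n$ to $n=1$ via a mode permutation; however this only shifts the bookkeeping from indices to permutation matrices, so the direct element-wise approach above is probably the most transparent.
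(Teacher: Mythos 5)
Your element-wise, two-case argument is correct, and it is the standard way to verify these identities; note, though, that the paper states Proposition \ref{Pro: blocktensor} as a cited result from \cite{chen2021multilinear} and gives no proof of its own, so there is nothing internal to compare against. All three verifications check out: in (1) and (2) the contracted mode is disjoint from the doubled mode, so the case split on $j_n$ (resp.\ $i_n$) passes straight through the Einstein sum, while in (3) the contracted mode is exactly the doubled one and splitting the $k_n$-sum at $J_n$ reconstitutes $\mathcal{A}*_N\mathcal{C}+\mathcal{B}*_N\mathcal{D}$. One cosmetic slip: Definition \ref{Def: n-modeblocktensor} builds the column block from the transpose $\top$, not the conjugate transpose $H$, so your reduction of (2) to (1) should use $(\mathcal{X}*_N\mathcal{Y})^{\top}=\mathcal{Y}^{\top}*_N\mathcal{X}^{\top}$ rather than the Hermitian identity (the direct dual argument you also offer is unaffected).
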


Ragnarsson and Van Loan \cite{MR2902676} studied the unfolding patterns of block tensors: the subblocks of a tensor can be mapped to contiguous blocks in the unfolding matrix through a series of row and column permutations. Specifically, let $s = qr$, where $q,r$ are positive integers.  A perfect shuffle permutation $\Pi _{q,r} \in \mathbb{R}^{s\times s}$ is defined by
\[ \Pi_{q,r} \mathbf{z} = \begin{bmatrix}
	z_{1:r:s} \\
	z_{2:r:s} \\
	\vdots  \\
	z_{r:r:s}
\end{bmatrix}, \ \ \forall  \ \mathbf{z}\in \mathbb{C}^{s}. \]

 The following lemma is an immediate consequence of \cite[Theorem 3.3]{MR2902676}. 

\begin{lemma} [\cite{wang2025algebraic}] \label{Lemma: unfolding}
	Given even-order square tensors $\mathcal{A}, \mathcal{B}, \mathcal{C}, \mathcal{D} \in \mathbb{C}^{ (I_{1} \times \cdots \times I_{N})\times(I_{1} \times \cdots \times I_{N})}$, there exists a permutation matrix $P = Q_N\cdots Q_2Q_1$ such that 	
    \begin{equation}\label{eq:blcking}
     \phi\left( \begin{bmatrix}
		\mathcal{A} & \mathcal{B} \\
		\mathcal{C} & \mathcal{D}
	\end{bmatrix}_n \right) =  P \begin{bmatrix}
		\phi(\mathcal{A})  & \phi(\mathcal{B}) \\
		\phi(\mathcal{C}) & \phi(\mathcal{D})
	\end{bmatrix} P^{\top}, 
    	\end{equation}
	where $Q_k = I_{2I_1\cdots I_N}$ for $k \le n$, and $Q_k = I_{I_{k+1}\cdots I_N}\otimes \Pi _{I_k,2} \otimes I_{I_1\cdots I_{k-1}}$ for $k \ge n+1$.
\end{lemma}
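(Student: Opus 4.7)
The plan is to verify the identity entry-by-entry by tracing each index through the unfolding map $\phi$. I would reduce the problem to a statement purely about how the two distinguished indices (the $n$-th column-mode and, symmetrically, the $n$-th row-mode) get placed by $ivec$, and then show that exactly the claimed product of perfect shuffles corrects the placement.

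First I would fix notation. Let $\tilde{\textbf{J}} = (J_1,\dots,J_{n-1},2J_n,J_{n+1},\dots,J_N)$ denote the column dimension of the $n$-mode row block tensor $\bigl[\,\mathcal{A}\ \mathcal{B}\,\bigr]_n$ (with $J_k = I_k$), and analogously double $I_n$ on the row side for the full $2\times 2$ block. For a column multi-index $\tilde{\textbf{j}}$ write $\tilde{j}_n = j_n + (\epsilon-1)J_n$ with $\epsilon\in\{1,2\}$ selecting the block ($\mathcal{A}$ or $\mathcal{B}$ on the top, $\mathcal{C}$ or $\mathcal{D}$ on the bottom). Unwinding the definition of $ivec(\tilde{\textbf{j}},\tilde{\textbf{J}})$ gives the position
\[
ivec(\textbf{j},\textbf{J}) + (\epsilon-1)\,J_1\cdots J_{n-1}\cdot\!\!\!\prod_{k=n+1}^{N}\!\!J_k,
\]
i.e.\ the block-selector $\epsilon$ sits at weight $J_1\cdots J_{n-1}$, strictly \emph{between} $j_{n-1}$ and $j_{n+1}$. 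By contrast, in the standard block matrix $\left[\begin{smallmatrix}\phi(\mathcal{A})&\phi(\mathcal{B})\\ \phi(\mathcal{C})&\phi(\mathcal{D})\end{smallmatrix}\right]$ the block-selector $\epsilon$ appears as the \emph{outermost} (slowest) index, with weight $J_1\cdots J_N$. Thus, up to a permutation of row and column indices, the two sides of \eqref{eq:blcking} contain the same entries; the task is to identify the permutation explicitly.

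Next I would recall that a perfect shuffle $\Pi_{q,r}$ swaps the roles of a $q$-block and an $r$-block in a mixed-radix index, i.e.\ it moves a factor of $r$ from fast to slow (or vice versa) past a factor of $q$. Tensoring with identities localizes this swap to a prescribed position in the Kronecker hierarchy. Hence the matrix $Q_k = I_{I_{k+1}\cdots I_N}\otimes \Pi_{I_k,2}\otimes I_{I_1\cdots I_{k-1}}$ leaves the indices $(j_1,\dots,j_{k-1})$ and $(j_{k+1},\dots,j_N)$ alone and exchanges the $2$-factor (the block selector) with the $I_k$-factor sitting directly above it. Applying $Q_{n+1}$ moves $\epsilon$ from weight $J_1\cdots J_{n-1}$ to weight $J_1\cdots J_n$; applying $Q_{n+2}$ then pushes $\epsilon$ one level further out; iterating up to $Q_N$ lands $\epsilon$ at weight $J_1\cdots J_N$, which is precisely its position in the block-matrix layout. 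For $k\le n$ the factor is the identity, consistent with the fact that no shuffling is needed past modes that are already slower than $n$. Defining $P=Q_N\cdots Q_1$ and performing the symmetric argument on the row side (which uses the same $\textbf{I}$-shape) yields $P$ on the left and $P^{\top}$ on the right, matching the claimed formula.

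The main obstacle I expect is purely bookkeeping: keeping the mixed-radix $ivec$ formulas, the Kronecker ordering convention (which here places the \emph{slowest} index on the left of $\otimes$, opposite to some other conventions), and the direction of the perfect shuffle consistent throughout, especially when one has to check that the row-side permutation is exactly $P^{\top}$ and not some conjugate. A clean way to handle this is to package everything as compositions of the isomorphism $\phi$ acting on intermediate ``partial unfoldings'' in the sense of Ragnarsson--Van Loan~\cite{MR2902676}, so that each $Q_k$ is recognized as the commutation between two adjacent partial unfoldings; this route is also what makes the result ``an immediate consequence'' of \cite[Thm.~3.3]{MR2902676}, as already noted in the statement.
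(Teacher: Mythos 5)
You should first note that the paper does not actually prove this lemma: it is imported from \cite{wang2025algebraic} and justified only as ``an immediate consequence of \cite[Theorem 3.3]{MR2902676}.'' Your plan --- trace the block-selector digit through the mixed-radix expansion defined by $ivec$, observe that in $\phi\bigl(\bigl[\begin{smallmatrix}\mathcal{A}&\mathcal{B}\\ \mathcal{C}&\mathcal{D}\end{smallmatrix}\bigr]_n\bigr)$ it is interleaved just above mode $n$ while in the ordinary block matrix it is the outermost digit, and then recognize each $Q_k$ ($k\ge n+1$) as a localized digit transposition that migrates the selector one level outward --- is exactly the mechanism behind the Ragnarsson--Van Loan result, so as a self-contained substitute for the citation the strategy is sound.

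However, the one explicit computation you commit to is wrong, and it is the load-bearing step. Writing the doubled column index as $\tilde{j}_n=j_n+(\epsilon-1)J_n$, the definition of $ivec$ gives
\begin{equation*}
ivec(\tilde{\textbf{j}},\tilde{\textbf{J}})=j_1+\sum_{k=2}^{n}(j_k-1)\prod_{l=1}^{k-1}J_l+(\epsilon-1)\prod_{l=1}^{n}J_l+2\sum_{k=n+1}^{N}(j_k-1)\prod_{l=1}^{k-1}J_l,
\end{equation*}
because $\tilde{J}_n=2J_n$ enters the weight of \emph{every} mode above $n$. Two consequences: the selector $\epsilon$ sits at weight $J_1\cdots J_n$ (it is slower than $j_n$, not at weight $J_1\cdots J_{n-1}$ ``between $j_{n-1}$ and $j_{n+1}$''), and the position is \emph{not} $ivec(\textbf{j},\textbf{J})$ plus the single offset $(\epsilon-1)J_1\cdots J_{n-1}\prod_{k=n+1}^{N}J_k$ that you display --- that quantity is not the weight of any digit in the expansion. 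With the wrong base point, the subsequent claim that ``$Q_{n+1}$ moves $\epsilon$ from weight $J_1\cdots J_{n-1}$ to weight $J_1\cdots J_n$'' cannot be verified; the correct bookkeeping is that $\Pi_{I_{k},2}$ (suitably padded) transposes the adjacent digit pair $(\epsilon,j_{k})$, carrying $\epsilon$ from weight $J_1\cdots J_{k-1}$ to weight $J_1\cdots J_{k}$ for $k=n+1,\dots,N$, terminating at the outermost weight $J_1\cdots J_N$. You should also verify the composition order in $P=Q_N\cdots Q_1$ against the direction of the identity (which side of the equation the shuffles act on) rather than asserting it; this is precisely where a sign-of-shuffle error ($\Pi_{q,r}$ versus $\Pi_{q,r}^{\top}=\Pi_{r,q}$) would hide. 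Once the index formula is corrected, the rest of your argument goes through.
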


 Recall that $\mathcal{RH}_\infty$ is the space of all proper and real-rational stable transfer matrices; see for example \cite[pp. 100]{ZDG96}. Let $\mathcal{RH}^{(I_{1} \times \cdots \times I_{N})\times (I_{1} \times \cdots \times I_{N})}_{\infty}$ be the set of all  real rational proper stable transfer tensors, i.e., each entry of $\mathcal{G}(s)\in\mathcal{RH}^{(I_{1} \times \cdots \times I_{N})\times (I_{1} \times \cdots \times I_{N})}_{\infty}$ is a proper real-rational function with all the roots of its denominator lying in the open left-half plane. Similar to the matrix case (\cite[pp. 100]{ZDG96}), for a transfer tensor $\mathcal{G}(s) \in \mathcal{RH}^{(I_{1} \times \cdots \times I_{N})\times (I_{1} \times \cdots \times I_{N})}_{\infty}$, we define its $H_{\infty}$  norm   as
 \begin{equation} \label{eq:H infty norm}
 	\| \mathcal{G} \|_{\infty}:= \sup_{\omega \in \mathbb{R}} \| \mathcal{G}(\iota\omega) \|_2,
 \end{equation}
 where $\|\cdot \|_2$ is the spectral norm, namely the largest singular value  $\sigma_{\rm max}(\cdot)$.

As all the coefficients  of a transfer tensor  $\mathcal{G}(s)\in\mathcal{RH}^{(I_{1} \times \cdots \times I_{N})\times (I_{1} \times \cdots \times I_{N})}_{\infty}$ are real, $\mathcal{G}(\imath \omega)$ is conjugate symmetric, i.e.,  $    \mathcal{G}(-\imath \omega)=\overline{\mathcal{G}(\imath \omega)}$. Thus,  the numerical ranges $W(\mathcal{G}(\imath \omega))$ and $W(\mathcal{G}(-\imath \omega))$ are symmetric about the real axis.

Recently, a class of continuous-time   multi-linear (MLTI) systems of the form   
\begin{subequations} \label{MLTI-continuous}
  \begin{align} 
		\dot{\mathcal{X}}(t) =&\; \mathcal{A}*_N \mathcal{X}(t) + \mathcal{B}*_N\mathcal{U}(t), \\
		\mathcal{Y}(t) =&\;  \mathcal{C}*_N\mathcal{X}(t)+ \mathcal{D}*_N\mathcal{U}(t),
\end{align}  
\end{subequations}
has been studied in \cite{wang2025algebraic}, where the state $\mathcal{X}(t)$, input $\mathcal{U}(t)$, and output $\mathcal{Y}(t) $ are continuous-time  $N$th-order tensor processes,  while $\mathcal{A},\mathcal{B}, \mathcal{C}, \mathcal{D}$ are constant $(2N)$th-order squared tensors.  The transfer  tensor of the continuous-time MLTI system \eqref{MLTI-continuous} is defined as  
\begin{equation} \label{eq:tf}
	\mathcal{G}(s) =    \mathcal{D} +  \mathcal{C}*_N (s \mathcal{I}- \mathcal{A})^{-1}*_N  \mathcal{B}. 
\end{equation}

\begin{figure}[htbp!]
	\centering
	\includegraphics[width=0.45\textwidth]{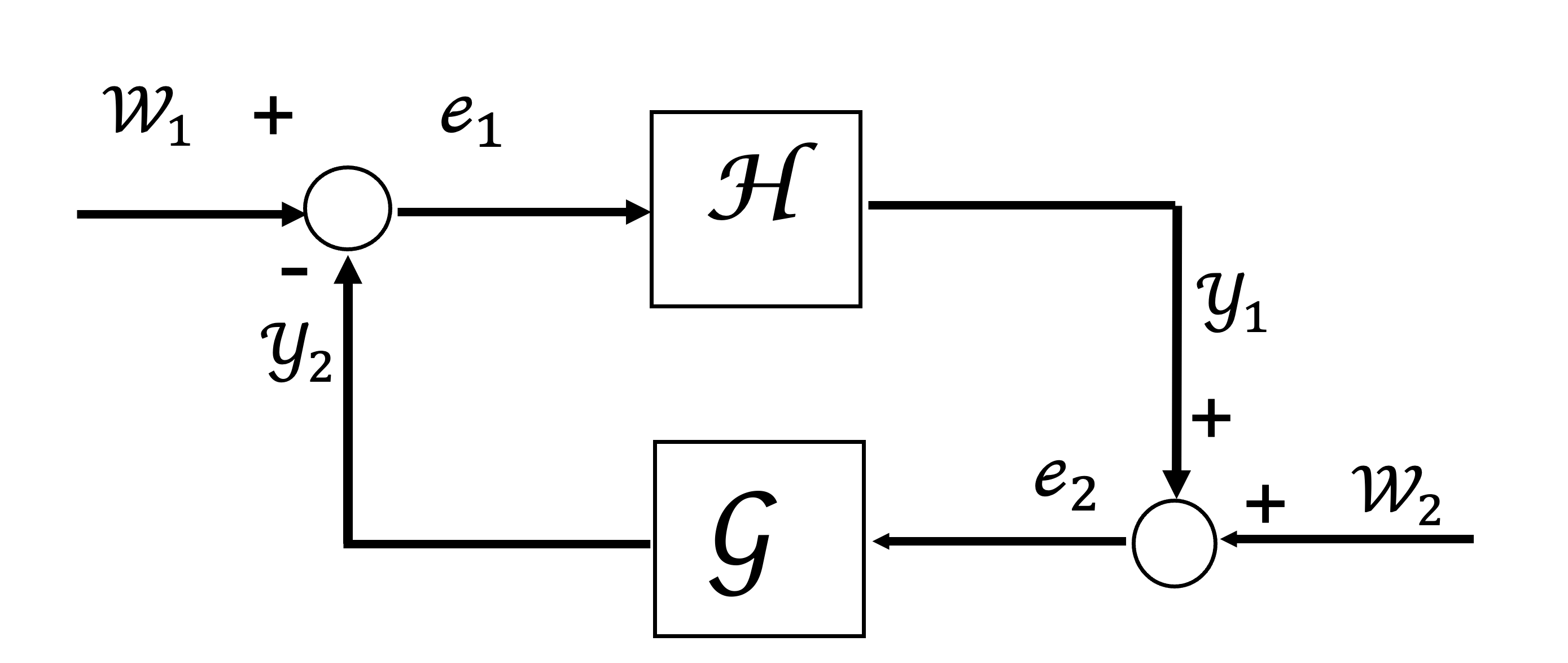}
	\caption{Closed-loop stability of the feedback tensor system $\mathcal{G}\#\mathcal{H}$.}
	\label{fig_small_phase} 
\end{figure}


Given $\mathcal{G},\mathcal{H} \in \mathcal{RH}^{(I_{1} \times \cdots \times I_{N})\times (I_{1} \times \cdots \times I_{N})}_{\infty}$, the feedback interconnection of $\mathcal{G}$ and $\mathcal{H}$ is shown in Fig. \ref{fig_small_phase}. Clearly, the transfer tensor form $(w_1,w_2)$ to $(e_1,e_2)$ is given by the Gang of Four tensor \cite[Chapter 5]{ZDG96}
\begin{align}
   & \mathcal{G}\#\mathcal{H} \nonumber\\
   =&
   \left[
   \begin{smallmatrix}
   \mathcal{I}- \mathcal{G}*_N (\mathcal{I}+\mathcal{H}*_N \mathcal{G})^{-1} *_N \mathcal{H} &   -\mathcal{G}*_N (\mathcal{I}+\mathcal{H}*_N \mathcal{G})^{-1}  \\
   (\mathcal{I}+\mathcal{H}*_N \mathcal{G})^{-1} *_N \mathcal{H}  &  (\mathcal{I}+\mathcal{H}*_N \mathcal{G})^{-1}
  \end{smallmatrix}
  \right]_{1}. \label{eq:feedback}
\end{align}
Here, the subscript ``1'' means $n=1$ in the tensor blocking in Eq. \eqref{eq:blcking}. Therefore, the feedback system is stable if  $\mathcal{G}\#\mathcal{H}\in\mathcal{RH}^{(2I_{1} \times I_{2} \cdots \times I_{N})\times (2I_{1} \times I_{2} \cdots \times I_{N})}_{\infty}$.

\bmrk
In \cite[Figure 1]{wang2025algebraic}, $\mathcal{Y}_2$ is a positive feedback, similar to \cite[Figure 5.2]{ZDG96}, while in  Fig. \ref{fig_small_phase}  negative feedback of $\mathcal{Y}_2$ is adopted. Due to this, we have the term $(\mathcal{I}+\mathcal{H}*_N \mathcal{G})^{-1}$ in Eq. \eqref{eq:feedback}, instead of  $(\mathcal{I}-\mathcal{H}*_N \mathcal{G})^{-1}$ used in \cite{wang2025algebraic}. Such difference makes no difference in the small gain theorem where gains are concerned, but it will make a different in the small phase theorem to be derived, as an additional angle $\pi$ will be introduced.
\emrk 


 By means of the tensor blockings in Definition \ref{Def: n-modeblocktensor}, the closed-loop transfer tensor in Eq. \eqref{eq:feedback} is exact the same form as matrix blocking. In the tensor algebra, there are other types of tensor blockings, such as those in Ref. \cite{MR3479026,ke2016numerical}. Specifically, given two tensors $\mathcal{A}=(a_{i_{1}\cdots i_{N}j_{1}\cdots j_{M}}) \in \mathbb{C}^{(I_{1} \times \cdots \times I_{N})\times (J_{1} \times \cdots \times J_{M})}$ and $\mathcal{B}=(b_{i_{1}\cdots i_{N}k_{1}\cdots k_{M}}) \in \mathbb{C}^{(I_{1} \times \cdots \times I_{N})\times (K_{1} \times \cdots \times K_{M})}$, a row block tensor is denoted by
\begin{equation*}
    \begin{pmatrix}
    \mathcal{A} & \mathcal{B}
    \end{pmatrix}
    \in \mathbb{C}^{(I_{1} \times \cdots \times I_{N})\times (\beta_{1} \times \cdots \times \beta_{M})},
\end{equation*}
    where $\beta_{i}=J_{i}+K_{i}, \forall i \in [M]$. For each $\forall i\in [M]$, denote the set  $\Gamma_{i}=\{J_{i}+1,J_{i}+2,\cdots,J_{i}+K_{i}\}$.  Then the elements in  the block tensor $\begin{pmatrix}
    \mathcal{A} & \mathcal{B}
    \end{pmatrix}$ are
\begin{multline*}
\begin{pmatrix}
    \mathcal{A} & \mathcal{B}
\end{pmatrix}_{i_{1}\cdots i_{N}l_{1}\cdots l_{M}} =\\
\begin{cases}
a_{i_{1}\cdots i_{N}l_{1}\cdots l_{M}}, & 
\begin{aligned}
&\text{if } (i_{1},\cdots, i_{N}) \in [I_{1}] \times \cdots \times [I_{N}], \\
&\quad (l_{1},\cdots, l_{M}) \in [J_{1}] \times \cdots \times [J_{M}],
\end{aligned} \\
b_{i_{1}\cdots i_{N}l_{1}\cdots l_{M}}, & 
\begin{aligned}
&\text{if } (i_{1},\cdots, i_{N}) \in [I_{1}] \times \cdots \times [I_{N}], \\
&\quad (l_{1},\cdots, l_{M}) \in \Gamma_{1} \times \cdots \times \Gamma_{M},
\end{aligned} \\
0, & \text{others.}
\end{cases}
\end{multline*}

Similarly,  given tensors $\mathcal{A} \in \mathbb{C}^{(I_{1} \times \cdots \times I_{N})\times (J_{1} \times \cdots \times J_{M})}$ and $\mathcal{C} \in \mathbb{C}^{(L_{1} \times \cdots \times L_{N})\times (J_{1} \times \cdots \times J_{M})}$, a column block tensor is denoted by
\begin{equation*}
    \begin{pmatrix}
    \mathcal{A} \\
    \mathcal{C}
    \end{pmatrix}
    \in \mathbb{C}^{(\alpha_{1} \times \cdots \times \alpha_{N})\times (J_{1} \times \cdots \times J_{M})},
\end{equation*}
where $\alpha_{i}=I_{i}+L_{i}, \forall i=[N]$. In fact, the column block tensor and the row block tensor have relation
\begin{equation*}
    \begin{pmatrix}
    \mathcal{A} \\
    \mathcal{C}
    \end{pmatrix}=
    \begin{pmatrix}
    \mathcal{A}^{T} & \mathcal{C}^{T}
    \end{pmatrix}^{T}\in \mathbb{C}^{(\alpha_{1} \times \cdots \times \alpha_{N})\times (J_{1} \times \cdots \times J_{M})}
\end{equation*}

Let $\mathcal{D} \in \mathbb{C}^{(L_{1} \times \cdots \times L_{N})\times (K_{1} \times \cdots \times K_{M})}$. By means of the row and column tensor blockings as defined above,  we can form the normal block tensor as  
\begin{equation} \label{eq:blocking_Wei}
    \begin{pmatrix}
    \mathcal{A} & \mathcal{B} \\
    \mathcal{C} & \mathcal{D}
    \end{pmatrix}
    \in \mathbb{C}^{(\alpha_{1} \times \cdots \times \alpha_{N})\times (\beta_{1} \times \cdots \times \beta_{M})}.
\end{equation}

If the tensor blocking \eqref{eq:blocking_Wei} is adopted, the closed-loop tensor transfer will \textbf{not} have the form of Eq. \eqref{eq:feedback} and consequently, many nice results in linear systems theory are not applicable. Thus, it is crucial to choose appropriate tensor blockings.  Indeed,  with the aid of the tensor blocking \eqref{eq:blcking} and the isomorphism \eqref{chen2024phase}, a tensor version of the small gain theorem was recently developed in \cite[Section 4.3]{wang2025algebraic}; specifically, the feedback system $\mathcal{G}\#\mathcal{H}$ is stable if
\begin{equation*}
\|\G\|_\infty \|\mathcal{H}\|_\infty < 1.
\end{equation*}

Using the tensor phase theory  developed above, we could get a tensor version of the small phase theorem,  generalizing the matrix case recently established in \cite{chen2024phase} .

Similar to the matrix case \cite{chen2024phase}, we define frequency-wise sectorial transfer tensors.
\begin{definition}
A transfer tensor \newline $\mathcal{G}\in\mathcal{RH}^{(I_{1} \times \cdots \times I_{N})\times (I_{1} \times \cdots \times I_{N})}_{\infty}$ is said to be frequency-wise sectorial if $\mathcal{G}(\imath \omega)$ is sectorial for all $\omega \in[-\infty,\infty]$.
\end{definition}

\begin{theorem} \label{thm:small_phase_theorem}
    Given frequency-wise sectorial tensors $\mathcal{G},\mathcal{H} \in\mathcal{RH}^{(I_{1} \times \cdots \times I_{N})\times (I_{1} \times \cdots \times I_{N})}_{\infty}$, the feedback system $\mathcal{G}\#\mathcal{H}$ in Fig. \ref{fig_small_phase} is stable if
\begin{equation}\label{spt condition}
    \bar{\Phi}(\mathcal{G}(\imath \omega))+\bar{\Phi}(\mathcal{H}(\imath \omega))<\pi,\quad \underline{\Phi}(\mathcal{G}(\imath \omega))+\underline{\Phi}(\mathcal{H}(\imath \omega))>-\pi
\end{equation}
holds for all $\omega\in [-\infty, \infty]$.
\end{theorem}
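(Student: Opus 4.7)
The plan is to reduce the tensor small phase theorem to its recently established matrix counterpart in \cite{chen2024phase} via the unfolding isomorphism $\phi$ defined in Eq.~\eqref{chen2024phase}, exploiting the structural properties collected in Lemma~\ref{lem:isomorphism} and the block unfolding identity in Lemma~\ref{Lemma: unfolding}.

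First, I would unfold the frequency response. For each $\omega\in[-\infty,\infty]$, set $G(\imath\omega):=\phi(\mathcal{G}(\imath\omega))$ and $H(\imath\omega):=\phi(\mathcal{H}(\imath\omega))$. Using the multiplicativity, Hermitian-preservation, and scalar-linearity of $\phi$, together with the resolvent form of $\mathcal{G}(s)$ in Eq.~\eqref{eq:tf}, the matrices $G(s)$ and $H(s)$ lie in $\mathcal{RH}_\infty^{|\mathbf{I}|\times|\mathbf{I}|}$ and their poles coincide with those of $\mathcal{G},\mathcal{H}$. By property~(5) of Lemma~\ref{lem:isomorphism}, $G(\imath\omega)$ and $H(\imath\omega)$ are sectorial matrices, and by property~(6) their extreme phases equal those of the corresponding tensors, so the hypothesis \eqref{spt condition} transfers verbatim to
\begin{equation*}
\bar{\Phi}(G(\imath\omega))+\bar{\Phi}(H(\imath\omega))<\pi,\quad \underline{\Phi}(G(\imath\omega))+\underline{\Phi}(H(\imath\omega))>-\pi,
\end{equation*}
for all $\omega\in[-\infty,\infty]$.

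Next, I would transport the feedback interconnection. Applying $\phi$ to each of the four subblocks in Eq.~\eqref{eq:feedback} and invoking Lemma~\ref{Lemma: unfolding} with $n=1$, there is a fixed permutation matrix $P$ (depending only on the dimensions $I_1,\dots,I_N$) such that
\begin{equation*}
\phi(\mathcal{G}\#\mathcal{H})(s) = P\begin{bmatrix} I-G(\mathcal{I}+HG)^{-1}H & -G(I+HG)^{-1}\\ (I+HG)^{-1}H & (I+HG)^{-1}\end{bmatrix}(s)\,P^{\top}.
\end{equation*}
Because $P$ is a constant permutation (hence stable and with stable inverse), stability of $\mathcal{G}\#\mathcal{H}$ in $\mathcal{RH}_\infty$ is equivalent to stability of the bracketed matrix transfer function, which is precisely the classical Gang-of-Four matrix $G\#H$ associated with the negative-feedback loop of $G$ and $H$.

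Finally, I would invoke the matrix small phase theorem of \cite{chen2024phase}: under the translated phase inequalities above, the matrix feedback system $G\#H$ is internally stable. Pulling this conclusion back through $\phi^{-1}$ and the permutation conjugation yields $\mathcal{G}\#\mathcal{H}\in\mathcal{RH}_\infty^{(2I_1\times I_2\times\cdots\times I_N)\times(2I_1\times I_2\times\cdots\times I_N)}$, completing the proof. The main technical obstacle is verifying cleanly that the tensor blocking used in Eq.~\eqref{eq:feedback} corresponds, after unfolding, exactly to the standard matrix Gang-of-Four form up to a fixed permutation similarity; once Lemma~\ref{Lemma: unfolding} is applied and the isomorphism properties of $\phi$ are invoked termwise on $(\mathcal{I}+\mathcal{H}*_N\mathcal{G})^{-1}$, the reduction to the matrix theorem is mechanical.
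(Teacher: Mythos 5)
Your proposal is correct and follows essentially the same route as the paper: both arguments reduce the statement to the matrix small phase theorem of \cite{chen2024phase} by transferring the frequency-wise phase conditions through the unfolding isomorphism $\phi$ (Lemma \ref{lem:isomorphism}). The only cosmetic difference is that the paper first reduces closed-loop stability to the determinant condition $\det[\mathcal{I}+\mathcal{G}(s)*_{N}\mathcal{H}(s)]\neq 0$ on the closed right-half plane and applies the matrix theorem in that form, whereas you unfold the entire Gang-of-Four block tensor via Lemma \ref{Lemma: unfolding} and invoke the matrix theorem's stability conclusion directly before pulling it back through the permutation similarity.
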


\begin{proof}
    As $\mathcal{G},\mathcal{H} \in\mathcal{RH}^{(I_{1} \times \cdots \times I_{N})\times (I_{1} \times \cdots \times I_{N})}_{\infty}$,  by Eq. \eqref{eq:feedback}, the feedback system $\mathcal{G}\#\mathcal{H}$ is stable if and only if  $(\mathcal{I}+\mathcal{H}*_{N}\mathcal{G})^{-1}\in \mathcal{RH}^{(I_{1} \times \cdots \times I_{N})\times (I_{1} \times \cdots \times I_{N})}_{\infty}$. Hence, as given in the proof of \cite[Lemma 4.8]{wang2025algebraic},  it suffices to show that $det[\mathcal{I}+\mathcal{G}(s)*_{N}\mathcal{H}(s)]\neq 0$ for all $s\in\mathbb{C}^{+}\cup \{\infty\}$, where $\mathbb{C}^{+}$ denotes the closed right-half plane.  Let $\phi$ be the isomorphic mapping of the unfolding process defined in Eq. \eqref{chen2024phase}. Then by Lemma \ref{lem:isomorphism} we have  that $$\bar{\Phi}(\mathcal{G}(\imath \omega))=\bar{\Phi}(\phi(\mathcal{G})(\imath \omega)), ~  \underline{\Phi}(\mathcal{G}(\imath \omega))=\underline{\Phi}(\phi(\mathcal{G})(\imath \omega)),$$ 
    $$\bar{\Phi}(\mathcal{H}(\imath \omega))=\bar{\Phi}(\phi(\mathcal{H})(\imath \omega)), ~ \underline{\Phi}(\mathcal{H}(\imath \omega))=\underline{\Phi}(\phi(\mathcal{H})(\imath \omega)).$$ 
    Therefore, by Eq. \eqref{spt condition} we have that
    \begin{equation*}
        \bar{\Phi}((\phi(\mathcal{G})(\imath \omega))+\bar{\Phi}((\phi(\mathcal{H})(\imath \omega))<\pi, 
    \end{equation*}
    \begin{equation*}
        \underline{\Phi}((\phi(\mathcal{G})(\imath \omega))+\underline{\Phi}((\phi(\mathcal{H})(\imath \omega))>-\pi
    \end{equation*}
    holds for all $\omega\in\mathbb{R}$. Then by \cite[Thm. 4.1]{chen2024phase},  
$det[I+\phi(\mathcal{H})(s)\phi(\mathcal{G})(s)]\neq 0$ for all $s\in\mathbb{C}^{+}\cup \{\infty\}$. As $\phi$ is an isomorphism,  it preserves invertibility. Thus $det[\mathcal{I}-\mathcal{H}(s)*_{N}\mathcal{G}(s)]\neq 0$ for all $s\in\mathbb{C}^{+}\cup \{\infty\}$.  
The proof is completed. 
\end{proof}

\subsection{Quasi-sectorial and semi-sectorial tensors} \label{sec:quasi-semi}


In this subsection, we study  quasi-sectorial and semi-sectorial tensors. Due to the multi-dimensional nature of tensors, quasi-sectorial and semi-sectorial tensors exhibit delicate features compared with their matrix counterparts.
    
\begin{definition}
    An even-order square tensor $\mathcal{A}$ is   quasi-sectorial  if its field angle $\delta(\mathcal{A})< \pi$, and is  semi-sectorial if $\delta(\mathcal{A})\leq \pi$.
\end{definition}

It can easily verified  that $\mathcal{A}$ is a quasi-sectorial tensor if and only if $\phi(\mathcal{A})$ is a quasi-sectorial matrix and $\mathcal{A}$ is a semi-sectorial tensor if and only if $\phi(\mathcal{A})$ is a semi-sectorial matrix.   As a result,  similar to  the tensor  decomposition Theorem \ref{th2_3}  for sectorial tensors, quasi-sectorial tensors have a tensor decomposition theorem as given below.

\begin{theorem}\label{quasisectorial}
Suppose $\mathcal{A}\in \mathbb{C}^{(I_{1} \times \cdots \times I_{N})\times (I_{1} \times \cdots \times I_{N})}$ is a quasi-sectorial tensor.  If there exists  an index $n \in \{1, \dots, N\}$ and a positive integer $J_n < I_n$ such that  $rank(\mathcal{A}) \leq \frac{J_n}{I_n} |\textbf{I}|$, then $\mathcal{A}$ has a decomposition
    \begin{equation} \label{eq:quasi_decomp}
        \mathcal{A}=\mathcal{U}*_{N} \begin{bmatrix}
    \mathcal{O} & \mathcal{O} \\
    \mathcal{O} & \mathcal{A}_{s}
    \end{bmatrix}_{n}*_{N}
    \mathcal{U}^{H},
    \end{equation}
    where $\mathcal{U}$ is a unitary tensor and the tensor $\mathcal{A}_{s} \in \mathbb{C}^{(I_{1} \times\cdots\times J_{n}\times \cdots \times I_{N})\times (I_{1} \times\cdots\times J_{n}\times \cdots \times I_{N})}$ is  quasi-sectorial.
\end{theorem}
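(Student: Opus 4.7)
The plan is to lift the matrix quasi-sectorial decomposition to the tensor setting through the unfolding isomorphism $\phi$ of Eq.~\eqref{chen2024phase}, and to handle the $n$-mode block structure via the permutation identity of Lemma~\ref{Lemma: unfolding}. The paragraph preceding the theorem already records that $\mathcal{A}$ is quasi-sectorial if and only if $\phi(\mathcal{A})$ is a quasi-sectorial matrix, and by applying Lemma~\ref{lem:isomorphism} to the eigenvalues of $\mathcal{A}^{H}*_{N}\mathcal{A}$ one obtains $\mathrm{rank}(\mathcal{A})=\mathrm{rank}(\phi(\mathcal{A}))$. Setting $r:=\frac{J_n}{I_n}|\textbf{I}|$, the hypothesis translates to the matrix statement: $\phi(\mathcal{A})\in\mathbb{C}^{|\textbf{I}|\times|\textbf{I}|}$ is a quasi-sectorial matrix of rank at most $r$.

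The first main step would be to invoke the matrix version of the decomposition (a standard matrix-phase-theory result; cf.~\cite{r3}) to obtain a unitary matrix $V$ and a quasi-sectorial matrix $M_s\in\mathbb{C}^{r\times r}$ with
\begin{equation*}
\phi(\mathcal{A}) = V\begin{bmatrix} 0 & 0 \\ 0 & M_s \end{bmatrix} V^{H},
\end{equation*}
where the top-left zero block has size $|\textbf{I}|-r$. If $\mathrm{rank}(\phi(\mathcal{A}))<r$ strictly, $M_s$ is padded with additional zero eigenvalues so that its size is exactly $r\times r$; this padding preserves quasi-sectoriality because adjoining the origin does not enlarge the supporting cone of the numerical range from the origin. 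Define $\mathcal{A}_s:=\phi^{-1}(M_s)$, so that $\mathcal{A}_s\in\mathbb{C}^{(I_1\times\cdots\times J_n\times\cdots\times I_N)\times(I_1\times\cdots\times J_n\times\cdots\times I_N)}$ is quasi-sectorial. Lemma~\ref{Lemma: unfolding} then provides a permutation matrix $P$ (depending only on $n$ and the mode sizes) with
\begin{equation*}
\phi\!\left(\begin{bmatrix} \mathcal{O} & \mathcal{O} \\ \mathcal{O} & \mathcal{A}_s \end{bmatrix}_n\right) = P\begin{bmatrix} 0 & 0 \\ 0 & M_s \end{bmatrix} P^{T}.
\end{equation*}
Taking $\mathcal{U}:=\phi^{-1}(V P^{T})$ produces a tensor unitary by parts (1)--(2) of Lemma~\ref{lem:isomorphism}, and chaining the two displayed identities and applying $\phi^{-1}$ yields the desired decomposition~\eqref{eq:quasi_decomp}.

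The main obstacle is bookkeeping rather than substance: one must verify that the block sizes produced by the matrix decomposition match exactly $r\times r$ when $\mathrm{rank}(\phi(\mathcal{A}))<r$ (handled by the zero-padding argument, which preserves quasi-sectoriality) and that the $n$-mode tensor blocking unfolds to a permuted---rather than a standard---matrix block form (absorbed into the definition of $\mathcal{U}$ via $P$). Because $\phi$ is an isomorphism respecting the Einstein product, the conjugate transpose, and unitarity, every matrix-level identity transfers directly to the tensor side, so no additional tensor-level analysis is required beyond invoking the matrix prerequisite.
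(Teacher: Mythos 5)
Your proposal is correct and follows essentially the same route as the paper's proof: unfold via $\phi$, apply the matrix quasi-sectorial decomposition, zero-pad the sectorial core so that its size is exactly $\frac{J_n}{I_n}|\textbf{I}|$ (making $\phi^{-1}$ applicable and yielding a quasi-sectorial rather than sectorial $\mathcal{A}_s$), and absorb the blocking permutation from Lemma~\ref{Lemma: unfolding} into the unitary $\mathcal{U}=\phi^{-1}(UP^{\top})$. No substantive differences.
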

\begin{proof}
By definition, $\phi(\mathcal{A})$ is a quasi-sectorial matrix and therefore admits the decomposition \cite{chen2024phase}
    \begin{equation*}
        \phi(\mathcal{A})=U\begin{bmatrix}
    O & O \\
    O & A_{t}
    \end{bmatrix} U^{H},
    \end{equation*}
where $U$ is a unitary matrix and $A_t$ is a sectorial matrix. It should be noted that by definition of the isomorphism $\phi$,  we may not be able to apply $\phi^{-1}$ to the matrix $A_t$  due to mismatch of dimensions. Fortunately, by grouping some zero blocks with $A_t$   we can always obtain a bigger matrix $A_{s}=\begin{bmatrix}
    O & O \\
    O & A_{t}
    \end{bmatrix}\in\mathbb{C}^{(\frac{J_n}{I_n} |\textbf{I}|)\times(\frac{J_n}{I_n} |\textbf{I}|)}$ for which $\phi^{-1}(A_s)$ is well defined. By doing this we have, 
       \begin{equation*}
        \begin{bmatrix}
    O & O \\
    O & A_{t}
    \end{bmatrix}=\begin{bmatrix}
    O & O \\
    O & A_{s}
    \end{bmatrix}.
    \end{equation*} 
    (It should be noted that  the zeros in in the LHS matrix may have different blockings with those in the RHS matrix.) 
    Clearly, $A_s$ is a quasi-sectorial matrix. According to Lemma \ref{Lemma: unfolding}, there exists a permutation matrix $P$ such that
\begin{equation*}
    \phi\left(\begin{bmatrix}
    \mathcal{O} & \mathcal{O} \\
    \mathcal{O} & \phi^{-1}(A_{s})
    \end{bmatrix}_{n}\right)=P\begin{bmatrix}
    O & O \\
    O & A_{s}
    \end{bmatrix}P^{\top},
\end{equation*}
That is to say
\begin{eqnarray*}
    \phi(\mathcal{A})&=&U\left(P^{\top}\phi\left(\begin{bmatrix}
    \mathcal{O} & \mathcal{O} \\
    \mathcal{O} & \phi^{-1}(A_{s})
    \end{bmatrix}_{n}\right) P\right)U^{H}\\
    &=&(UP^{\top})\phi\left(\begin{bmatrix}
    \mathcal{O} & \mathcal{O} \\
    \mathcal{O} & \phi^{-1}(A_{s})
    \end{bmatrix}_{n}\right) (UP^{\top})^{H}.
\end{eqnarray*}
Setting $\mathcal{U} = \phi^{-1}(UP^{\top})$ and $\mathcal{A}_s = \phi^{-1}(A_s)$ completes the proof.
\end{proof}
\begin{remark}
If $rank(\mathcal{A}) = \frac{J_n}{I_n} |\textbf{I}|$, then the tensor $\mathcal{A}_{s}$ in Theorem \ref{quasisectorial} is sectorial. But in general, the tensor $\A_s$ is Eq. \eqref{eq:quasi_decomp} is quasi-sectorial, instead of sectorial. This is different from the matrix case in \cite{r5}.
\end{remark}

\begin{example}
In specific situations, the selection of the index  $n$ is important to realize the tensor decomposition in Theorem \ref{quasisectorial} for  quasi-sectorial tensors. 
    Consider a tensor $\mathcal{A}\in\mathbb{C}^{(3\times 2)\times (3\times 2)}$ with
    $$\mathcal{A}(1,1,:,:)=\mathcal{A}(1,2,:,:)=\begin{pmatrix}
        0&0&0\\
        0&0&0
    \end{pmatrix},
    $$
    $$\mathcal{A}(1,3,:,:)=\begin{pmatrix}
        0&0&e^{\imath\theta_{1}}\\
        e^{\imath\theta_{1}}&e^{\imath\theta_{1}}&0
    \end{pmatrix},$$
    $$ 
    \mathcal{A}(2,1,:,:)=\begin{pmatrix}
        0&0&e^{\imath\theta_{1}}\\
        e^{\imath\theta_{1}}+e^{\imath\theta_{2}}&e^{\imath\theta_{1}}&e^{\imath\theta_{2}}
    \end{pmatrix},
    $$
    $$\mathcal{A}(2,2,:,:)=\begin{pmatrix}
        0&0&e^{\imath\theta_{1}}\\
        e^{\imath\theta_{1}}&e^{\imath\theta_{1}}+e^{\imath\theta_{3}}&0
    \end{pmatrix},$$
    $$\mathcal{A}(2,3,:,:)=\begin{pmatrix}
        0&0&0\\
        e^{\imath\theta_{2}}&0&e^{\imath\theta_{4}}+e^{\imath\theta_{2}}
    \end{pmatrix},
    $$
    where $\theta_{1},\theta_{2},\theta_{3},\theta_{4}\in(0,\pi)$. It is easy to see $rank(\mathcal{A})=4$.
    Define $\mathcal{A}_{s}\in\mathbb{C}^{(2\times 2)\times (2\times 2)}$ with
    $$\mathcal{A}_{s}(1,1,:,:)=\begin{pmatrix}
        e^{\imath\theta_{1}}&e^{\imath\theta_{1}}\\
        e^{\imath\theta_{1}}&0
    \end{pmatrix},$$
    $$ 
    \mathcal{A}_{s}(1,2,:,:)=\begin{pmatrix}
        e^{\imath\theta_{1}}&e^{\imath\theta_{1}}+e^{\imath\theta_{2}}\\
        e^{\imath\theta_{1}}&e^{\imath\theta_{2}}
    \end{pmatrix},
    $$
    $$\mathcal{A}_{s}(2,1,:,:)=\begin{pmatrix}
        e^{\imath\theta_{1}}&e^{\imath\theta_{1}}\\
        e^{\imath\theta_{1}}+e^{\imath\theta_{3}}&0
    \end{pmatrix},$$
    $$\mathcal{A}_{s}(2,2,:,:)=\begin{pmatrix}
        0&e^{\imath\theta_{2}}\\
        0&e^{\imath\theta_{4}}+e^{\imath\theta_{2}}
    \end{pmatrix}.
    $$
From Example \ref{calphase}, we know that $\mathcal{A}_{s}$ is a sectorial tensor. Choose a unitary tensor $\mathcal{U}\in\mathbb{C}^{(3\times 2)\times (3\times 2)}$ with
    $$\mathcal{U}(1,1,:,:)=\begin{pmatrix}
        1&0&0\\
        0&0&0
    \end{pmatrix}, ~~  \mathcal{U}(1,2,:,:)=\begin{pmatrix}
        0&0&1\\
        0&0&0
    \end{pmatrix},
    $$
    $$\mathcal{U}(1,3,:,:)=\begin{pmatrix}
        0&0&0\\
        1&0&0
    \end{pmatrix}, ~~
     \mathcal{U}(2,1,:,:)=\begin{pmatrix}
        0&1&0\\
        0&0&0
    \end{pmatrix},
    $$
    $$\mathcal{U}(2,2,:,:)=\begin{pmatrix}
        0&0&0\\
        0&1&0
    \end{pmatrix}, ~~
    \mathcal{U}(2,3,:,:)=\begin{pmatrix}
        0&0&0\\
        0&0&1
    \end{pmatrix}.
    $$
    We find that
    \begin{equation*}
        \mathcal{A}=\mathcal{U}*_{N} \begin{bmatrix}
    \mathcal{O} & \mathcal{O} \\
    \mathcal{O} & \mathcal{A}_{s}
    \end{bmatrix}_{1}*_{N}
    \mathcal{U}^{H}.
    \end{equation*}
    This leads to the decomposition in Theorem \ref{quasisectorial}. But for the second index $n=2$, there is no such decomposition.
\end{example}

\begin{theorem}\label{matrix_quasi}
Suppose $\mathcal{A}\in \mathbb{C}^{(I_{1} \times \cdots \times I_{N})\times (I_{1} \times \cdots \times I_{N})}$ is a quasi-sectorial tensor.  Then $\mathcal{A}$ has a decomposition
    \begin{equation}
        \mathcal{A}=\mathcal{U}*_{N}  \mathcal{C}_{s}
    *_{N}
    \mathcal{U}^{H},
    \end{equation}
    where $\mathcal{U}$ is a unitary tensor, $\phi(\mathcal{C}_{s})=\begin{bmatrix}
    O & O \\
    O & C_{s}
    \end{bmatrix}$ and $C_{s}$ is a sectorial matrix.
\end{theorem}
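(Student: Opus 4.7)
The plan is to leverage the isomorphism $\phi$ from Eq. \eqref{chen2024phase} and reduce the tensor decomposition to the analogous matrix decomposition from \cite{chen2024phase}. Because $\mathcal{A}$ is a quasi-sectorial tensor, the equivalence noted just before Theorem \ref{quasisectorial} gives that $\phi(\mathcal{A})\in\mathbb{C}^{|\textbf{I}|\times |\textbf{I}|}$ is a quasi-sectorial matrix, so the matrix version produces a unitary matrix $U$ and a sectorial matrix $C_{s}\in\mathbb{C}^{r\times r}$ with $r=\mathrm{rank}(\phi(\mathcal{A}))$ such that
\begin{equation*}
\phi(\mathcal{A})=U\begin{bmatrix} O & O \\ O & C_{s}\end{bmatrix} U^{H},
\end{equation*}
where the upper-left zero block has size $(|\textbf{I}|-r)\times(|\textbf{I}|-r)$. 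This is exactly the decomposition already invoked in the proof of Theorem \ref{quasisectorial}.

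Next I would use the bijectivity of $\phi$ between $\mathbb{C}^{(I_{1}\times\cdots\times I_{N})\times(I_{1}\times\cdots\times I_{N})}$ and $\mathbb{C}^{|\textbf{I}|\times|\textbf{I}|}$ to define
\begin{equation*}
\mathcal{U}=\phi^{-1}(U),\qquad \mathcal{C}_{s}=\phi^{-1}\!\left(\begin{bmatrix} O & O \\ O & C_{s}\end{bmatrix}\right),
\end{equation*}
noting that no dimensional compatibility is required since the total size of the matrix matches $|\textbf{I}|$ on both sides. By construction $\phi(\mathcal{C}_{s})=\left[\begin{smallmatrix} O & O \\ O & C_{s}\end{smallmatrix}\right]$, which is one of the two conclusions to be proved. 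Then I would apply items (1) and (2) of Lemma \ref{lem:isomorphism} to obtain
\begin{equation*}
\phi(\mathcal{U}*_{N}\mathcal{C}_{s}*_{N}\mathcal{U}^{H})=U\begin{bmatrix} O & O \\ O & C_{s}\end{bmatrix} U^{H}=\phi(\mathcal{A}),
\end{equation*}
and invoke injectivity of $\phi$ to conclude $\mathcal{A}=\mathcal{U}*_{N}\mathcal{C}_{s}*_{N}\mathcal{U}^{H}$. Unitarity of $\mathcal{U}$ follows from the same two items, since $\phi(\mathcal{U}^{H}*_{N}\mathcal{U})=U^{H}U=I=\phi(\mathcal{I})$ and $\phi$ is injective.

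The main obstacle sidestepped here, compared with Theorem \ref{quasisectorial}, is the need for a rank hypothesis compatible with some partition $J_{n}<I_{n}$ of one of the tensor dimensions. In the earlier result one had to refold the block matrix into a genuine mode-$n$ block tensor $\left[\begin{smallmatrix}\mathcal{O} & \mathcal{O}\\ \mathcal{O} & \mathcal{A}_{s}\end{smallmatrix}\right]_{n}$ via Lemma \ref{Lemma: unfolding}, which through the perfect shuffle permutations forces the zero and sectorial blocks to have sizes matching $\frac{J_{n}}{I_{n}}|\textbf{I}|$. Theorem \ref{matrix_quasi} weakens the conclusion precisely to avoid this compatibility: it only asks that the \emph{unfolding} $\phi(\mathcal{C}_{s})$ be block diagonal, which is obtained for free from the matrix decomposition without invoking Lemma \ref{Lemma: unfolding}. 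Consequently no real analytic difficulty remains, and the essential work is simply the bookkeeping of how $\phi^{-1}$ interacts with the Einstein product and the Hermitian conjugate.
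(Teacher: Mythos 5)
Your proof is correct and follows essentially the same route as the paper: unfold $\mathcal{A}$ via $\phi$, apply the quasi-sectorial matrix decomposition from \cite{chen2024phase}, and pull back through $\phi^{-1}$ using the homomorphism properties of Lemma \ref{lem:isomorphism}. Your additional remarks verifying unitarity of $\mathcal{U}$ and explaining why no rank/refolding hypothesis is needed (unlike in Theorem \ref{quasisectorial}) are accurate elaborations of steps the paper leaves implicit.
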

\begin{proof}
    For a quasi-sectorial matrix $\phi(\mathcal{A})$, it admits the decomposition \cite{chen2024phase}
    \begin{equation}\label{eq:nov28}
        \phi(\mathcal{A})=U\begin{bmatrix}
    O & O \\
    O & C_{s}
    \end{bmatrix} U^{H},
    \end{equation}
where $U$ is a unitary matrix and $C_s$ is a sectorial matrix. Define $\mathcal{U}=\phi^{-1}(U)$, and applying the inverse mapping $\phi^{-1}$ to both sides of this equality yields the desired result.
\end{proof}

As shown in Theorem \ref{matrix_quasi}, in general, a quasi-sectorial tensor has many zero eigenvalues. Thus, similar to the matrix case \cite{chen2024phase}, we define the phases of the quasi-sectorial $\A$ as the phases of the sectorial matrix $C_{s}$ in Eq. \eqref{eq:nov28}.


The semi-sectorial tensors also have  tensor decompositions.
\begin{theorem}
Suppose $\mathcal{A}\in \mathbb{C}^{(I_{1} \times \cdots \times I_{N})\times (I_{1} \times \cdots \times I_{N})}$ is a semi-sectorial tensor.  If there exists  an index $n \in \{1, \dots, N\}$ and a positive integer $J_n < I_n$ such that  $rank(\mathcal{A}) \leq \frac{J_n}{I_n} |\textbf{I}|$, then $\mathcal{A}$ has a decomposition
    \begin{equation}
        \mathcal{A}=\mathcal{U}*_{N} \begin{bmatrix}
    \mathcal{O} & \mathcal{O} \\
    \mathcal{O} & \mathcal{A}_{s}
    \end{bmatrix}_{n}*_{N}
    \mathcal{U}^{H},
    \end{equation}
    where $\mathcal{U}$ is a unitary tensor and \newline  $\mathcal{A}_{s} \in \mathbb{C}^{(I_{1} \times\cdots\times J_{n}\times \cdots \times I_{N})\times (I_{1} \times\cdots\times J_{n}\times \cdots \times I_{N})}$ is a tensor with smaller dimensions.
\end{theorem}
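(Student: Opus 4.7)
The plan is to mimic the proof of Theorem \ref{quasisectorial}, exploiting the matrix-tensor isomorphism $\phi$ defined in Eq. \eqref{chen2024phase}. As observed at the start of this subsection, $\mathcal{A}$ is semi-sectorial if and only if $\phi(\mathcal{A})$ is a semi-sectorial matrix. Invoking the matrix decomposition for semi-sectorial matrices from \cite{chen2024phase}, there exists a unitary matrix $U$ such that
$$\phi(\mathcal{A}) = U \begin{bmatrix} O & O \\ O & A_t \end{bmatrix} U^H,$$
where $A_t$ is a square matrix whose size is $rank(\mathcal{A})$. Unlike the quasi-sectorial case, $A_t$ need not be sectorial, only semi-sectorial; but the theorem only asks for a tensor $\mathcal{A}_s$ of smaller dimensions, so no further structural claim is required.

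Next, I would use the hypothesis $rank(\mathcal{A}) \leq \frac{J_n}{I_n}|\textbf{I}|$ to absorb additional zero rows and columns into $A_t$, producing a square matrix $A_s$ of dimension $\frac{J_n}{I_n}|\textbf{I}|$ for which
$$\begin{bmatrix} O & O \\ O & A_t \end{bmatrix} = \begin{bmatrix} O & O \\ O & A_s \end{bmatrix}$$
(with possibly different zero-block sizes on the two sides). This padding is exactly the device used in Theorem \ref{quasisectorial}, and it is arranged so that $\phi^{-1}(A_s)$ is well-defined on the space $\mathbb{C}^{(I_1 \times \cdots \times J_n \times \cdots \times I_N)\times(I_1 \times \cdots \times J_n \times \cdots \times I_N)}$. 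Setting $\mathcal{A}_s = \phi^{-1}(A_s)$ then produces a tensor of the desired smaller dimensions.

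Finally, I would invoke Lemma \ref{Lemma: unfolding} to obtain a permutation matrix $P$ satisfying
$$\phi\!\left(\begin{bmatrix} \mathcal{O} & \mathcal{O} \\ \mathcal{O} & \mathcal{A}_s \end{bmatrix}_{n}\right) = P \begin{bmatrix} O & O \\ O & A_s \end{bmatrix} P^{\top}.$$
Substituting this back into the matrix decomposition of $\phi(\mathcal{A})$ yields
$$\phi(\mathcal{A}) = (UP^{\top})\,\phi\!\left(\begin{bmatrix} \mathcal{O} & \mathcal{O} \\ \mathcal{O} & \mathcal{A}_s \end{bmatrix}_{n}\right)(UP^{\top})^{H}.$$
Putting $\mathcal{U} = \phi^{-1}(UP^{\top})$, which is unitary because $\phi$ respects products and conjugate transposes by Lemma \ref{lem:isomorphism}, and applying $\phi^{-1}$ to both sides furnishes the claimed decomposition.

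The main obstacle, as in Theorem \ref{quasisectorial}, is the dimension bookkeeping: one must choose the mode $n$ and the zero-padding so that $A_s$ lies in an unfolding space that is the $\phi$-image of a tensor with the prescribed dimensions $(I_1 \times \cdots \times J_n \times \cdots \times I_N)^2$, and the permutation from Lemma \ref{Lemma: unfolding} must be compatible with this choice. In contrast to the quasi-sectorial setting, there is no further need to track how the field angle behaves under the reduction, since the conclusion about $\mathcal{A}_s$ is purely dimensional.
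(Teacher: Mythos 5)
Your proposal is correct and follows essentially the same route as the paper: the paper omits the proof, stating only that it is ``similar to Theorem \ref{quasisectorial},'' and your argument is precisely that adaptation --- unfold via $\phi$, apply the semi-sectorial matrix decomposition of \cite{chen2024phase}, zero-pad $A_t$ to a block of size $\frac{J_n}{I_n}|\textbf{I}|$ so that $\phi^{-1}$ is well-defined, and conjugate by the permutation from Lemma \ref{Lemma: unfolding}. You also correctly identify the one substantive difference from the quasi-sectorial case, namely that no sectorialness of $\mathcal{A}_s$ needs to be tracked since the conclusion is purely dimensional.
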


The proof is similar to Theorem \ref{quasisectorial}, and thus is omitted.

The next theorem gives us a useful characterization of quasi-sectorial tensors.
\begin{theorem}
    Suppose $\mathcal{A}\in \mathbb{C}^{(I_{1} \times \cdots \times I_{N})\times (I_{1} \times \cdots \times I_{N})}$ is a quasi-sectorial tensor, $\alpha \in (-\pi,\pi]$. Then $\mathcal{A}$ have phases in $(-\frac{\pi}{2}+\alpha, \frac{\pi}{2}+\alpha)$ if and only if there exists $\epsilon > 0$, such that
    \begin{equation}
        e^{-\imath\alpha}\mathcal{A}+e^{\imath\alpha}\mathcal{A}^{H} \geq \epsilon \mathcal{A}^{H}*_{N} \mathcal{A}.
    \end{equation}
\end{theorem}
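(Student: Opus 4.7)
My plan is to reduce the tensor statement to the corresponding matrix statement via the unfolding isomorphism $\phi$ introduced in Eq. \eqref{chen2024phase}, and then to invoke the matrix-level characterization of quasi-sectorial matrices from Ref. \cite{chen2024phase}. The isomorphism $\phi$ was shown in Lemma \ref{lem:isomorphism} to preserve the Einstein product, the conjugate transpose, scalar multiples, and (as already used in the proof of Theorem \ref{thm:small_phase_theorem}) the upper and lower phases of sectorial/quasi-sectorial objects. Consequently, $\mathcal{A}$ being quasi-sectorial is equivalent to $\phi(\mathcal{A})$ being quasi-sectorial, and $\mathcal{A}$ having all its phases in $(-\tfrac{\pi}{2}+\alpha,\tfrac{\pi}{2}+\alpha)$ is equivalent to the same for $\phi(\mathcal{A})$.

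Next I would translate the tensor inequality into a matrix inequality. Applying $\phi$ to both sides of
\begin{equation*}
e^{-\imath\alpha}\mathcal{A}+e^{\imath\alpha}\mathcal{A}^{H} \;\geq\; \epsilon\,\mathcal{A}^{H}*_{N}\mathcal{A},
\end{equation*}
and using properties (1)--(3) of Lemma \ref{lem:isomorphism}, I obtain
\begin{equation*}
e^{-\imath\alpha}\phi(\mathcal{A})+e^{\imath\alpha}\phi(\mathcal{A})^{H} \;\geq\; \epsilon\,\phi(\mathcal{A})^{H}\phi(\mathcal{A}).
\end{equation*}
The small bridging step I need is that the partial order induced by positive semidefiniteness is preserved by $\phi$; this follows because for Hermitian $\mathcal{X}$ the condition $\mathcal{X}\geq 0$ is equivalent to $\langle \mathcal{X}*_{N}\mathcal{Z},\mathcal{Z}\rangle\geq 0$ for all $\mathcal{Z}$, and under $\phi$ this reads $\phi(\mathcal{Z})^{H}\phi(\mathcal{X})\phi(\mathcal{Z})\geq 0$ for all vectors $\phi(\mathcal{Z})$, hence is equivalent to $\phi(\mathcal{X})\geq 0$ as a matrix. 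The isomorphism $\phi$ is therefore an order-isomorphism between the cones of Hermitian positive semidefinite tensors and matrices.

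With these two translations in hand, the theorem is reduced to the corresponding matrix statement: a quasi-sectorial matrix $A$ has all nonzero phases inside $(-\tfrac{\pi}{2}+\alpha,\tfrac{\pi}{2}+\alpha)$ if and only if there exists $\epsilon>0$ such that $e^{-\imath\alpha}A+e^{\imath\alpha}A^{H}\geq \epsilon A^{H}A$. This is established in \cite{chen2024phase}, so applying it to $A=\phi(\mathcal{A})$ and then pulling back via $\phi^{-1}$ closes the proof. For the necessity direction one can alternatively argue directly from the decomposition of Theorem \ref{matrix_quasi} by writing $\phi(\mathcal{A})=U\,\mathrm{diag}(0,C_{s})\,U^{H}$ and reducing the inequality to the sectorial block $C_{s}$, where the phase condition manifests as the Hermitian part of $e^{-\imath\alpha}C_{s}$ being strictly positive on the range, whence an $\epsilon>0$ can be extracted by compactness of the unit sphere.

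The only real obstacle is the usual one with quasi-sectorial objects: the kernel of $\mathcal{A}$ makes both $e^{-\imath\alpha}\mathcal{A}+e^{\imath\alpha}\mathcal{A}^{H}$ and $\mathcal{A}^{H}*_{N}\mathcal{A}$ singular on the same subspace, so one has to ensure that the inequality is meaningful (and genuinely witnesses the phase bound) only on the complement of $\ker\mathcal{A}$. This is handled cleanly in the matrix proof of \cite{chen2024phase} by restricting to the range of $A^{H}A$ and using a Schur-complement-type argument; after the reduction via $\phi$, no additional multilinear machinery is needed, so the proof is essentially one line once the order-preservation of $\phi$ is recorded.
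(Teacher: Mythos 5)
Your proposal is correct and follows essentially the same route as the paper: both reduce the statement to the matrix case via the unfolding isomorphism $\phi$ and invoke the corresponding quasi-sectorial matrix characterization from \cite{chen2024phase} (the paper cites its Lemma 2.1), then pull back via $\phi^{-1}$. Your explicit verification that $\phi$ preserves the positive-semidefinite order is a small but worthwhile addition that the paper's own proof leaves implicit.
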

\begin{proof}
Let $\phi$ denote the isomorphic mapping defined in Eq. \eqref{chen2024phase}. Denote $x = \phi(\mathcal{X})$ and $A = \phi(\mathcal{A})$. By definition, $A$ is a quasi-sectorial matrix. According to \cite[Lemma 2.1]{chen2024phase}, the inequality \begin{equation*}
        x^{H} (e^{-\imath\alpha}A+e^{\imath\alpha}A^{H})x \geq \epsilon x^{H} (A^{H}A)x
    \end{equation*}
holds for all $x \in \mathbb{C}^{|\mathbf{I}|}$. Applying the inverse mapping $\phi^{-1}$ to both sides of this inequality yields the desired result and completes the proof.
\end{proof}

Let $h \in \mathcal{RH}_{\infty}$ be a scalar transfer function with its inverse $h^{-1} \in \mathcal{RH}_{\infty}$. For a real rational proper stable transfer tensor $\mathcal{H}\in\mathcal{RH}^{(I_{1} \times \cdots \times I_{N})\times (I_{1} \times \cdots \times I_{N})}_{\infty}$, define a cone
\begin{multline*}
    \mathcal{C}(h) = \left\{ \mathcal{H} : 
    \bar{\Phi}(\mathcal{H}(\imath \omega)) \leq \frac{\pi}{2} + \angle h(\imath \omega), \right. \\
    \left. \underline{\Phi}(\mathcal{H}(\imath \omega)) \geq -\frac{\pi}{2} + \angle h(\imath \omega), \ 
    \forall \omega \in [0,\infty] \right\}.
\end{multline*}

Based on the properties of  quasi-sectorial tensors discussed above, the following small phase theorem gives a necessary and sufficient condition for the stability of the feedback system $\mathcal{G}\#\mathcal{H}$ in Fig. \ref{fig_small_phase}.

\begin{theorem}
    Let $\mathcal{G}\in\mathcal{RH}^{(I_{1} \times \cdots \times I_{N})\times (I_{1} \times \cdots \times I_{N})}_{\infty}$ and $h \in \mathcal{RH}_{\infty}$ be a scalar transfer function with $h^{-1} \in \mathcal{RH}_{\infty}$. Then the feedback system $\mathcal{G}\#\mathcal{H}$ in Fig. \ref{fig_small_phase}  is stable for all $\mathcal{H}\in \mathcal{C}(h)$ if and only if $\mathcal{G}$ is frequency-wise quasi-sectorial and
    \begin{equation*}
        \bar{\Phi}(\mathcal{G}(\imath \omega))\leq \frac{\pi}{2}-\angle h(\imath \omega),\quad  \forall \omega\in[0,\infty].
        \end{equation*}
         \begin{equation*}
         \underline{\Phi}(\mathcal{G}(\imath \omega))\geq -\frac{\pi}{2}-\angle h(\imath \omega),\quad \forall \omega\in[0,\infty].
    \end{equation*}
\end{theorem}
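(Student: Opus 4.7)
My plan is to reduce the statement to the cone version of the matrix small-phase theorem established in Ref.~\cite{chen2024phase} via the unfolding isomorphism $\phi$ of Eq.~\eqref{chen2024phase}. By Lemma~\ref{lem:isomorphism} together with Theorem~\ref{matrix_quasi}, $\phi$ preserves products, conjugate transposes, invertibility, sectorialness and quasi-sectorialness, as well as the extreme phases $\bar{\Phi},\underline{\Phi}$. Applied frequency-wise, this yields the equivalences ``$\mathcal{G}$ is frequency-wise quasi-sectorial with the two stated phase bounds'' $\Longleftrightarrow$ ``$\phi(\mathcal{G})$ is frequency-wise quasi-sectorial with the matrix-level analogues'', and ``$\mathcal{H}\in\mathcal{C}(h)$'' $\Longleftrightarrow$ ``$\phi(\mathcal{H})$ lies in the corresponding matrix cone''. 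Moreover, exactly as in the proof of Theorem~\ref{thm:small_phase_theorem}, stability of the tensor feedback $\mathcal{G}\#\mathcal{H}$ is equivalent to $\det[\mathcal{I}+\mathcal{G}(s)*_{N}\mathcal{H}(s)]\neq 0$ on $\mathbb{C}^{+}\cup\{\infty\}$, which by the multiplicativity of $\phi$ is equivalent to $\det[I+\phi(\mathcal{G}(s))\phi(\mathcal{H}(s))]\neq 0$ on the same set, i.e.\ to stability of the matrix feedback $\phi(\mathcal{G})\#\phi(\mathcal{H})$.

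Given these equivalences, sufficiency follows at once: for any $\mathcal{H}\in\mathcal{C}(h)$, the matrices $\phi(\mathcal{G}),\phi(\mathcal{H})$ verify the hypotheses of the matrix cone small-phase theorem in~\cite{chen2024phase}, so $\phi(\mathcal{G})\#\phi(\mathcal{H})$ is stable and hence so is $\mathcal{G}\#\mathcal{H}$. Necessity I prove by contraposition: if $\mathcal{G}$ fails to be frequency-wise quasi-sectorial at some $\omega_{0}$, or if one of the two phase inequalities is violated at some $\omega_{0}$, then $\phi(\mathcal{G})$ violates the corresponding matrix hypothesis, and the necessity direction of the matrix theorem supplies a destabilizing real-rational $H\in\mathcal{C}(h)$ of the matched matrix size. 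Pulling back via $\phi^{-1}$---which is well defined because $\phi$ is a dimension-matching linear isomorphism, and the destabilizer may be chosen compatible with the unfolding pattern by mimicking the diagonal congruence construction used in the proof of Theorem~\ref{th7_1}---produces $\mathcal{H}:=\phi^{-1}(H)\in\mathcal{C}(h)$ for which $\mathcal{G}\#\mathcal{H}$ is unstable.

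The main obstacle is the boundary case inherent in $\mathcal{C}(h)$: the defining inequalities are non-strict, so Theorem~\ref{thm:small_phase_theorem} does not apply directly, and $\mathcal{G}(\imath\omega)$ is only required to be quasi-sectorial (possibly rank-deficient). To handle this I will use the quasi-sectorial decomposition of Theorems~\ref{quasisectorial} and \ref{matrix_quasi} to split $\mathcal{G}(\imath\omega)$ into a sectorial block on its effective range and a zero kernel block; on the sectorial block one applies Theorem~\ref{thm:small_phase_theorem} with a vanishing margin, while the kernel block contributes nothing that could obstruct invertibility of $\mathcal{I}+\mathcal{G}*_{N}\mathcal{H}$ at any $s\in\mathbb{C}^{+}\cup\{\infty\}$. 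The remaining subtlety, ensuring that the matrix-level destabilizer constructed in the necessity direction lifts to a real-rational tensor of the correct even order, is absorbed by the permutation-similarity relation of Lemma~\ref{Lemma: unfolding}, which certifies that block-diagonal destabilizers respect the tensor unfolding pattern.
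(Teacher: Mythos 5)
Your proposal is correct and follows essentially the same route as the paper: both reduce the statement to the matrix cone small-phase theorem with necessity of Ref.~\cite{chen2024phase} via the unfolding isomorphism $\phi$, using that $\phi$ preserves extreme phases, quasi-sectorialness, and the invertibility of $\mathcal{I}+\mathcal{G}*_{N}\mathcal{H}$. The paper's proof is in fact terser than yours, simply citing \cite[Theorem 4.2]{chen2024phase} after noting the phase-preservation identities; your additional care about pulling the destabilizer back through $\phi^{-1}$ and about the non-strict boundary inequalities fills in details the paper leaves implicit.
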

\begin{proof}
    Let $\phi$ denote the isomorphic mapping defined in Eq. \eqref{chen2024phase}. Then $$\bar{\Phi}(\mathcal{G}(\imath \omega))=\bar{\Phi}(\phi(\mathcal{G}(\imath \omega))),  \ \ \underline{\Phi}(\mathcal{G}(\imath \omega))=\underline{\Phi}(\phi(\mathcal{G}(\imath \omega))).$$ For $\phi(\mathcal{G}(\imath \omega))\in\mathcal{RH}^{(I_{1} \cdots  I_{N})\times (I_{1} \cdots  I_{N})}_{\infty}$, use the small phase theorem with necessity\cite[Theorem 4.2]{chen2024phase}, we finish the proof.
\end{proof}

\section{Conclusions} \label{conclusion}
In this paper, we have studied phase for tensors under the Einstein product. By generalizing the concept of the numerical range from square matrices to even-order square tensors, we defined the phases of a sectorial tensor via a sectorial tensor decomposition. We introduced compression of tensors, and studied the relation between the phases of a sectorial tensor and its compression. We defined compound spectrum and compound numerical ranges for square tensors and studied their properties.  We also investigated phases of product and sum of sectorial tensors and showed that the angles of the eigenvalues of the product of two sectorial tensors are smaller than the sum of their phases. Finally,  we presented small phase theorems for sectorial as well as quasi-sectorial tensors under the Einstein product.  

\begin{ack}                         The authors are grateful to the very helpful discussions with Professor Wei Chen at Peking University.
\end{ack}

\bibliographystyle{abbrv}  

\bibliography{autosam}           



\end{document}